\title{Symplectic embeddings into four-dimensional concave toric domains}
\author{Keon Choi, Daniel Cristofaro-Gardiner\footnote{Partially supported by NSF grant DMS-0838703.},\\ 
David Frenkel\footnote{Partially supported by SNF grant 200020-144432/1.}, Michael Hutchings\footnote{Partially supported by NSF grant DMS-1105820.},\\ and Vinicius G. B. Ramos\footnote{Partially supported by NSF grant DMS-1105820 and a Harvey Fellowship.}}
\date{}
\newcommand{\mc}[1]{{\mathcal #1}}
\numberwithin{equation}{section}
\newtheorem{theorem}{Theorem}[section]
\newtheorem{proposition}[theorem]{Proposition}
\newtheorem{corollary}[theorem]{Corollary}
\newtheorem{lemma}[theorem]{Lemma}
\newtheorem{lemma-definition}[theorem]{Lemma-Definition}
\theoremstyle{definition}
\newtheorem{definition}[theorem]{Definition}
\newtheorem{remark}[theorem]{Remark}
\newtheorem{example}[theorem]{Example}
\newtheorem*{acknowledgments}{Acknowledgments}
\newcommand{\floor}[1]{\left\lfloor #1 \right\rfloor}
\newcommand{\C}{{\mathbb C}}
\newcommand{\Q}{{\mathbb Q}}
\newcommand{\R}{{\mathbb R}}
\newcommand{\Z}{{\mathbb Z}}
\newcommand{\op}{\operatorname}
\newcommand{\Ker}{\op{Ker}}
\newcommand{\bpm}{\begin{pmatrix}}
\newcommand{\epm}{\end{pmatrix}}
\renewcommand{\epsilon}{\varepsilon}
\begin{document}

\setcounter{tocdepth}{2}

\maketitle

\begin{abstract}
ECH capacities give obstructions to symplectically embedding one symplectic four-manifold with boundary into another. We compute the ECH capacities of a large family of symplectic four-manifolds with boundary, called ``concave toric domains''. Examples include the (nondisjoint) union of two ellipsoids in $\R^4$. We use these calculations to find sharp obstructions to certain symplectic embeddings involving concave toric domains. For example: (1) we calculate the Gromov width of every concave toric domain; (2) we show that many inclusions of an ellipsoid into the union of an ellipsoid and a cylinder are ``optimal''; and (3) we find a sharp obstruction to ball packings into certain unions of an ellipsoid and a cylinder.
\end{abstract}

\tableofcontents

\section{Introduction}

\subsection{ECH capacities}

Let $(X,\omega)$ be a symplectic four-manifold, possibly with boundary or corners, noncompact, and/or disconnected. Its ECH capacities are a sequence of real numbers
\begin{equation}
\label{eqn:capacitiesincreasing}
0 = c_0(X,\omega) \le c_1(X,\omega) \le c_2(X,\omega) \le \cdots \le \infty.
\end{equation}
The ECH capacities were introduced in \cite{qech}, see also the exposition in \cite{bn}; we will review the definition in the cases relevant to this paper in \S\ref{sec:capacitiesreview}.

The following are some key properties of ECH capacities:
\begin{description}
\item{(Monotonicity)} If there exists a symplectic embedding $(X,\omega)\to (X',\omega')$, then $c_k(X,\omega)\le c_k(X',\omega')$ for all $k$.
\item{(Conformality)}
If $r>0$ then
\[
c_k(X,r\omega)=rc_k(X,\omega).
\]
\item{(Disjoint union)}
\[
c_k\left(\coprod_{i=1}^n(X_i,\omega_i)\right) = \max_{k_1+\cdots+k_n=k}\sum_{i=1}^n c_{k_i}(X_i,\omega_i).
\]
\item{(Ellipsoid)}
If $a,b>0$, define the ellipsoid
\[
E(a,b)=\left\{(z_1,z_2)\in\C^2\;\bigg|\; \frac{\pi|z_1|^2}{a} + \frac{\pi|z_2|^2}{b} \le 1 \right\}.
\]
Then $c_k(E(a,b))=N(a,b)_k$, where $N(a,b)$ denotes the sequence of all nonnegative integer linear combinations of $a$ and $b$, arranged in nondecreasing order, indexed starting at $k=0$.
\end{description}
Here we are using the standard symplectic form on $\C^2=\R^4$. In particular, define the ball
\[
B(a)=E(a,a).
\]
It then follows from the Ellipsoid property that
\begin{equation}
\label{eqn:ball1}
c_k(B(a)) = ad
\end{equation}
where $d$ is the unique nonnegative integer such that
\begin{equation}
\label{eqn:ball2}
\frac{d^2+d}{2} \le k \le \frac{d^2+3d}{2}.
\end{equation}

It was shown by McDuff \cite{mcd}, see also the survey \cite{pnas}, that there exists a symplectic embedding $\op{int}(E(a,b))\to E(c,d)$ if and only if $N(a,b)_k\le N(c,d)_k$ for all $k$. Thus ECH capacities give a sharp obstruction to symplectically embedding one (open) ellipsoid into another. It follows from work of Frenkel-M\"uller \cite[Prop. 1.4]{fm}, see \cite[Cor.\ 11]{pnas}, that ECH capacities also give a sharp obstruction to symplectically embedding an open ellipsoid into a polydisk
\[
P(a,b) = \left\{(z_1,z_2)\in\C^2\;\big|\;\pi|z_1|^2\le a,\; \pi|z_2|^2\le b\right\}.
\]

On the other hand, ECH capacities do not give sharp obstructions to embedding a polydisk into an ellipsoid. For example, if there is a symplectic embedding $P(1,1)\to E(a,2a)$, then ECH capacities only imply that $a\ge 1$, but the Ekeland-Hofer capacities imply that $a\ge 3/2$, see \cite[Rmk.\ 1.8]{qech}. Another example is that if there is a symplectic embedding from $P(1,2)$ into the ball $B(c)$, then both ECH capacities and Ekeland-Hofer capacities only imply that $c\ge 2$; but in fact it was recently shown by Hind-Lisi \cite{hl} that $c\ge 3$. In particular, the inclusions $P(1,1)\to E(3/2,3)$ and $P(1,2)\to B(3)$ are ``optimal'' in the following sense:

\begin{definition}
A symplectic embedding $\phi:(X,\omega)\to (X',\omega')$ is {\em optimal\/} if there does not exist a symplectic embedding $(X,r\omega)\to (X',\omega')$ for any $r>1$.
\end{definition}

\begin{remark}
\label{remark:optimal}
It follows from the Monotonicity and Conformality properties that if $0<c_k(X,\omega)=c_k(X',\omega')$ for some $k$, and if a symplectic embedding $(X,\omega)\to (X',\omega')$ exists, then it is optimal.
\end{remark}

\subsection{Concave toric domains}
\label{sec:ctd}

We would like to compute more examples of ECH capacities and find more examples of sharp embedding obstructions and optimal symplectic embeddings. An interesting family of symplectic four-manifolds is obtained as follows. If $\Omega$ is a domain in the first quadrant of the plane, define the ``toric domain''
\[
X_\Omega = \left\{z\in\C^2 \;\big|\; \pi(|z_1|^2,|z_2|^2)\in\Omega\right\}.
\]
For example, if $\Omega$ is the triangle with vertices $(0,0)$, $(a,0)$, and $(0,b)$, then $X_\Omega$ is the ellipsoid $E(a,b)$.

The ECH capacities of toric domains $X_\Omega$ when $\Omega$ is convex and does not touch the axes were computed in \cite[Thm.\ 1.11]{qech}, see \cite[Thm.\ 4.14]{bn}. Also, the assumption that $\Omega$ does not touch the axes can be removed in some and conjecturally all cases. In this paper we consider the following new family of toric domains:

\begin{definition}
A {\em concave toric domain\/} is a domain $X_\Omega$ where $\Omega$ is the closed region bounded by the horizontal segment from $(0,0)$ to $(a,0)$, the vertical segment from $(0,0)$ to $(0,b)$, and the graph of a convex function $f:[0,a]\to [0,b]$ with $f(0)=b$ and $f(a)=0$.
The concave toric domain $X_\Omega$ is {\em rational\/} if $f$ is piecewise linear and $f'$ is rational wherever it is defined.
\end{definition}

McDuff showed in \cite[Cor.\ 2.5]{mcd} that the ECH capacities of an ellipsoid $E(a,b)$ with $a/b$ rational are equal to the ECH capacities of a certain ``ball packing'' of the ellipsoid, namely a certain finite disjoint union of balls whose interior symplectically embeds into the ellipsoid filling up all of its volume. These balls are determined by a ``weight expansion'' of the pair $(a,b)$.  In the present work, we generalize this to give a similar formula for the ECH capacities of any rational concave toric domain. In \S\ref{sec:path} we will give a different formula for the ECH capacities of concave toric domains which are not necessarily rational.

\subsection{Weight expansions}
\label{sec:weight}

Let $X_{\Omega}$ be a rational concave toric domain.  The {\em weight expansion\/} of $\Omega$ is a finite unordered list of (possibly repeated) positive real numbers $w(\Omega)=(a_1,\ldots,a_n)$ defined inductively as follows.

If $\Omega$ is the triangle with vertices $(0,0)$, $(a,0)$, and $(0,a)$, then $w(\Omega)=(a)$.

Otherwise, let $a>0$ be the largest real number such that the triangle with vertices $(0,0)$, $(a,0)$, and $(0,a)$ is contained in $\Omega$. Call this triangle $\Omega_1$. The line $x+y=a$ intersects the graph of $f$ in a line segment from $(x_2,a-x_2)$ to $(x_3,a-x_3)$ with $x_2\le x_3$. Let $\Omega_2'$ denote the portion of $\Omega$ above the line $x+y=a$ and to the left of the line $x=x_2$. By first applying the translation $(x,y)\mapsto (x,y-a)$ to $\Omega_2'$ and then multiplying by $\begin{pmatrix}1 & 0 \\ 1 & 1\end{pmatrix}\in SL_2(\Z)$, we obtain a new domain $\Omega_2$ (which we interpret as the empty set if $x_2=0$). Let $\Omega_3'$ denote the portion of $\Omega$ above the line $x+y=a$ and to the right of the line $x=x_3$. By first applying the translation $(x,y)\mapsto (x-a,y)$ and then multiplying by $\begin{pmatrix}1 & 1 \\ 0 & 1\end{pmatrix}\in SL_2(\Z)$, we obtain a new domain $\Omega_3$ (which we interpret as the empty set if $x_3=a$). See Figure~\ref{conc1} for an example of this decomposition. Observe that each $X_{\Omega_i}$ is a rational concave toric domain. We now define
\begin{equation}
\label{eqn:decomposition}
w(\Omega)= w(\Omega_1) \cup w(\Omega_2) \cup w(\Omega_3).
\end{equation}
Here the symbol `$\cup$' indicates ``union with repetitions'', and we interpret $w(\Omega_i)=\emptyset$ if $\Omega_i=\emptyset$. See \S\ref{sec:applications} below for examples of weight expansions.

 \begin{figure}
    \centering
    \begin{overpic}[scale=1.02]{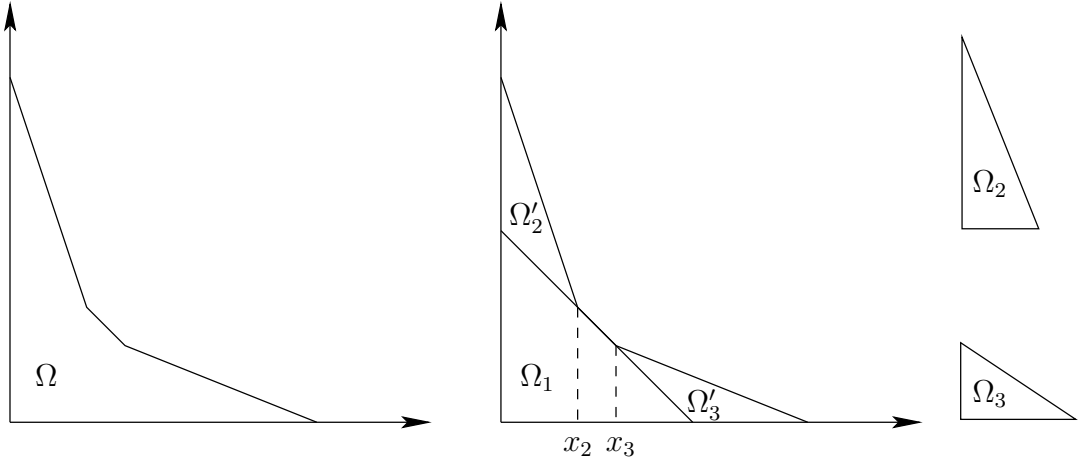}
    \put(3,4){$\Omega$}
    \put(48,4){$\Omega_1$}
    \put(47,19){$\Omega_2'$}
    \put(63.5,1.6){$\Omega_3'$}
    \put(52,-2){$x_2$}
    \put(56,-2){$x_3$}
    \put(90,22){$\Omega_2$}
    \put(90,2.5){$\Omega_3$}
    \end{overpic}
    \vspace{2mm}
    \caption{The inductive step in the decomposition of a concave toric domain}\label{conc1}
    \label{nbhd}
\end{figure}

When $\Omega$ is a rational triangle, the weight expansion is determined by the continued fraction expansion of the slope of the diagonal, and in particular $w(\Omega)$ is finite, see \cite[\S2]{mcd}. If the upper boundary of $\Omega$ has more than one edge, then the upper boundary of each $\Omega_i$ will have fewer edges than that of $\Omega$, so by induction $w(\Omega)$ is still finite.

\begin{theorem}
\label{thm:weight}
The ECH capacities of a rational concave toric domain $X_\Omega$ with weight expansion $(a_1,\ldots,a_n)$ are given by
\[
c_k(X_\Omega) = c_k\left(\coprod_{i=1}^n B(a_i)\right).
\]
\end{theorem}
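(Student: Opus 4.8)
The plan is to prove the theorem by induction on the number of edges in the upper boundary of $\Omega$, mirroring the inductive structure of the weight expansion itself. The base case is when $\Omega$ is the rational triangle with vertices $(0,0)$, $(a,0)$, $(0,a)$, so that $X_\Omega = B(a)$; here the statement is the tautology $c_k(B(a)) = c_k(B(a))$, since $w(\Omega) = (a)$. For the inductive step, I would assume the theorem holds for all rational concave toric domains whose upper boundary has fewer edges than that of $\Omega$, so in particular it applies to $X_{\Omega_2}$ and $X_{\Omega_3}$ from the decomposition. The subtle point is that $X_{\Omega_1} = B(a)$ also needs to be handled, which it is, since a triangle's weight expansion is finite via continued fractions, and more importantly the recursion on $\Omega_1$, $\Omega_2$, $\Omega_3$ terminates. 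Combining the inductive hypothesis with the Disjoint union property of ECH capacities, it then suffices to prove the single geometric identity
\[
c_k(X_\Omega) = c_k\bigl(B(a) \amalg X_{\Omega_2} \amalg X_{\Omega_3}\bigr)
\]
for the one-step decomposition $\Omega = \Omega_1 \cup \Omega_2' \cup \Omega_3'$.

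To establish that identity, I would work directly with the combinatorial definition of ECH capacities of toric domains that will be recalled in \S\ref{sec:capacitiesreview} — presumably the description in terms of counting lattice points in, or computing weighted lengths of, paths in $\Omega$ (the same machinery used in \cite{qech} to compute capacities of convex toric domains, adapted to the concave case). The key is that the $SL_2(\Z)$ transformations $\begin{pmatrix}1 & 0 \\ 1 & 1\end{pmatrix}$ and $\begin{pmatrix}1 & 1 \\ 0 & 1\end{pmatrix}$ used to define $\Omega_2$ and $\Omega_3$ act on the lattice $\Z^2$, hence preserve the relevant lattice-point counts, and the translations are affine shifts that the combinatorial formula can absorb. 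So one expects that a lattice path through $\Omega$ realizing $c_k(X_\Omega)$ decomposes into three sub-paths: one living in (a copy of) the triangle $\Omega_1$, one whose image under the affine map lies in $\Omega_2$, and one whose image lies in $\Omega_3$, with the total ``action'' adding up. Conversely, sub-paths in the three pieces glue to a path in $\Omega$. This matching of optimizers is exactly what produces the $\max$ over $k_1 + k_2 + k_3 = k$ appearing in the Disjoint union property, so the whole argument reduces to checking that the combinatorial invariant is additive under this cut-and-paste along the line $x + y = a$.

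The main obstacle, I expect, is verifying the gluing compatibility at the two ``seams'': the line $x+y=a$ (where $\Omega_1$ meets $\Omega_2'$ and $\Omega_3'$) and the vertical/horizontal lines $x = x_2$ and $x = x_3$ (where $\Omega_2'$ meets $\Omega_3'$ and the leftover region that degenerates). One must check that no ``action'' is lost or double-counted along these seams — e.g., that a lattice point on the line $x+y=a$ is counted consistently — and that the segment of the graph of $f$ lying on the line $x+y=a$ (from $(x_2, a-x_2)$ to $(x_3, a-x_3)$) contributes correctly to the triangle piece and not to the side pieces. A secondary technical point is that the combinatorial formula for $c_k$ of a concave toric domain may a priori be defined only for rational (piecewise-linear) $f$, so one should confirm all the pieces $\Omega_i$ remain rational concave toric domains, which the excerpt already asserts. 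Once the additivity of the combinatorial invariant under the one-step decomposition is in hand, the induction closes immediately, and Remark~\ref{remark:optimal} together with Theorem~\ref{thm:weight} will be what feeds the applications to Gromov width and ball packing mentioned in the abstract.
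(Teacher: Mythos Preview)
Your inductive cut-and-paste picture is correct and is indeed carried out in the paper (Lemma~\ref{lem:rationallb}): a concave integral path $\Lambda$ for $\Omega$ splits, via exactly the $SL_2(\Z)$ maps you name, into paths $\Lambda_1,\Lambda_2,\Lambda_3$ for $\Omega_1,\Omega_2,\Omega_3$, with both the lattice-point count $\mc{L}$ and the $\Omega$-length $\ell_\Omega$ additive. So that part of your plan is on target.

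The gap is in what you take as input. You assume there is a ``combinatorial definition of ECH capacities of toric domains'' waiting in \S\ref{sec:capacitiesreview} that you can plug into. There isn't: \S\ref{sec:capacitiesreview} only recalls the genuine definition via the ECH chain complex of a contact three-manifold. The path formula $c_k(X_\Omega)=\max\{\ell_\Omega(\Lambda)\mid \mc{L}(\Lambda)=k\}$ is not a definition but the other main theorem (Theorem~\ref{thm:path}), and its hard half---the upper bound $c_k(X_\Omega)\le\max\{\ell_\Omega(\Lambda)\}$---is proved in \S\ref{sec:ub} by perturbing the degenerate contact form on $\partial X_\Omega$, enumerating its Reeb orbits, and computing Chern classes, linking numbers, and Conley--Zehnder indices to identify the ECH grading with $2\mc{L}(\Lambda)$. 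None of this is available from \cite{qech}: the convex case there has a different boundary geometry, and the concave result is new to this paper. Your induction, as written, is circular: it presupposes a formula of the same strength as the theorem you are proving.

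Concretely, the paper does not prove the one-step identity $c_k(X_\Omega)=c_k(B(a)\amalg X_{\Omega_2}\amalg X_{\Omega_3})$ directly. Instead it sandwiches:
\[
c_k(X_\Omega)\;\ge\;c_k\Bigl(\coprod_i B(a_i)\Bigr)\;\ge\;\max\{\ell_\Omega(\Lambda)\mid\mc{L}(\Lambda)=k\}\;\ge\;c_k(X_\Omega).
\]
The first inequality is Lemma~\ref{lem:weighteasy} (Traynor trick: an actual symplectic embedding of the balls, which you do not mention), the second is your combinatorial induction (Lemma~\ref{lem:rationallb}), and the third is the ECH computation (Lemma~\ref{lem:pathub}). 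Your proposal supplies only the middle link.
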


\begin{remark}
\label{rmk:approximation}
It follows from the Disjoint Union property of ECH capacities, together with the formulas \eqref{eqn:ball1} and \eqref{eqn:ball2} for the ECH capacities of a ball, that
\begin{equation}
\label{eqn:weightcompute}
c_k\left(\coprod_{i=1}^n B(a_i)\right) = \max\left\{\sum_{i=1}^na_id_i \;\bigg|\; \sum_{i=1}^n\frac{d_i^2+d_i}{2} \le k\right\},
\end{equation}
where $d_1,\ldots,d_n$ are nonnegative integers. To compute the maximum on the right hand side of \eqref{eqn:weightcompute}, if we order the weight expansion so that $a_1\ge\cdots\ge a_n$, then we can assume without loss of generality that $d_i=0$ whenever $i>k$.
\end{remark}

\begin{remark}
One can extend Theorem~\ref{thm:weight} to concave toric domains which are not rational; in this case the weight expansion is defined inductively as before, but is now an infinite sequence. To prove this extension of Theorem~\ref{thm:weight}, one can approximate an arbitrary concave toric domain $X_\Omega$ by rational concave toric domains whose weight expansion is the portion of the weight expansion of $X_\Omega$ obtained from the first $n$ steps, and then use the continuity of the ECH capacities in Lemma~\ref{lem:continuity1} below. 
\end{remark}

One inequality in Theorem~\ref{thm:weight} has a quick proof:

\begin{lemma}
\label{lem:weighteasy}
If $X_\Omega$ is a rational concave toric domain with weight expansion $(a_1,\ldots,a_n)$, then
\begin{equation}
\label{eqn:weighteasy}
c_k(X_\Omega) \ge c_k\left(\coprod_{i=1}^n B(a_i)\right).
\end{equation}
\end{lemma}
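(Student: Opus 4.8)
The plan is to prove the stronger statement that $\coprod_{i=1}^n\op{int}(B(a_i))$ symplectically embeds into $\op{int}(X_\Omega)$, and then to deduce \eqref{eqn:weighteasy} from Monotonicity. For the deduction one only needs $c_k\big(\coprod_{i=1}^n\op{int}(B(a_i))\big)=c_k\big(\coprod_{i=1}^n B(a_i)\big)$, which follows from Monotonicity and Conformality: from $(1-\eps)\coprod_i B(a_i)\cong\coprod_i B((1-\eps)a_i)\subseteq\coprod_i\op{int}(B(a_i))\subseteq\coprod_i B(a_i)$ we get $(1-\eps)\,c_k\big(\coprod_i B(a_i)\big)\le c_k\big(\coprod_i\op{int}(B(a_i))\big)\le c_k\big(\coprod_i B(a_i)\big)$, and we let $\eps\to 0$.

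To construct the embedding I would induct on the complexity of $\Omega$, using the same well-founded ordering that makes $w(\Omega)$ finite: first the number of edges of the graph of $f$, and — when that graph is a single segment, i.e.\ $\Omega$ is a triangle and $X_\Omega$ an ellipsoid — secondly the length of the continued fraction expansion of its slope. If $\Omega$ is the triangle with vertices $(0,0)$, $(a,0)$, $(0,a)$, then $w(\Omega)=(a)$ and $X_\Omega=B(a)$, so the identity map works. Otherwise run one step of the weight-expansion construction: $\Omega=\Omega_1\cup\Omega_2'\cup\Omega_3'$, where $\Omega_1$ is the triangle with vertices $(0,0),(a,0),(0,a)$ (so $X_{\Omega_1}=B(a)$) and $\Omega_2',\Omega_3'$ are the parts of $\Omega$ lying above the line $x+y=a$ and, respectively, to the left of $x=x_2$ and to the right of $x=x_3$. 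Since $w(\Omega)=(a)\cup w(\Omega_2)\cup w(\Omega_3)$ while $X_{\Omega_2}$ and $X_{\Omega_3}$ are rational concave toric domains of strictly smaller complexity in this ordering, the inductive hypothesis gives symplectic embeddings of the disjoint unions of open balls indexed by $w(\Omega_2)$ and $w(\Omega_3)$ into $\op{int}(X_{\Omega_2})$ and $\op{int}(X_{\Omega_3})$; so it suffices to produce a single symplectic embedding
\[
\op{int}(B(a))\ \sqcup\ \op{int}(X_{\Omega_2})\ \sqcup\ \op{int}(X_{\Omega_3})\ \hookrightarrow\ \op{int}(X_\Omega).
\]

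Constructing this embedding is the crux, and the step I expect to be the main obstacle. Writing $\mu(z)=\pi(|z_1|^2,|z_2|^2)$, one sends $\op{int}(B(a))$ to the sub-ball $\op{int}(B(a))\subseteq\op{int}(X_\Omega)$, which lies in $\{x+y<a\}$, while $X_{\Omega_2}$ and $X_{\Omega_3}$ must be fitted into the complementary pieces $\mu^{-1}(\Omega_2')$ and $\mu^{-1}(\Omega_3')$ of $X_\Omega$, which lie in $\{x+y\ge a,\ x\le x_2\}$ and $\{x+y\ge a,\ x\ge x_3\}$ respectively; since $x_2\le x_3$ these three target regions have pairwise disjoint interiors, so the three embeddings will automatically assemble into one once they are found. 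It thus remains to symplectically embed $\op{int}(X_{\Omega_2})$ into $\op{int}(\mu^{-1}(\Omega_2'))$ and $\op{int}(X_{\Omega_3})$ into $\op{int}(\mu^{-1}(\Omega_3'))$. Here the exact recipe of the weight expansion is essential: $\Omega_2$ (resp.\ $\Omega_3$) is obtained from $\Omega_2'$ (resp.\ $\Omega_3'$) by a lattice-preserving affine map — a translation followed by $\begin{pmatrix}1&0\\1&1\end{pmatrix}$ (resp.\ $\begin{pmatrix}1&1\\0&1\end{pmatrix}$) — and one wants to conclude that $\op{int}(\mu^{-1}(\Omega_2'))$ and $\op{int}(X_{\Omega_2})$ are symplectomorphic. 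One cannot simply realize this affine map as a toric symplectomorphism of $\C^2$: it fails to match up the portions of the two moment polytopes lying on the coordinate axes, since $\mu^{-1}(\Omega_2')$ meets $\{z_2=0\}$ in at most a single circle whereas $X_{\Omega_2}$ contains an entire face there. The symplectomorphism must therefore be genuinely nontrivial near that hyperplane — an explicit folding construction in the spirit of Traynor, and precisely the four-dimensional toric analogue of the Euclidean-algorithm step underlying McDuff's weight-expansion ball packings of ellipsoids \cite{mcd}. Carrying this out carefully (and checking that the relevant affine images lie back in the first quadrant compatibly) is where the real work lies; granted it, the induction closes.
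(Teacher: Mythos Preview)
Your overall strategy---construct a symplectic embedding $\coprod_i \op{int}(B(a_i))\hookrightarrow X_\Omega$ and invoke Monotonicity---is exactly what the paper does. But your inductive scheme manufactures an obstacle that the paper sidesteps entirely, and the step you single out as ``where the real work lies'' is in fact not needed.

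The paper runs the recursion purely in the plane, not in $\C^2$. Unrolling the weight expansion gives a decomposition of $\Omega$ into pairwise disjoint \emph{open} triangles $T_1,\dots,T_n$, with each $T_i$ affine equivalent (via $SL_2(\Z)$ and translation) to $\bigtriangleup(a_i)$. One then applies the Traynor trick directly to each triangle: Traynor gives $\op{int}(B(a_i))\hookrightarrow X_{\bigtriangleup(a_i)}$, and since any open triangle in the first quadrant contains no points on the axes, the affine equivalence lifts to a symplectomorphism $X_{\bigtriangleup(a_i)}\cong X_{T_i}$. Composing yields $\op{int}(B(a_i))\hookrightarrow X_{T_i}\subset X_\Omega$ for all $i$ simultaneously, with disjoint images because the $T_i$ are disjoint.

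The difficulty you flag---that $\Omega_2$ meets an axis while $\Omega_2'$ does not, so the affine map cannot be realized torically---is real, and embedding $\op{int}(X_{\Omega_2})$ into $\mu^{-1}(\Omega_2')$ would indeed require a nontrivial non-toric construction. But it arises only because you try to transport the whole concave domain $X_{\Omega_2}$ across the affine equivalence in one piece. If instead you first break $\Omega_2$ into its own open triangles and then push those triangles forward under the inverse affine map, each lands as an open triangle inside $\Omega_2'\subset\Omega$, automatically off the axes, and the Traynor trick applies to each one separately with no folding required. In short: do the induction on the $2$-dimensional regions, not on the $4$-dimensional toric domains.
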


To prove Lemma~\ref{lem:weighteasy}, we will use the following version of the ``Traynor trick".  Call two domains $\Omega_1$ and $\Omega_2$ in the first quadrant {\em affine equivalent} if one can be obtained from the other by the action of $SL_2(\Z)$ and translation.  Let $\bigtriangleup(a)$ denote the open triangle with vertices $(0,0)$, $(a,0)$, and $(0,a)$.

\begin{lemma}
\label{lem:traynortrick}
If $T$ is an open triangle in the first quadrant which is affine equivalent to $\bigtriangleup(a)$, then there exists a symplectic embedding $\op{int}(B(a))\to X_T$.
\end{lemma}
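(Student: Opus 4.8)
The plan is to use action--angle coordinates together with the $SL_2(\Z)$-symmetry of toric domains to reduce to one model embedding, and then to build that embedding by a ``wrapping'' construction in the spirit of Traynor.

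First I would set up coordinates. On $(\C^*)^2=\{z_1z_2\neq 0\}$ write $x_j=\pi|z_j|^2\in(0,\infty)$ and $\theta_j=\arg(z_j)/2\pi\in\R/\Z$; in these the standard form is $dx_1\wedge d\theta_1+dx_2\wedge d\theta_2$. Since $T$ is an open triangle lying in the first quadrant, it actually lies in the \emph{open} first quadrant (an open subset of the closed quadrant cannot meet the axes), so $X_T$ lies in $(\C^*)^2$ and is just $T\times T^2$. Writing the affine equivalence as $T=A\cdot\bigtriangleup(a)+v$ with $A\in SL_2(\Z)$, $v\in\R^2$, the map $(x,\theta)\mapsto(Ax+v,\ (A^\top)^{-1}\theta)$ is a symplectomorphism $\bigtriangleup(a)\times T^2\xrightarrow{\ \sim\ }T\times T^2=X_T$: the second factor is well defined on $T^2$ because $(A^\top)^{-1}\in SL_2(\Z)$, and a one-line computation shows the split form is preserved. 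So it is enough to construct a symplectic embedding $\op{int}(B(a))\to X_{\bigtriangleup(a)}$ --- and note that $X_{\bigtriangleup(a)}$ is precisely $\op{int}(B(a))$ with the two complex lines $\{z_1=0\}$ and $\{z_2=0\}$ removed.

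The heart of the matter is this last embedding, and here I would run Traynor's argument. Fix a large $N$, put $\epsilon=a/N$, and cut $\op{int}(B(a))$ along the hypersurfaces $\pi|z_1|^2=k\epsilon$ $(1\le k\le N-1)$ into $N$ thin slabs. For $k\ge 2$ the $k$-th slab is symplectomorphic to a short interval in $x_1$ times $S^1_{\theta_1}$ times a $z_2$-disk of area $\le a-(k-1)\epsilon$; the bottom slab is an $\epsilon$-area $z_1$-disk carrying over each point a $z_2$-disk of area $a-\pi|z_1|^2$. Now translate every slab to lie over $x_1\in(0,\epsilon)$ and shift the $k$-th one by $(k-1)/N$ in $\theta_1$, so that the slabs occupy disjoint sheets $\theta_1\in((k-1)/N,k/N)$ --- that is, wrap the stack of slabs around the $\theta_1$-circle. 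Using the elementary fact that a disk of area $A$ symplectically embeds into $(0,A')\times S^1$ whenever $A'>A$, and taking $N$ large so that on each sheet the relevant $z_2$-disk has area safely below $a-x_1$, one arranges the image to lie in $\{0<x_1<\epsilon,\ 0<x_2<a-x_1\}\times T^2\subset X_{\bigtriangleup(a)}$, with the bottom slab (whose $z_2$-disks swell to nearly the full area $a$) routed into the narrow sliver of $\bigtriangleup(a)$ near the vertex $(0,a)$.

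The main obstacle is turning this slab-by-slab recipe into a single smooth symplectomorphism with image inside $X_{\bigtriangleup(a)}$: one must interpolate the translations of the slabs with cutoff functions without the intermediate region spilling past the hypotenuse $x_1+x_2=a$ or onto $\{z_1=0\}$, and the tightest spot is exactly the bottom slab near $z_1=0$, which carries almost all of the $z_2$-area --- this is precisely the technical content of Traynor's construction. A final minor point is upgrading from $B(a-\epsilon)$ to the full open ball: either carry out the construction on a nested exhaustion $B(a-1/n)\nearrow\op{int}(B(a))$ with compatible embeddings, or observe that the recipe above applies verbatim with $B(a)$ replaced throughout by its interior.
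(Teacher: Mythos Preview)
Your overall approach matches the paper's: reduce to the model embedding $\op{int}(B(a))\to X_{\bigtriangleup(a)}$ via the $SL_2(\Z)$ action in action--angle coordinates (the paper states this symplectomorphism without writing it out), and then invoke Traynor for the model embedding. The paper simply cites \cite[Prop.~5.2]{traynor} for that step and moves on; you instead try to sketch the construction.

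That sketch, however, has a concrete flaw. For $k\ge 2$ the $k$-th slab is an \emph{annulus} in the $z_1$-factor: it sits over $x_1\in((k-1)\epsilon,k\epsilon)$ with $\theta_1$ ranging over the \emph{full} circle $\R/\Z$. Translating in $x_1$ and then shifting $\theta_1$ by $(k-1)/N$ does nothing to the $\theta_1$-range --- the slab still covers all of $\R/\Z$, so after your operation all the slabs sit over $x_1\in(0,\epsilon)$, $\theta_1\in\R/\Z$ and overlap; they do not occupy disjoint sheets $\theta_1\in((k-1)/N,k/N)$. Put differently, the source annulus has $(x_1,\theta_1)$-area $\epsilon$, while your proposed target sheet has area $\epsilon/N$, so no area-preserving map can do what you describe. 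Traynor's actual mechanism goes in the opposite direction (embedding disks into annuli, not annuli into arcs), and the fibered version requires more care than a slab-by-slab shift.

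Since you end by conceding that the hard part ``is precisely the technical content of Traynor's construction,'' your proof effectively defers to the same reference the paper does --- so the argument is fine as a citation, but the wrapping paragraph as written should be dropped or replaced by a correct outline of Traynor's argument.
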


\begin{proof}
It follows from \cite[Prop. 5.2]{traynor} that there exists a symplectic embedding
\[
\op{int}(B(a)) \to X_{\bigtriangleup(a)}.
\]  
On the other hand, 
if $\Omega_1$ and $\Omega_2$ are affine equivalent and do not contain any points on the axes, then $X_{\Omega_1}$ is symplectomorphic to $X_{\Omega_2}$.
Thus $X_{\bigtriangleup(a)}$ is symplectomorphic to $X_{T}$ and  we are done.
\end{proof}
 
\begin{proof}[Proof of Lemma~\ref{lem:weighteasy}.]
It follows from the definition of the weight expansion that $\Omega$ has a decomposition into open triangles $T_1,\ldots,T_n$ such that $T_i$ is affine equivalent to $\bigtriangleup(a_i)$ for each $i$.  By Lemma~\ref{lem:traynortrick}, for each $i$ there is a symplectic embedding $\op{int}(B(a_i)) \to X_{T_i}$. Hence there is a symplectic embedding
\[
\coprod_{i=1}^n\op{int}(B(a_i)) \to X_\Omega.
\]
It then follows from the Monotonicity property of ECH capacities that \eqref{eqn:weighteasy} holds.
\end{proof}

\subsection{Examples and first applications}
\label{sec:applications}

We now give some examples of how Theorem~\ref{thm:weight} can be used to prove that certain symplectic embeddings are optimal.

The following lemma will be helpful. If $\ell$ is a nonnegative integer, define $w_\ell(\Omega)\subset w(\Omega)$ to be the list of positive real numbers obtained from the first $\ell$ steps in the inductive construction of the weight expansion. That is, $w_0(\Omega) = \emptyset$ and 
\[
w_\ell(\Omega) = w(\Omega_1) \cup w_{\ell-1}(\Omega_2) \cup w_{\ell-1}(\Omega_3)
\]
for $\ell > 0$.


\begin{lemma}
\label{lem:approximation}
If $w_{\ell}(\Omega)=(a_1,\ldots,a_m)$, then for any $k \le \ell$,  
\[
c_k(X_\Omega) = c_k\left(\coprod_{i=1}^m B(a_i)\right).
\]
\end{lemma}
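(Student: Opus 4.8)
The plan is to prove the two inequalities separately, the content lying in the upper bound. The lower bound $c_k(X_\Omega) \ge c_k\big(\coprod_{i=1}^m B(a_i)\big)$ holds for every $k$ by the argument already used for Lemma~\ref{lem:weighteasy}: after $\ell$ steps of the inductive construction, $\Omega$ is cut into $m$ interior-disjoint open triangles $T_1,\dots,T_m$ with $T_i$ affine equivalent to $\bigtriangleup(a_i)$, together with ``remainder'' regions which we simply discard; Lemma~\ref{lem:traynortrick} then gives a symplectic embedding $\coprod_{i=1}^m \op{int}(B(a_i)) \to X_\Omega$, and Monotonicity applies. This uses nothing about rationality.

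For the upper bound $c_k(X_\Omega) \le c_k\big(\coprod_{i=1}^m B(a_i)\big)$ when $k \le \ell$, I would first take $X_\Omega$ rational and invoke Theorem~\ref{thm:weight}, which identifies $c_k(X_\Omega)$ with $c_k\big(\coprod_b B(b)\big)$, where $b$ ranges over the full (finite) weight expansion of $\Omega$, whose underlying multiset contains $w_\ell(\Omega) = (a_1,\dots,a_m)$. It then suffices to show that, for $k \le \ell$, the ``deep'' weights --- those lying outside $w_\ell(\Omega)$ --- are irrelevant to the maximum of Remark~\ref{rmk:approximation}. The key structural input is a monotonicity property of the weight-expansion tree: if $\nu'$ is a child of a node $\nu$, then the first weight of $\Omega_{\nu'}$ is at most the first weight $a_\nu$ of $\Omega_\nu$. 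Indeed, $\Omega_2$ has $x$-extent $x_2 \le a_\nu$ and $\Omega_3$ has $y$-extent $a_\nu - x_3 \le a_\nu$ --- the two $SL_2(\Z)$-transformations each preserve one coordinate --- while the first weight of any concave toric domain is bounded above by each of its two coordinate extents, since $\bigtriangleup(c)$ has vertices $(c,0)$ and $(0,c)$; by induction every weight in the subtree below $\nu$ is $\le a_\nu$. Consequently, any weight occurring at depth $\ge \ell$ in the tree is dominated by the $\ell$ weights along its root path, all of which lie in $w_\ell(\Omega)$, and a short Hall-type count on multisets then shows that for $k \le \ell$ the $k$ largest elements of the full weight expansion (counted with multiplicity) already lie in $w_\ell(\Omega)$. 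Since by Remark~\ref{rmk:approximation} the maximum defining $c_k$ uses only the $k$ largest weights, this yields $c_k\big(\coprod_b B(b)\big) = c_k\big(\coprod_{i=1}^m B(a_i)\big)$, hence the upper bound (in fact equality) in the rational case.

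For an arbitrary concave toric domain I would pass to the general case by approximation: choose rational concave toric domains $\Omega^{(n)}$ converging to $\Omega$. The first $\ell$ steps of the weight expansion involve only finitely many operations --- taking the largest inscribed symmetric triangle, intersecting with half-planes, and applying fixed elements of $SL_2(\Z)$ --- so $w_\ell(\Omega^{(n)}) \to w_\ell(\Omega)$, after an innocuous perturbation to avoid degenerate intersections, and $c_k\big(\coprod_i B(\cdot)\big)$ is continuous in the weights as a maximum of finitely many linear functions of them. Applying the rational case to each $\Omega^{(n)}$ and letting $n \to \infty$, together with the continuity of ECH capacities from Lemma~\ref{lem:continuity1}, gives the result for $\Omega$.

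The main obstacle I anticipate is the combinatorial-geometric heart of the rational case: establishing the monotonicity of weights down the tree and then carrying out the bookkeeping with multiplicities that pins down exactly which weights survive for each $k \le \ell$. The approximation step should be routine given Lemma~\ref{lem:continuity1}, modulo checking that an arbitrary concave toric domain has nearby rational concave toric domains whose length-$\ell$ weight prefix is close to that of $\Omega$.
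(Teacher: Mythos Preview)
Your approach is correct and is essentially the paper's proof, just with more detail filled in and an extra generality step. The paper's argument (for rational $\Omega$ only, which is implicit from its use of Theorem~\ref{thm:weight}) is: apply Theorem~\ref{thm:weight} to reduce to showing $c_k\big(\coprod_{i=1}^n B(a_i)\big) = c_k\big(\coprod_{i=1}^m B(a_i)\big)$, invoke Remark~\ref{rmk:approximation} so that both sides depend only on the $k$ largest weights, and then assert in one line that ``it follows from the definition of the weight expansion and induction that the $k$ largest numbers in $w(\Omega)$ are a subset of $w_k(\Omega)$''. Your monotonicity-down-the-tree argument is exactly what lies behind that one line; note that the ``Hall-type count'' you anticipate is not really needed, since once you know every weight is dominated by each of its ancestors, a node at depth $>k$ already has $k$ ancestors of weight at least its own, so the $k$ largest weights can be taken at depth $\le k$ directly. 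Your lower bound via the Traynor trick is likewise the paper's Lemma~\ref{lem:weighteasy} restricted to the partial expansion. The approximation step to non-rational $\Omega$ is extra relative to the paper's lemma (which is only stated and used in the rational context), but is correct and in the spirit of the paper's Remark following Theorem~\ref{thm:weight}.
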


\begin{proof}
Let $(a_1,\ldots,a_n)$ be the weight expansion for $\Omega$.  By Theorem~\ref{thm:weight}, it is enough to prove that
\begin{equation}
\label{eqn:weightab}
c_k\left(\coprod_{i=1}^n B(a_i)\right) = c_k\left(\coprod_{i=1}^m B(a_i)\right).
\end{equation}
By Remark~\ref{rmk:approximation}, the left hand side of \eqref{eqn:weightab} is determined by the $k$ largest numbers in $w(\Omega)$, and the right hand side of \eqref{eqn:weightab} is determined by the $k$ largest numbers in $w_\ell(\Omega)$. It follows from the definition of the weight expansion and induction that the $k$ largest numbers in $w(\Omega)$ are a subset of $w_k(\Omega)$; and the latter is a subset of $w_\ell(\Omega)$ since $k\le \ell$. Thus the two sides of \eqref{eqn:weightab} are equal.
\end{proof}

We now have the following corollary of Theorem~\ref{thm:weight}.

\begin{corollary}
\label{cor:gromovwidth}
If $X_\Omega$ is a rational concave toric domain, let $a$ be the largest real number such that $B(a)\subset X_\Omega$. Then the inclusion $B(a)\subset X_\Omega$ is optimal, so the Gromov width of $X_\Omega$ equals $a$.
\end{corollary}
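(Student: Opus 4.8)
The plan is to deduce this from Remark~\ref{remark:optimal} by exhibiting a single index $k$ at which $c_k(B(a))$ and $c_k(X_\Omega)$ agree and are positive. The first observation is that $a$ coincides with the first number $a_1$ produced in the inductive construction of the weight expansion of $\Omega$. Indeed, for any $a'>0$ we have $B(a') = X_T$ where $T$ is the closed triangle with vertices $(0,0)$, $(a',0)$, $(0,a')$, and for toric domains $X_T\subseteq X_\Omega$ if and only if $T\subseteq\Omega$; hence the largest $a'$ with $B(a')\subseteq X_\Omega$ is exactly the largest $a'$ such that this triangle lies in $\Omega$, which is $a_1$. In particular the inclusion $\iota\colon B(a)\hookrightarrow X_\Omega$ is a symplectic embedding, so it remains only to check optimality.

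Now I would compute $c_1$. Since $\Omega_1$ in the first step of the construction is precisely the triangle $\bigtriangleup(a_1)$, whose weight expansion is $(a_1)$, we get $w_1(\Omega) = w(\Omega_1)\cup w_0(\Omega_2)\cup w_0(\Omega_3) = (a_1)$. Applying Lemma~\ref{lem:approximation} with $\ell = 1$ and $k = 1$ then gives $c_1(X_\Omega) = c_1(B(a_1))$, and by the ball formulas \eqref{eqn:ball1}--\eqref{eqn:ball2} (with $d=1$) we have $c_1(B(a_1)) = a_1 = a > 0$. Thus $0 < c_1(B(a)) = c_1(X_\Omega)$ and a symplectic embedding $B(a)\to X_\Omega$ exists, so Remark~\ref{remark:optimal} shows that $\iota$ is optimal. (Alternatively one could invoke Theorem~\ref{thm:weight} and the formula \eqref{eqn:weightcompute} directly, noting that the constraint $\sum_i (d_i^2+d_i)/2\le 1$ allows at most one $d_i$ to be nonzero.)

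Finally I would translate optimality of $\iota$ into the statement about the Gromov width. The embedding $\iota$ itself shows the Gromov width of $X_\Omega$ is at least $a$. Conversely, if $\op{int}(B(a'))$ symplectically embedded into $X_\Omega$ for some $a' > a$, then picking $a < a'' < a'$ and composing $B(a'')\hookrightarrow\op{int}(B(a'))\hookrightarrow X_\Omega$ would give a symplectic embedding from $B(a'')$, which is symplectomorphic to $(B(a), (a''/a)\omega)$ with $a''/a > 1$, contradicting optimality of $\iota$. Hence the Gromov width equals $a$. I do not anticipate any real obstacle in carrying this out: the entire content is the capacity identity $c_1(X_\Omega) = a_1$, which is an immediate consequence of Lemma~\ref{lem:approximation} (and ultimately of Theorem~\ref{thm:weight}), so the corollary should follow essentially formally.
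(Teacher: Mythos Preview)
Your argument is correct and matches the paper's proof essentially verbatim: both identify $a$ as the first weight $a_1$, apply Lemma~\ref{lem:approximation} with $\ell=1$ to get $c_1(X_\Omega)=a$, and then invoke Remark~\ref{remark:optimal}. Your write-up merely adds a few extra details (the toric justification that $B(a')\subset X_\Omega$ iff the triangle lies in $\Omega$, and the explicit translation from optimality to Gromov width) that the paper leaves implicit.
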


\begin{proof}
Note that $a$ is just the largest real number such that 
$\bigtriangleup(a)\subset \Omega$. It follows from Lemma~\ref{lem:approximation} with $\ell=1$ that $c_1(X_\Omega)=a$.  Since $c_1(B(a))=a$, we are done by Remark~\ref{remark:optimal}.
\end{proof}

Here is a simple example of obstructions to symplectic embeddings in which $X_\Omega$ is the domain rather than the target:

\begin{example}
Let $a\in(0,1)$, and let $\Omega$ be the quadrilateral with vertices $(0,0)$, $(1,0)$, $(a,1-a)$ and $(0,1+a)$. Then the inclusion $X_\Omega\subset B(1+a)$ is optimal.
\end{example}

\begin{proof}
The weight expansion is $w(\Omega)=(1,a)$. It then follows from equation \eqref{eqn:weightcompute} that $c_2(X_\Omega)=1+a$. Since $c_2(B(1+a))=1+a$, the claim follows from Remark~\ref{remark:optimal}.
\end{proof}

Another interesting example is the (nondisjoint) union of a ball and a cylinder. Given $0<a<b$, define $Z(a,b)$ to be the union of the ball $B(b)$ with the cylinder
\[
Z(a)=P(\infty,a).
\]
That is, $Z(a,b)=X_\Omega$ where $\Omega$ is bounded by the axes, the line segment from $(0,b)$ to $(b-a,a)$, and the horizontal ray extending to the right from $(b-a,a)$.

\begin{proposition}
\label{prop:bc}
The ECH capacities of the union of a ball and a cylinder are given by
\begin{equation}
\label{eqn:bc}
c_k(Z(a,b))=\max\left\{db+a\left(k-\frac{d(d+1)}{2}\right)\;\bigg|\;  d(d+1)\le 2k\right\}
\end{equation}
where $d$ is a nonnegative integer.
\end{proposition}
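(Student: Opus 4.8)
The plan is to realize $Z(a,b)$ as a concave toric domain whose weight expansion is the infinite sequence $(b,a,a,a,\dots)$ and to deduce \eqref{eqn:bc} from Theorem~\ref{thm:weight} by approximation. Write $Z(a,b)=X_\Omega$, where $\Omega$ is bounded by the coordinate axes, the segment of the line $x+y=b$ from $(0,b)$ to $(b-a,a)$, and the horizontal ray $\{y=a,\ x\ge b-a\}$. Running the construction of \S\ref{sec:weight}: since the slanted edge of $\Omega$ lies on $x+y=b$, the first step extracts $\Omega_1=\bigtriangleup(b)$, gives $\Omega_2=\emptyset$, and (after the prescribed translation by $(-b,0)$ and multiplication by $\begin{pmatrix}1&1\\0&1\end{pmatrix}$) produces for $\Omega_3$ the half-infinite strip $\{u\ge 0,\ 0\le v\le a\}$, i.e.\ the domain of the cylinder $Z(a)$. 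Iterating on the strip never terminates and each step extracts an affine copy of $\bigtriangleup(a)$. In particular $\Omega$ decomposes into countably many disjoint open triangles $T_0,T_1,T_2,\dots$ with $T_0$ affine equivalent to $\bigtriangleup(b)$ and $T_i$ affine equivalent to $\bigtriangleup(a)$ for $i\ge 1$.

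The lower bound in \eqref{eqn:bc} then follows as in the proof of Lemma~\ref{lem:weighteasy}: for each $N\ge 1$, Lemma~\ref{lem:traynortrick} applied to $T_0,\dots,T_N$ produces a symplectic embedding $\op{int}(B(b))\sqcup\coprod_{i=1}^N\op{int}(B(a))\to Z(a,b)$, so Monotonicity gives
\[
c_k(Z(a,b))\ \ge\ c_k\Bigl(B(b)\sqcup\coprod_{i=1}^N B(a)\Bigr) .
\]
For the matching upper bound I would approximate $Z(a,b)$ from inside by genuine (compact, rational) concave toric domains. For a positive integer $N$, let $\Omega^{(N)}$ be the concave toric domain bounded by the axes, the segment of $x+y=b$ from $(0,b)$ to $(b-a,a)$, and the segment of slope $-1/(N+1)$ from $(b-a,a)$ to $(b+Na,0)$. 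Then $\Omega^{(N)}$ is rational, $X_{\Omega^{(N)}}\subseteq Z(a,b)$, and the $X_{\Omega^{(N)}}$ exhaust $Z(a,b)$ as $N\to\infty$. A direct computation of the first step of the weight expansion of $\Omega^{(N)}$ gives $\Omega_1=\bigtriangleup(b)$, $\Omega_2=\emptyset$, and $\Omega_3$ equal (after the same translation and $SL_2(\Z)$ move) to the triangle with legs of lengths $a$ and $Na$; since $N$ is a positive integer, the (McDuff) weight expansion of this triangle consists of exactly $N$ copies of $a$. Hence $w(\Omega^{(N)})=(b,a,\dots,a)$ with $N$ copies of $a$, and Theorem~\ref{thm:weight} yields $c_k(X_{\Omega^{(N)}})=c_k\bigl(B(b)\sqcup\coprod_{i=1}^N B(a)\bigr)$. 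Combining the inclusions $X_{\Omega^{(N)}}\subseteq Z(a,b)$ with the continuity of ECH capacities (Lemma~\ref{lem:continuity1}) gives
\[
c_k(Z(a,b))\ =\ \lim_{N\to\infty}\,c_k\Bigl(B(b)\sqcup\coprod_{i=1}^N B(a)\Bigr).
\]

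It remains to evaluate the right-hand side and match it with \eqref{eqn:bc}. By \eqref{eqn:weightcompute}, for $N\ge k$ the quantity $c_k\bigl(B(b)\sqcup\coprod_{i=1}^N B(a)\bigr)$ is the maximum of $d_0b+a\sum_{i=1}^N d_i$ over nonnegative integers with $\tfrac{d_0^2+d_0}{2}+\sum_{i=1}^N\tfrac{d_i^2+d_i}{2}\le k$. Fixing $d_0=d$ forces $d(d+1)\le 2k$ and leaves ``budget'' $m:=k-\tfrac{d(d+1)}{2}$ for the remaining terms; since $\tfrac{e^2+e}{2}\ge e$ for every nonnegative integer $e$ (with equality exactly when $e\in\{0,1\}$), we get $\sum_{i=1}^N d_i\le m$, with equality attained by taking $m$ of the $d_i$ equal to $1$ (possible because $N\ge k\ge m$). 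Thus the maximum for fixed $d_0=d$ is $db+a\bigl(k-\tfrac{d(d+1)}{2}\bigr)$, and maximizing over $d$ with $d(d+1)\le 2k$ gives precisely \eqref{eqn:bc}. Since this value is independent of $N$ for $N\ge k$, the limit above is attained; the case $k=0$ is trivial as both sides vanish.

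The step I expect to be the main obstacle is the pair of weight-expansion computations---identifying $\Omega_3$ after the $SL_2(\Z)$ transformation (the half-infinite strip for $\Omega$, and the triangle with legs $a$ and $Na$ for $\Omega^{(N)}$)---together with checking that Lemma~\ref{lem:continuity1} legitimately applies to the noncompact domain $Z(a,b)$ exhausted by the $X_{\Omega^{(N)}}$. Everything after that is the elementary optimization of \eqref{eqn:weightcompute} carried out above.
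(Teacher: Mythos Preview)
Your argument is essentially the paper's own proof: the same quadrilateral approximants $\Omega^{(N)}$ (the paper's $\Omega_i$ with $i=N$), the same weight expansion $(b,a,\ldots,a)$, and the same optimization of \eqref{eqn:weightcompute}. The one substantive issue is exactly the one you flagged: Lemma~\ref{lem:continuity1} is stated for the Hausdorff metric on \emph{compact} subsets of $\R^2$, and its proof relies on sandwiching $\Omega_i$ between $r_i^{-1}\Omega$ and $r_i\Omega$---impossible here since no scaling of the unbounded $\Omega$ fits inside any compact $\Omega^{(N)}$. The paper resolves this by invoking instead the exhaustion property \eqref{eqn:exhaustion}, which follows directly from the definition \eqref{eqn:Xminus} of ECH capacities as a supremum over compact Liouville subdomains: since the interiors of the $X_{\Omega^{(N)}}$ exhaust $\op{int}(Z(a,b))$, one gets $c_k(Z(a,b))=\lim_{N\to\infty}c_k(X_{\Omega^{(N)}})$ without any Hausdorff-metric argument. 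With that replacement your proof is complete (and the separate lower-bound paragraph via Lemma~\ref{lem:traynortrick} becomes redundant). The weight-expansion computations you worried about are correct as written.
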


\begin{proof}
Recall from \cite[\S4.2]{qech} that for any symplectic four-manifold $(X,\omega)$, we have
\begin{equation}
\label{eqn:Xminus}
c_k(X,\omega) = \sup\left\{c_k\left(X_-,\omega|_{X_-}\right)\right\}
\end{equation}
where the supremum is over certain compact subsets $X_-\subset \op{int}(X)$ (namely those for which $(X_-,\omega|_{X_-})$ is a four-dimensional ``Liouville domain'' in the sense of \cite[\S1]{qech}).
 It follows immediately that ECH capacities have the following ``exhaustion property'': if $\{X_i\}_{i\ge 1}$ is a sequence of open subsets of $X$ with $X_i\subset X_{i+1}$ and $\bigcup_{i=1}^\infty X_i=\op{int}(X)$, then
\begin{equation}
\label{eqn:exhaustion}
c_k(X,\omega) = \lim_{i\to \infty}c_k\left(X_i,\omega|_{X_i}\right).
\end{equation}

To apply this in the present situation, given a positive integer $i$, let $\Omega_i$ be the quadrilateral with vertices $(0,0)$, $(0,b)$, $(b-a,a)$, and $(b+ia,0)$.  Then the interiors of the domains $X_{\Omega_i}$ exhaust the interior of $Z(a,b)$. Also, $X_{\Omega_i}$ has the same ECH capacities as its interior; this follows for example from \eqref{eqn:Xminus}. It then follows from the exhaustion property \eqref{eqn:exhaustion} that
\begin{equation}
\label{eqn:exhaustion2}
c_k(Z(a,b)) = \lim_{i\to \infty}c_k(X_{\Omega_i}).
\end{equation}

Assume that $i\ge k$. We now compute $c_k(X_{\Omega_i})$ using Theorem~\ref{thm:weight}. The weight expansion of $\Omega_i$ is
\begin{equation}
\label{eqn:womegai}
w(\Omega_i) = (b,\underbrace{a,\ldots,a}_{\mbox{$i$ times}}).
\end{equation}
Since $i\ge k$, to compute the maximum in \eqref{eqn:weightcompute}, we can assume that each $a$ weight in \eqref{eqn:womegai} is multiplied by $0$ or $1$, and the $b$ weight in \eqref{eqn:womegai} is multiplied by $(d^2+d)/2$ for some nonnegative integer $d$. It then follows that $c_k(X_{\Omega_i})$ equals the right hand side of \eqref{eqn:bc}.
It now follows from \eqref{eqn:exhaustion2} that \eqref{eqn:bc} holds.
\end{proof}

It is interesting to ask when the ellipsoid $E(a,b)$ symplectically embeds into $Z(c,d)$. By scaling, it is equivalent to ask, given $a,b\ge 1$, for which $\lambda>0$ there exists a symplectic embedding $E(a,1)\to Z(\lambda,\lambda b)$. Of course this trivially holds if $\lambda$ is sufficiently large that $E(a,1)$ is a subset of $Z(\lambda,\lambda b)$. In some cases this sufficient condition is also necessary:

\begin{corollary}
\label{cor:ballcylinder}
Suppose that (i) $a \in \lbrace 1,2\rbrace$ and $b\ge 1$, or (ii) $a$ is a positive integer and $1\le b\le 2$. Then there exists a symplectic embedding $E(a,1)\to Z(\lambda,\lambda b)$ if and only if $E(a,1)\subset Z(\lambda,\lambda b)$.
\end{corollary}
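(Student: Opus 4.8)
The reverse implication is immediate, since an inclusion of one symplectic manifold into another is a symplectic embedding. So the plan is to prove the forward implication: that a symplectic embedding $E(a,1)\to Z(\lambda,\lambda b)$ forces the inclusion $E(a,1)\subset Z(\lambda,\lambda b)$. The idea is to extract from a single ECH capacity exactly the numerical inequality that characterizes this inclusion.

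\emph{Step 1: when $E(a,1)$ includes into $Z(\lambda,\lambda b)$.} First I would work this out directly. Write $E(a,1)=X_T$ with $T$ the triangle with vertices $(0,0),(a,0),(0,1)$, and $Z(\lambda,\lambda b)=X_\Omega$ with $\Omega$ the union, inside the first quadrant, of the strip $\{\mu_2\le\lambda\}$ and the triangle $\{\mu_1+\mu_2\le\lambda b\}$. The part of $T$ with $\mu_2\le\lambda$ always lies in the cylindrical part of $\Omega$, so $T\subset\Omega$ if and only if the part of $T$ with $\mu_2\ge\lambda$ lies in $\{\mu_1+\mu_2\le\lambda b\}$; for $\lambda<1$ this part is the triangle with vertices $(0,1),(0,\lambda),(a(1-\lambda),\lambda)$, and testing the three vertices and using $(a-1)(b-1)\ge 0$ reduces the condition to the single inequality $\lambda\ge a/(a+b-1)$. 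For $\lambda\ge 1$ the inclusion holds automatically, as does $\lambda\ge a/(a+b-1)$ (since $a/(a+b-1)\le 1$ when $b\ge 1$). Hence $E(a,1)\subset Z(\lambda,\lambda b)$ if and only if $\lambda\ge a/(a+b-1)$.

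\emph{Step 2: the ECH obstruction at index $a$.} Since $a$ is a positive integer, the Monotonicity property applied to a symplectic embedding $E(a,1)\to Z(\lambda,\lambda b)$ gives $c_a(E(a,1))\le c_a(Z(\lambda,\lambda b))$. By the Ellipsoid property, $c_a(E(a,1))=N(a,1)_a=a$, because the nondecreasing list of nonnegative integer combinations of $a$ and $1$ begins $0,1,\ldots,a-1,a,a,\ldots$. By Proposition~\ref{prop:bc}, applied with cylinder data $(\lambda,\lambda b)$,
\[
c_a(Z(\lambda,\lambda b))=\lambda\cdot\max\left\{\, db+a-\frac{d(d+1)}{2}\ :\ d\in\Z_{\ge 0},\ d(d+1)\le 2a\,\right\}.
\]
The plan is to show this maximum equals $a+b-1$, attained at $d=1$. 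The $d=0$ term is $a\le a+b-1$ since $b\ge 1$; in case (i) the constraint $d(d+1)\le 2a\le 4$ forces $d\le 1$, so we are done; in case (ii), where $b\le 2$, one checks that $d\mapsto db+a-d(d+1)/2$ drops by $(d+1)-b\ge 0$ going from $d$ to $d+1$ whenever $d\ge 1$, hence is nonincreasing on $\{1,2,\ldots\}$, so the maximum is again at $d=1$. Thus $c_a(Z(\lambda,\lambda b))=\lambda(a+b-1)$, so $a\le\lambda(a+b-1)$, i.e.\ $\lambda\ge a/(a+b-1)$, and by Step~1 this means $E(a,1)\subset Z(\lambda,\lambda b)$, completing the proof.

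This is a short deduction, and the only point needing care is the case analysis in Step~2 showing that the maximum in Proposition~\ref{prop:bc} is attained at $d=1$: this is exactly where the hypotheses $a\le 2$ (case (i)) and $b\le 2$ (case (ii)) are used, and it is consistent with the expectation that the conclusion should fail for $a\ge 3$ and $b>2$, where a larger $d$ gives a bigger value and hence a weaker bound than $a/(a+b-1)$. The degenerate case $b=1$, where $Z(\lambda,\lambda)$ is the cylinder $Z(\lambda)$, can be handled in exactly the same way (the formula of Proposition~\ref{prop:bc} continues to apply, with the maximum again at $d=1$, giving $c_a=\lambda a$) or verified directly.
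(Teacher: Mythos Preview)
Your proof is correct and follows essentially the same approach as the paper's own proof: reduce the inclusion condition to the inequality $\lambda\ge a/(a+b-1)$, and then establish this inequality via Monotonicity by computing $c_a(E(a,1))=a$ from the Ellipsoid property and $c_a(Z(\lambda,\lambda b))=\lambda(a+b-1)$ from Proposition~\ref{prop:bc}, the latter because the maximum in \eqref{eqn:bc} is attained at $d=1$. Your version is in fact more explicit than the paper's, since you spell out both the derivation of \eqref{eqn:lambda1} and the case analysis showing why $d=1$ is optimal under hypotheses (i) and (ii), and you also note the degenerate case $b=1$.
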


\begin{proof}
We first compute that $E(a,1)\subset Z(\lambda,\lambda b)$ if and only if
\begin{equation}
\label{eqn:lambda1}
\lambda \ge \frac{a}{a+b-1}.
\end{equation}
Assuming (i) or (ii), we need to show that if there exists a symplectic embedding $E(a,1)\to Z(\lambda,\lambda b)$, then the inequality \eqref{eqn:lambda1} holds. By the Monotonicity and Conformality properties of ECH capacities, it will suffice to show that
\begin{align}
\label{eqn:cae}
c_a(E(a,1)) &= a,\\
\label{eqn:cab}
c_a(Z(1,b))&= a+b-1.
\end{align}
Now \eqref{eqn:cae} holds for any positive integer $a$ by the Ellipsoid property.  And in both cases (i) and (ii), equation \eqref{eqn:cab} follows from Proposition~\ref{prop:bc}, because the maximum in \eqref{eqn:bc} is realized by $d=1$.
\end{proof}

\begin{remark}
There are many cases in which an ellipsoid $E(a,1)$ symplectically embeds into $Z(\lambda,\lambda b)$ although $E(a,1)$ is not a subset of $Z(\lambda,\lambda b)$. 
 For example, an ellipsoid $E(a,1)$ may embed into a ball $B(c)$ of slightly greater volume, and this is always possible when $a\ge (17/6)^2$, see \cite{MS}; if we set $c=\lambda b$, then the ellipsoid is not a subset of $Z(\lambda,\lambda b)$ if we choose $b$ sufficiently large.  Moreover, the ``symplectic folding" method from \cite{s} can be used to construct examples of symplectic embeddings $E(a,1)\to Z(\lambda,\lambda b)$ where $E(a,1)\not\subset Z(\lambda,\lambda b)$ and also $\op{vol}(E(a,1))>\op{vol}(B(\lambda b))$, so that $E(a,1)$ does not symplectically embed into the ball $B(\lambda b)$ alone. 
\end{remark}

Corollary~\ref{cor:ballcylinder} also has a generalization to symplectic embeddings of an ellipsoid into the union of an ellipsoid and a cylinder, see \S\ref{sec:oee}.

\subsection{Application to ball packings}
\label{sec:packingintro}

As a more involved application, we obtain a sharp obstruction to ball packings of the union of certain unions of a cylinder and an ellipsoid. Given positive real numbers $a, b$ and $c$ with $c > a$, define 
\[
Z(a,b,c)=Z(a) \cup E(b,c).
\]   

\begin{theorem}
\label{thm:packing}
Let $b,c$ and $w_1\ge w_2\ge \cdots \ge w_n>0$ be positive real numbers.  Assume that $c > 1$ and $b\le \frac{c}{c-1}$.  Then there exists a symplectic embedding   
\[
\coprod_{i=1}^{n} \op{int}(B\left(w_{i}\right)) \to Z(\lambda,\lambda b,\lambda c)
\]
if and only if 
\[
\lambda \ge \max\{w_1/c,\lambda_1,\ldots,\lambda_n\},
\]
where we define
\begin{equation}
\label{eqn:lambdak}
\lambda_k = \frac{\sum_{i=1}^{k}w_{i}}{k+\frac{b(c-1)}{c}}.
\end{equation}     
\end{theorem}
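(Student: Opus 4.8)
Here is a plan for proving Theorem~\ref{thm:packing}.

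\medskip
\noindent\emph{Overview.} The plan is to prove the ``only if'' direction from ECH capacities together with Gromov non-squeezing, and the ``if'' direction by an explicit triangle–packing construction. Throughout, write $\beta=\frac{b(c-1)}{c}$, so the hypothesis $b\le\frac{c}{c-1}$ says exactly that $\beta\le 1$, and note $\lambda_1=\frac{w_1}{1+\beta}$.

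\medskip
\noindent\emph{Necessity.} First I would compute the ECH capacities of $Z(1,b,c)$. Exactly as in the proof of Proposition~\ref{prop:bc}, write $Z(1,b,c)$ as an increasing union of rational concave toric domains $X_{\Omega_i}$ truncating the cylinder, so that $c_k(Z(1,b,c))=\lim_{i\to\infty}c_k(X_{\Omega_i})$. Running the weight expansion of $\Omega_i$: after extracting the maximal triangle, the nontrivial leftover pieces are affine equivalent to a small ellipsoid and to a long thin triangle, so that $w(\Omega_i)$ contains finitely many weights that are $\le 1$ together with arbitrarily many (as $i\to\infty$) weights equal to $1$. Using $\beta\le 1$, one checks via Remark~\ref{rmk:approximation} that for every $k\ge 1$
\[
c_k(Z(1,b,c))\;\le\;k+\beta,
\]
with equality when $b\le c$; hence by Conformality $c_k(Z(\lambda,\lambda b,\lambda c))\le\lambda(k+\beta)$. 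Now suppose a symplectic embedding $\coprod_{i=1}^{n}\op{int}(B(w_i))\to Z(\lambda,\lambda b,\lambda c)$ exists. For $1\le k\le n$, Monotonicity and the Disjoint Union formula give $c_k\bigl(\coprod_{i=1}^{n}B(w_i)\bigr)\le\lambda(k+\beta)$, and since $c_k\bigl(\coprod_{i=1}^{n}B(w_i)\bigr)\ge\sum_{i=1}^{k}w_i$ (take $d_i=1$ for $i\le k$ in \eqref{eqn:weightcompute}) this yields $\lambda\ge\lambda_k$. Finally, since $Z(\lambda,\lambda b,\lambda c)\subset P(\infty,\lambda c)$, the embedding restricts to an embedding $\op{int}(B(w_1))\to P(\infty,\lambda c)$, and Gromov non-squeezing gives $w_1\le\lambda c$, i.e.\ $\lambda\ge w_1/c$.

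\medskip
\noindent\emph{Sufficiency.} Assume $\lambda\ge\max\{w_1/c,\lambda_1,\dots,\lambda_n\}$; we must build the embedding. Let $\Omega$ be the domain with $X_\Omega=Z(1,b,c)$, so $Z(\lambda,\lambda b,\lambda c)=X_{\lambda\Omega}$, and decompose $\lambda\Omega$ into its ``cylinder part'' $\{0\le y\le\lambda\}$ and its ``bump'' $\lambda\Omega\cap\{y>\lambda\}$, which is a triangle of width $\lambda\beta$ along $y=\lambda$ tapering to $0$ at $y=\lambda c$. By the Traynor trick (Lemma~\ref{lem:traynortrick}) and the argument of Lemma~\ref{lem:weighteasy}, it is enough to place pairwise interior‑disjoint open triangles $T_1,\dots,T_n\subset\lambda\Omega$ with $T_i$ affine equivalent to $\bigtriangleup(w_i)$. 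Any $SL_2(\Z)$–image of $\bigtriangleup(w_i)$ in the first quadrant has horizontal and vertical extent $\ge w_i$; in particular, whenever $w_i\le\lambda$ the triangle $\bigtriangleup(w_i)$ fits into the cylinder part, and one can fit arbitrarily many such triangles there disjointly by translating in the $x$–direction (as in the proof of Proposition~\ref{prop:bc}). So the real content is to place the ``large'' triangles $T_i$ with $w_i>\lambda$ using only a bounded portion of the cylinder. I would do this greedily in order of decreasing size: each large $T_i$ should occupy a $\lambda$–thick slab of the cylinder together with a chunk of the bump of width exactly $w_i-\lambda$. The hypothesis $\lambda\ge\lambda_k$ is precisely the inequality $\sum_{i=1}^{k}(w_i-\lambda)\le\lambda\beta$, and this is self‑propagating: after placing $T_1,\dots,T_j$ the residual bump width is $\lambda\beta-\sum_{i\le j}(w_i-\lambda)$, and the remaining constraints $\sum_{i=j+1}^{k}(w_i-\lambda)\le\lambda\beta-\sum_{i\le j}(w_i-\lambda)$ are again equivalent to $\lambda\ge\lambda_k$, so the ``bump budget'' never runs out. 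The base case is the elementary observation that $\bigtriangleup(w_1)$, placed at the origin, lies in $\lambda\Omega$ iff $w_1\le\lambda\min\{c,1+\beta\}$, which holds because $\lambda\ge w_1/c$ and $\lambda\ge\lambda_1$.

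\medskip
\noindent\emph{The main obstacle} is making this inductive geometric step precise: after placing the first few large triangles, the leftover region is no longer of the form $\lambda'\Omega'$ for a simple domain $\Omega'$, so one must choose the placements (possibly shearing each triangle by an element of $SL_2(\Z)$) so that the leftover region, up to affine equivalence and after deleting a half–infinite cylinder tail, is again a cylinder with a smaller triangular bump. Two alternative routes to sufficiency would be: (b) reduce to a single ellipsoid embedding by exhibiting an ellipsoid $E$ with $\coprod_{i=1}^{n}\op{int}(B(w_i))\to E$ and $\op{int}(E)\to Z(\lambda,\lambda b,\lambda c)$, using a sharp embedding criterion for ellipsoids into concave toric domains together with the non‑squeezing constraint $w_1\le\lambda c$; or (c) accommodate the large balls along the ellipsoidal bump using multiple symplectic folding \cite{s}. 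I expect route~(a), with the bump‑budget bookkeeping above, to be the cleanest.
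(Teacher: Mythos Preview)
Your necessity argument is the paper's: the bound $c_k(Z(1,b,c))\le k+\beta$ for all $k\ge 1$ (under $\beta\le 1$) is exactly what Lemma~\ref{lem:capacitycomputation} and Remark~\ref{rmk:alebc} combine to give, and the paper then extracts $\lambda\ge\lambda_k$ from $c_k\bigl(\coprod_i B(w_i)\bigr)\ge\sum_{i\le k}w_i$ just as you do. For $\lambda\ge w_1/c$ the paper uses $c_1(Z(1,b,c))=c$ from Corollary~\ref{cor:gromovwidth} rather than Gromov non-squeezing, but this is the same obstruction packaged differently.

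For sufficiency you have the right reduction (open triangles $T_i$ affine equivalent to $\bigtriangleup(w_i)$ inside $\lambda\Omega$) and the right governing inequality $\sum_{i\le k}(w_i-\lambda)\le\lambda\beta$, but the placement you describe is the gap, and your inductive scheme hits exactly the difficulty you flag: after removing a piece of the triangular bump, the remainder is not a smaller domain of the same shape, so the recursion does not close up. The paper sidesteps induction entirely. First fix $k$ maximizing $\lambda_k$; a one-line computation gives $\lambda_k-w_{k+1}=(k+1+\beta)(\lambda_k-\lambda_{k+1})\ge 0$, so $w_i\le\lambda$ for all $i>k$ and those triangles go into the cylinder tail as you say. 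For $1\le\ell\le k$, place the large triangles \emph{all at once}: let $T_\ell$ be the image of $\bigtriangleup(w_\ell)$ under the shear $\begin{pmatrix}1&-(\ell-1)\\0&1\end{pmatrix}$ followed by translation right by $\sum_{i<\ell}w_i$. The point of this particular shear is that the right edge of $T_\ell$ has slope $-1/\ell$ and contains the left edge of $T_{\ell+1}$, so the triangles fan out with shared edges and the upper boundary of their union is a single convex polygonal path from $(0,w_1)$ to $\bigl(\sum_{i\le k}w_i,0\bigr)$. Verifying that this convex path stays beneath $\partial(\lambda\Omega)$ then reduces to two points: the initial point (forcing $\lambda\ge w_1/c$) and the point at height $\lambda$, whose $x$-coordinate is $\sum_{i\le k}(w_i-\lambda)\le\lambda\beta$. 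That is your bump-budget inequality, but used once globally instead of being fed through an induction.
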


For example, Theorem~\ref{thm:packing} gives a sharp obstruction to embedding a disjoint union of balls into the union of a ball and a cylinder, $Z(a,b)=Z(a,b,b)$, as long as $b \le 2a$. 
 
The outline of the proof of Theorem~\ref{thm:packing} is as follows. In \S\ref{sec:cbp}, we will give a symplectic embedding construction to prove:

\begin{proposition}
\label{prop:packing}
Let $b,c$ and $w_1\ge w_2\ge \cdots \ge w_n>0$ be positive real numbers.  Assume that $c > 1$.  Define $\lambda_k$ by \eqref{eqn:lambdak}. If 
\[
\lambda \ge \max\{w_1/c,\lambda_1,\ldots,\lambda_n\},
\]
then there exists a symplectic embedding
\begin{equation}
\label{eqn:shearing}   
\coprod_{i=1}^{n} \op{int}(B\left(w_{i}\right)) \to Z(\lambda, \lambda b,\lambda c).
\end{equation}
\end{proposition}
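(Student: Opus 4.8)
The plan is to reduce Proposition~\ref{prop:packing} to a two-dimensional packing problem and then solve it with an explicit ``staircase'' decomposition built from the ``Traynor trick'' (Lemma~\ref{lem:traynortrick}). First I would normalize: since $Z(\lambda,\lambda b,\lambda c)\subseteq Z(\lambda',\lambda' b,\lambda' c)$ whenever $0<\lambda\le\lambda'$, it suffices to treat the boundary case $\lambda=\max\{w_1/c,\lambda_1,\dots,\lambda_n\}$, and after rescaling by $1/\lambda$ I may assume $\lambda=1$. Writing $\beta=b(c-1)/c$, the hypothesis $\lambda\ge\lambda_k$ then becomes $\sum_{i=1}^k w_i\le k+\beta$ for every $k$, while $\lambda\ge w_1/c$ becomes $w_1\le c$. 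Now Lemma~\ref{lem:traynortrick} reduces the problem to the following: find pairwise disjoint open subsets $T_1,\dots,T_n$ of the planar region $\Omega$ defining $Z(1,b,c)$ --- the region bounded by the two coordinate axes, the segment from $(0,c)$ to $(\beta,1)$, and the horizontal ray $\{\,y=1,\ x\ge\beta\,\}$ --- with each $T_i$ affine equivalent to the standard open triangle $\bigtriangleup(w_i)$; indeed, given such $T_i$, Lemma~\ref{lem:traynortrick} provides symplectic embeddings $\op{int}(B(w_i))\hookrightarrow X_{T_i}\subset X_\Omega$ with disjoint images.

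To construct the $T_i$, order the weights so that $w_1\ge\cdots\ge w_n$ and let $m$ be the number of them that exceed $1$. The ``small'' triangles $\bigtriangleup(w_i)$ with $w_i\le 1$ are easy to place: take disjoint translates with one leg on the $x$-axis, marching off to the right inside the half-infinite strip $\{\,0<y<1\,\}$. For the ``large'' weights $i=1,\dots,m$ I would build a staircase descending along the slanted edge of $\Omega$: start with $T_1=\bigtriangleup(w_1)$, placed with its vertical leg on $\{0\}\times(0,w_1)$ --- legal since $w_1\le c$ --- and then inductively obtain $T_i$ from $\bigtriangleup(w_i)$ by a translation followed by a shear in $SL_2(\Z)$ (the matrices $\left(\begin{smallmatrix}1&0\\k&1\end{smallmatrix}\right)$ already used in the weight-expansion construction of \S\ref{sec:weight}), positioned just below and to the right of $T_{i-1}$ and still under the slanted edge. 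The key point is that this staircase can be arranged to stay inside $\Omega$ --- at every stage, not merely the last --- precisely when $\sum_{i=1}^k w_i\le k+\beta$ for all $k\le m$: heuristically, for each $k$ the first $k$ large balls collectively overflow the strip $\{\,y\le1\,\}$ by an amount controlled by $\sum_{i=1}^k(w_i-1)$, and the room available for that overflow inside the ellipsoidal part of $\Omega$ is measured by $\beta$. Placing the large staircase at the left and the small triangles further to the right in the strip, checking pairwise disjointness, and applying Lemma~\ref{lem:traynortrick} to each $T_i$ then yields the embedding \eqref{eqn:shearing}.

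The main obstacle is the bookkeeping for this staircase. One must choose the translations and shears so that consecutive large triangles do not overlap, so that no large triangle crosses the slanted edge $x/b+y/c=1$, and so that the whole configuration fits within the ``budget'' $\beta$ --- and these constraints must hold simultaneously for every $k=1,\dots,m$, since any one of the partial sums $\sum_{i=1}^k w_i$ (not just the total) could be the binding one. Arranging that the construction succeeds exactly under these inequalities --- so that the resulting embedding is sharp, as needed for Theorem~\ref{thm:packing} --- is the technical heart of the argument. By contrast, the normalization step, the packing of small balls in the strip, the transition between the ellipsoidal region and the strip, and the final appeal to Lemma~\ref{lem:traynortrick} are routine, the latter ones modulo standard perturbations near the axes and corners (shrink each $w_i$ by $\varepsilon$ and pass to the limit when a hypothesis holds with equality).
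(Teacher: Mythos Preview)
Your overall strategy coincides with the paper's: reduce to a planar packing via Lemma~\ref{lem:traynortrick}, place the small weights as translates in the infinite strip, and build a sheared ``staircase'' for the large weights. Two points where the paper's execution differs from yours are worth noting.

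First, the split. The paper does not split at $m=\#\{i:w_i>\lambda\}$; it chooses $k$ to be the index maximizing $\lambda_k$, and shows as a preliminary step that $w_i\le\lambda_k\le\lambda$ for every $i>k$. Your split also works, but the paper's choice makes the verification slightly cleaner because only the single inequality $\lambda\ge\lambda_k$ is used for the staircase, not all of them.

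Second, and more substantively, the shear. The paper uses \emph{horizontal} shears $\left(\begin{smallmatrix}1&-(\ell-1)\\0&1\end{smallmatrix}\right)$ followed by a rightward translation by $\sum_{i<\ell}w_i$, so that every triangle $T_\ell$ sits with a base on the $x$-axis; the right edge of $T_\ell$ has slope $-1/\ell$ and contains the left edge of $T_{\ell+1}$. The upper boundary of $\bigcup_{\ell\le k}\overline{T_\ell}$ is then the graph of a single convex function, so checking containment in $\Omega$ reduces to two points: the initial endpoint $(0,w_1)$ (controlled by $\lambda\ge w_1/c$) and the point where the graph crosses height $\lambda$ (controlled by $\lambda\ge\lambda_k$). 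Convexity handles everything in between, so the ``bookkeeping obstacle'' you anticipated dissolves entirely. By contrast, the vertical shears $\left(\begin{smallmatrix}1&0\\k&1\end{smallmatrix}\right)$ you propose send $\bigtriangleup(w_i)$ to a triangle with vertices $(0,0),(w_i,kw_i),(0,w_i)$, which points upward and does not fit under the slanted edge of $\Omega$ in the way you describe; that particular matrix does not realize the construction.
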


This implies the sufficient condition for ball packings in Theorem~\ref{thm:packing}. We will then use ECH capacities to prove the necessary condition for ball packings in Theorem~\ref{thm:packing}.

\begin{remark}
Unlike Theorem~\ref{thm:packing}, Proposition~\ref{prop:packing} still holds when $b> \frac{c}{c-1}$, but in this case we generally do not know whether better symplectic embeddings are possible. For example, Proposition~\ref{prop:packing} implies that one can symplectically embed three equal balls $\op{int}(B(a))$ into $Z(1,3)$ whenever $a\le 5/3$. However ECH capacities only tell us that if such an embedding exists then $a\le 2$. 
\end{remark}

\subsection{ECH capacities and lattice points}
\label{sec:path}

We now give a different formula for the ECH capacities of a concave toric domain, which is not assumed to be rational. This formula requires the following definitions.

\begin{definition}
A {\em concave integral path\/} is a polygonal path in the plane, whose vertices are at lattice points, and which is the graph of a convex piecewise linear function $F:[0,B]\to[0,A]$ for some nonnegative integers $A,B$.
\end{definition}

\begin{definition}
If $\Lambda$ is a concave integral path, define $\mc{L}(\Lambda)$ to be the number of lattice points in the region bounded by $\Lambda$, the line segment from $(0,0)$ to $(0,B)$, and the line segment from $(0,0)$ to $(A,0)$, not including lattice points on $\Lambda$ itself.
\end{definition}

\begin{definition}
If $X_\Omega$ is the concave toric domain determined by $f:[0,b]\to[a,0]$, and if $\Lambda$ is a concave integral path, define the {\em $\Omega$-length\/} of $\Lambda$, denoted by $\ell_\Omega(\Lambda)$, as follows. For each edge $e$ of $\Lambda$, let $v_e$ denote the vector determined by $e$, namely the difference between the right and left endpoints. Let $p_e$ be a point on the graph of $f$ such that the graph of $f$ is contained in the closed half-plane above the line through $p_e$ parallel to $e$. Then
\begin{equation}
\label{eqn:ellomega}
\ell_\Omega(\Lambda) = \sum_{e\in\op{Edges}(\Lambda)} v_e \times p_e.
\end{equation}
Here  $\times$ denotes the cross product. Note that $p_e$ fails to be unique only when the graph of $f$ contains an edge parallel to $e$, in which case $v_e \times p_e$ does not depend on the choice of $p_e$.
\end{definition}

\begin{theorem}
\label{thm:path}
If $X_\Omega$ is any concave toric domain, then its ECH capacities are given by
\begin{equation}
\label{eqn:path}
c_k(X_\Omega) = \max\{\ell_\Omega(\Lambda) \mid \mc{L}(\Lambda) = k\}.
\end{equation}
Here the maximum is over concave integral paths $\Lambda$.
\end{theorem}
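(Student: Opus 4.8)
The plan is to deduce Theorem~\ref{thm:path} from Theorem~\ref{thm:weight}. I would first reduce to rational concave toric domains: given an arbitrary $X_\Omega$, choose rational concave toric domains $X_{\Omega^{(j)}}$ whose interiors increase to $\op{int}(X_\Omega)$, so that the exhaustion property \eqref{eqn:exhaustion} (applicable since each $X_{\Omega^{(j)}}$ has the same ECH capacities as its interior) gives $c_k(X_\Omega)=\lim_j c_k(X_{\Omega^{(j)}})$. On the other side, $\ell_{\Omega^{(j)}}(\Lambda)\to\ell_\Omega(\Lambda)$ for each fixed concave integral path $\Lambda$, and for fixed $k$ there are only finitely many concave integral paths with $\mc{L}(\Lambda)=k$ — indeed $\mc{L}(\Lambda)\ge A+B-1$ for a path from $(0,A)$ to $(B,0)$, so such a $\Lambda$ lies in $[0,k+1]^2$ — hence the right-hand side of \eqref{eqn:path} is continuous in $\Omega$. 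So it is enough to prove \eqref{eqn:path} when $X_\Omega$ is rational.

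For rational $X_\Omega$ with weight expansion $w(\Omega)=(a_1,\dots,a_n)$, Theorem~\ref{thm:weight} and Remark~\ref{rmk:approximation} turn \eqref{eqn:path} into the purely combinatorial identity
\begin{equation}
\label{eqn:starpath}
\max\{\ell_\Omega(\Lambda)\mid \mc{L}(\Lambda)=k\} \;=\; \max\left\{\sum_{i=1}^n a_id_i \;\bigg|\; \sum_{i=1}^n\frac{d_i^2+d_i}{2}\le k\right\},
\end{equation}
where on the right $d_1,\dots,d_n$ range over nonnegative integers. I would prove \eqref{eqn:starpath} by induction on the number of edges of the upper boundary of $\Omega$. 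In the base case $\Omega=\bigtriangleup(a)$, so $X_\Omega=B(a)$ and $w(\Omega)=(a)$, and the right side of \eqref{eqn:starpath} is $ad$ with $d$ as in \eqref{eqn:ball2}. A short cross-product computation shows that for $\Omega=\bigtriangleup(a)$ and any concave integral path $\Lambda$ whose edges have vectors $(x_e,-y_e)$, one has $\ell_\Omega(\Lambda)=a\sum_e\min(x_e,y_e)$. Writing $m=\sum_e\min(x_e,y_e)$, one checks $\mc{L}(\Lambda)\ge\frac{m^2+m}{2}$ (with equality for the straight segment from $(0,m)$ to $(m,0)$), so $\mc{L}(\Lambda)=k\le\frac{d^2+3d}{2}$ forces $m\le d$; and conversely, for each $k$ with $\frac{d^2+d}{2}\le k\le\frac{d^2+3d}{2}$ one can perturb the straight segment to enclose exactly $k$ lattice points without increasing $\sum_e\min(x_e,y_e)$. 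This gives \eqref{eqn:starpath} for triangles.

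For the inductive step I would use the decomposition $\Omega=\bigtriangleup(a)\cup\Omega_2'\cup\Omega_3'$ from \S\ref{sec:weight}, where $\Omega_2,\Omega_3$ — the images of $\Omega_2',\Omega_3'$ under a translation and an element of $SL_2(\Z)$ — are rational concave toric domains with strictly fewer edges in their upper boundaries. On the algebraic side, $w(\Omega)=(a)\cup w(\Omega_2)\cup w(\Omega_3)$, so the Disjoint Union property writes the right side of \eqref{eqn:starpath} for $\Omega$ as the maximum over $k=k_0+k_2+k_3$ of the corresponding right sides for $\bigtriangleup(a)$, $\Omega_2$, and $\Omega_3$. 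On the geometric side, one seeks a matching decomposition of the left sides: a concave integral path $\Lambda$ should correspond, via cutting along the lines $x+y=a$, $x=x_2$, $x=x_3$ and applying the translations and $SL_2(\Z)$-matrices defining $\Omega_2,\Omega_3$, to concave integral paths $\Lambda_1$ (for $\bigtriangleup(a)$), $\Lambda_2$ (for $\Omega_2$), $\Lambda_3$ (for $\Omega_3$) in such a way that $\mc{L}$ and $\ell_\Omega$ are additive, using invariance of lattice-point counts under $SL_2(\Z)$ and translation and the identity $(Mu)\times(Mv)=u\times v$ for $M\in SL_2(\Z)$; this is essentially a lattice-point reformulation of McDuff's analysis of weight expansions. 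Granting such a correspondence, \eqref{eqn:starpath} for $\Omega$ follows from \eqref{eqn:starpath} for $\Omega_2,\Omega_3$ together with the Disjoint Union formula, completing the induction.

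The main obstacle is precisely this correspondence in the inductive step. Making it precise requires carefully handling several subtleties: the cutting lines $x+y=a$, $x=x_2$, $x=x_3$ are in general not at lattice points, so the decomposition of $\Lambda$ must be engineered to still produce honest integral paths; the translations involved do not preserve cross products, so $\ell_\Omega$ only decomposes up to correction terms, and one must check that these cancel against the corresponding corrections on the ball-packing side; and one must keep exact track of lattice points lying on the cutting lines to get the $\frac{d^2+d}{2}$ constants right. (The base-case estimate $\mc{L}(\Lambda)\ge\frac{m^2+m}{2}$ likewise needs a short proof.) As an alternative that avoids Theorem~\ref{thm:weight} entirely, one could instead compute the embedded contact homology of the contact manifold $\partial X_\Omega$ directly, identifying its generators with concave integral paths, their symplectic actions with $\ell_\Omega$, and their ECH gradings with $2\mc{L}$, and then read off \eqref{eqn:path} from the definition of ECH capacities; but the combinatorial route should be the shorter one now that Theorem~\ref{thm:weight} is in hand.
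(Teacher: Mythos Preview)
Your plan differs from the paper's in an essential way, and there is a circularity you have not flagged. In the paper, Theorem~\ref{thm:weight} is \emph{not} proved independently of Theorem~\ref{thm:path}: both are deduced from the single chain
\[
c_k\!\left(\coprod_i B(a_i)\right)\;\le\; c_k(X_\Omega)\;\le\;\max\{\ell_\Omega(\Lambda)\mid\mc{L}(\Lambda)=k\}\;\le\; c_k\!\left(\coprod_i B(a_i)\right),
\]
whose three links are Lemma~\ref{lem:weighteasy}, Lemma~\ref{lem:pathub}, and Lemma~\ref{lem:rationallb}. The middle inequality is precisely the direct ECH analysis of $\partial X_\Omega$ that you call the ``alternative'': one perturbs the Morse--Bott contact form, identifies ECH chain complex generators with labeled concave integral paths (plus multiplicities $m,n$ for the two short orbits), and verifies that the ECH index equals $2\mc{L}$ and the action equals $\ell_\Omega$. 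So citing Theorem~\ref{thm:weight} does not sidestep the ECH computation---in this paper that computation is what proves Theorem~\ref{thm:weight}.

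Setting the circularity aside, half of your combinatorial program is already in the paper and half is not. Your continuity reduction matches Lemmas~\ref{lem:continuity1}--\ref{lem:continuity2}, and your inductive \emph{decomposition} of a path $\Lambda$ into pieces $\Lambda_1,\Lambda_2,\Lambda_3$ is exactly Lemma~\ref{lem:rationallb}; note, though, that the paper cuts along the line $x+y=A$ for the integer $A$ determined by $\Lambda$ itself, not along the $\Omega$-line $x+y=a$ as you write---this is why integrality is automatic, and the translation corrections you worry about cancel cleanly (equations~\eqref{eqn:left}--\eqref{eqn:right}). The reverse \emph{gluing}---assembling optimal $\Lambda_1,\Lambda_2,\Lambda_3$ into a single $\Lambda$ with the correct $\mc{L}$ and $\ell_\Omega$---is not carried out in the paper and is the genuine gap in your route. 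It is not simply the inverse of the decomposition: the optimal $\Lambda_1$ for the ball with a prescribed $\mc{L}(\Lambda_1)=k_1$ is typically not a slope-$(-1)$ segment, and one must further arrange that the transformed $\Lambda_2',\Lambda_3'$ land on a common diagonal so that the concatenation is globally convex. If you can close this gap you obtain a purely combinatorial proof of~\eqref{eqn:starpath}, and hence a reduction of Theorem~\ref{thm:path} to any \emph{independent} proof of Theorem~\ref{thm:weight}---but none is offered here.
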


\begin{remark}
It is interesting to compare Theorem~\ref{thm:path} with the formula for the ECH capacities of {\em convex\/} toric domains in \cite[Thm.\ 4.14]{bn}, in which one {\em minimizes\/} a length function over convex paths enclosing a certain number of lattice points.
\end{remark}

\begin{example}
\label{ex:checkell}
Let us check that Theorem~\ref{thm:path} correctly recovers $c_k(X_\Omega)$ when $\Omega$ is the triangle with vertices $(0,0)$, $(a,0)$ and $(0,b)$, so that $X_\Omega = E(a,b)$.

An equivalent statement of the Ellipsoid property is that $c_k(E(a,b))=L_k$ where $L_k$ is the smallest nonnegative real number such that triangle bounded by the axes and the line $bx+ay=L_k$ encloses at least $k+1$ lattice points. Call this triangle $T_k$, and call its upper edge $D_k$.

To see that $L_k$ agrees with the right hand side of \eqref{eqn:path}, suppose first that $a/b$ is irrational.  There is then a unique lattice point $(x_k,y_k)$ on $D_k$. We need to show that
\begin{equation}
\label{eqn:checkell}
\max\{\ell_\Omega(\Lambda) \mid \mc{L}(\Lambda) = k\} = bx_k+ay_k.
\end{equation}
If $\Lambda$ is a concave integral path, there is a unique vertex $(x,y)\in\Lambda$ such that $\Lambda$ is contained in the closed half-plane above the line through $(x,y)$ with slope $-b/a$. Then $p_e=(0,b)$ for all edges to the left of $(x,y)$, and $p_e=(a,0)$ for all edges to the right of $(x,y)$. Therefore
\[
\ell_\Omega(\Lambda) = bx + ay.
\]
If $\mc{L}(\Lambda)\le k$, then we must have $bx+ay\le bx_k+ay_k$, since otherwise every lattice point in $T_k$ would be counted by $\mc{L}(\Lambda)$. Thus the left hand side of \eqref{eqn:checkell} is less than or equal to the right hand side. To prove the reverse inequality, observe that if $\Lambda$ is the minimal concave integral path which contains the point $(x_k,y_k)$ and is contained in the closed half-plane above the line $D_k$, then $(x,y)=(x_k,y_k)$ and $\mc{L}(\Lambda)=k$.

Suppose now that $a/b$ is rational. Then $D_k$ may contain more than one lattice point. If $\Lambda$ is a concave integral path, then there is a unique pair of (possibly equal) vertices $(x,y),(x',y')\in\Lambda$ with $x\le x'$ such that line segment from $(x,y)$ to $(x',y')$ is contained in
$\Lambda$, and the rest of $\Lambda$ is strictly above the line through $(x,y)$ with slope $-b/a$. Now if $p$ is any point on the upper edge of $\Omega$, then we have
\[
\ell_\Omega(\Lambda) = bx + (x'-x,y'-y)\times p + ay'.
\]
We can choose $p=(a,0)$ for convenience, and this gives
\[
\ell_\Omega(\Lambda) = bx + ay.
\]
The rest of the argument in this case is similar to the previous case.

One can also deduce the case when $a/b$ is rational from the case when $a/b$ is irrational by a continuity argument using Lemma~\ref{lem:continuity2} below.
\end{example}

\subsection{The rest of the paper}

Theorems~\ref{thm:weight} and \ref{thm:path}, which compute the ECH capacities of concave toric domains, are proved in \S\ref{sec:lb} and \S\ref{sec:ub}. The generalization of Corollary~\ref{cor:ballcylinder} to symplectic embeddings of an ellipsoid into the union of an ellipsoid and a cylinder is given in \S\ref{sec:oee}.  Proposition~\ref{prop:packing} and Theorem~\ref{thm:packing} on ball packings of the union of an ellipsoid and a cylinder are proved in \S\ref{sec:cbp} and \S\ref{sec:obp}.  

\begin{acknowledgments}
It is a pleasure to thank Daniel Irvine and Felix Schlenk for many helpful discussions.      
\end{acknowledgments}

\section{The lower bound on the capacities}
\label{sec:lb}

In this section we use combinatorial arguments to prove half of Theorem~\ref{thm:path}, namely:

\begin{lemma}
\label{lem:pathlb}
If $X_\Omega$ is any concave toric domain, then
\begin{equation}
\label{eqn:pathlb}
c_k(X_\Omega) \ge \max\{\ell_\Omega(\Lambda) \mid \mc{L}(\Lambda) = k\}.
\end{equation}
Here the maximum is over concave integral paths $\Lambda$.
\end{lemma}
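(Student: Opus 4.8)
The plan is to reduce the lower bound to a combinatorial statement about concave integral paths and then invoke Theorem~\ref{thm:weight}. Recall that Theorem~\ref{thm:weight}, combined with Remark~\ref{rmk:approximation}, expresses $c_k(X_\Omega)$ for a rational concave toric domain in terms of the weight expansion; more precisely, via Lemma~\ref{lem:weighteasy} we already know $c_k(X_\Omega) \ge c_k(\coprod_i B(a_i))$, which by \eqref{eqn:weightcompute} equals $\max\{\sum a_i d_i : \sum (d_i^2+d_i)/2 \le k\}$. So the strategy is: given a concave integral path $\Lambda$ with $\mc{L}(\Lambda)=k$, I would like to find a rational concave toric domain $X_{\Omega'} \subseteq X_\Omega$ (or directly a ball packing) with total weight data realizing $\ell_\Omega(\Lambda)$ while the lattice-point count stays $\le k$. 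The natural candidate for $\Omega'$ is the region under the path $\Lambda$ itself, suitably rescaled so that it fits inside $\Omega$: the edges of $\Lambda$ determine a piecewise-linear concave boundary, and the $\Omega$-length $\ell_\Omega(\Lambda) = \sum_e v_e \times p_e$ is precisely the total ``action'' one picks up by pushing each edge of $\Lambda$ out until it touches $\partial\Omega$.

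First I would handle the rational case and reduce to it by continuity at the end (using Lemma~\ref{lem:continuity2}, or the approximation remark after Theorem~\ref{thm:weight}). In the rational case, I would establish the key dictionary: for a rational concave toric domain $X_{\Omega}$ with weight expansion $(a_1,\dots,a_n)$, and for nonnegative integers $d_1,\dots,d_n$, the quantity $\sum a_i d_i$ equals $\ell_\Omega(\Lambda)$ for an appropriate concave integral path $\Lambda$, and moreover $\sum (d_i^2+d_i)/2 \le \mc{L}(\Lambda)$. Concretely, each ball $B(a_i)$ in the weight expansion corresponds to a triangle $T_i \cong \bigtriangleup(a_i)$ in the decomposition of $\Omega$; choosing $d_i$ copies of the ``standard'' integral path in $\bigtriangleup(a_i)$ — i.e. the path from $(d_i,0)$ to $(0,d_i)$, which encloses $(d_i^2+d_i)/2$ interior lattice points and has $\bigtriangleup(a_i)$-length $a_i d_i$ — and then transporting these paths back through the $SL_2(\Z)$ transformations and translations used to build the weight expansion, I would assemble them into a single concave integral path $\Lambda$ in $\Omega$. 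The $SL_2(\Z)$ action preserves lattice points, cross products, and the property of being a concave integral path, so $\ell_\Omega(\Lambda) = \sum a_i d_i$ and $\mc{L}(\Lambda) \ge \sum (d_i^2+d_i)/2$; combined with Lemma~\ref{lem:weighteasy} and monotonicity of the maximum in $k$, this gives that the right-hand side of \eqref{eqn:pathlb} is $\le c_k(X_\Omega)$ — wait, I need the inequality the other way, so actually I want: every path $\Lambda$ with $\mc{L}(\Lambda)=k$ can be ``straightened'' into one of these weight-expansion paths without decreasing $\ell_\Omega$ or increasing $\mc{L}$.

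So the real content is the converse direction: given an arbitrary concave integral path $\Lambda$ with $\mc{L}(\Lambda) = k$, I would show $\ell_\Omega(\Lambda) \le c_k(X_\Omega)$. Here I would argue by induction on the number of edges of the upper boundary of $\Omega$, mirroring the inductive structure of the weight expansion. Given $\Lambda$, decompose it according to the first triangle $\Omega_1 \cong \bigtriangleup(a_1)$: the portion of $\Lambda$ lying ``over'' $\Omega_1$ contributes at most $a_1 \cdot (\text{something})$ to $\ell_\Omega$ and encloses a corresponding number of lattice points, while the portions over $\Omega_2'$ and $\Omega_3'$ can be transported via the inverse $SL_2(\Z)$ transformations to concave integral paths $\Lambda_2, \Lambda_3$ in $\Omega_2, \Omega_3$, with $\ell_\Omega(\Lambda) = \ell_{\Omega_1}(\Lambda\cap\Omega_1) + \ell_{\Omega_2}(\Lambda_2) + \ell_{\Omega_3}(\Lambda_3)$ and $\mc{L}(\Lambda) \ge$ the sum of the corresponding lattice-point counts (the subadditivity here needs care because lattice points on the cutting lines must not be double-counted or lost — this is where I would be most worried). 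Applying the inductive hypothesis to $\Omega_2, \Omega_3$ and the base case (the triangle $\bigtriangleup(a_1)$, handled as in Example~\ref{ex:checkell} with $a=b$) to $\Omega_1$, then combining via the Disjoint Union property of ECH capacities, yields $\ell_\Omega(\Lambda) \le c_{k_1}(B(a_1)) + c_{k_2}(X_{\Omega_2}) + c_{k_3}(X_{\Omega_3}) \le c_k(X_\Omega)$ where $k_1+k_2+k_3 \le k$. The main obstacle, as flagged, is the careful bookkeeping of lattice points along the cutting lines $x+y=a$, $x=x_2$, $x=x_3$ under the $SL_2(\Z)$ transformations — ensuring the lattice-point counts are genuinely subadditive so that the budget $k$ splits correctly — and handling the non-generic cases where $\Lambda$ has an edge parallel to an edge of $\partial\Omega$ or a vertex on a cutting line. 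The non-rational case then follows by approximating $\Omega$ from inside by rational concave toric domains and using continuity of both sides.
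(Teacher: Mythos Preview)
Your eventual approach is essentially the paper's: reduce to the rational case by continuity (Lemmas~\ref{lem:continuity1} and~\ref{lem:continuity2}), then prove the rational case by induction on the weight-expansion decomposition, cutting a given $\Lambda$ into pieces $\Lambda_1,\Lambda_2,\Lambda_3$ over $\Omega_1,\Omega_2,\Omega_3$ via the same $SL_2(\Z)$ transformations and combining via the Disjoint Union property. Two small remarks. First, your main worry about lattice-point bookkeeping along the cutting lines evaporates: the paper shows \emph{exact} additivity $k_1+k_2+k_3=k$ (not merely $\le$), because under the transformations $T_2,T_3$ the lattice points counted by $\mc{L}(\Lambda_2)$ and $\mc{L}(\Lambda_3)$ are precisely those counted by $\mc{L}(\Lambda)$ lying on or above $\Lambda_1$ and below $\Lambda_2'$ resp.\ $\Lambda_3'$, while the rest are exactly $\mc{L}(\Lambda_1)$; also $\ell_\Omega(\Lambda)=\sum_i \ell_{\Omega_i}(\Lambda_i)$ holds with equality. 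Second, the paper phrases the inductive hypothesis as a bound on $c_{k_i}(W_i)$ with $W_i$ the ball packing of $\Omega_i$ (proving the intermediate Lemma~\ref{lem:rationallb} and then invoking Lemma~\ref{lem:weighteasy}), whereas you use $c_{k_i}(X_{\Omega_i})$ directly; both work, since the needed embedding $\op{int}(B(a_1))\sqcup \op{int}(X_{\Omega_2})\sqcup \op{int}(X_{\Omega_3})\hookrightarrow X_\Omega$ exists. Just be careful not to invoke Theorem~\ref{thm:weight} itself, since in the paper's logical order Lemma~\ref{lem:pathlb} is an ingredient in its proof.
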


\subsection{The lower bound in the rational case}

The following lemma, together with Lemma~\ref{lem:weighteasy},
implies Lemma~\ref{lem:pathlb} in the rational case.

\begin{lemma}
\label{lem:rationallb}
Let $X_\Omega$ be a rational concave toric domain with weight expansion $(a_1,\ldots,a_n)$. Then
\begin{equation}
\label{eqn:rationallb}
c_k\left(\coprod_{i=1}^n B(a_i)\right) \ge \max\{\ell_\Omega(\Lambda) \mid \mc{L}(\Lambda)=k\}.
\end{equation}
\end{lemma}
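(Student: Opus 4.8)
The plan is to reduce the claimed inequality \eqref{eqn:rationallb} to a purely combinatorial statement about lattice points and weight expansions, exploiting Remark~\ref{rmk:approximation} to make the left-hand side explicit. By \eqref{eqn:weightcompute}, we have
\[
c_k\left(\coprod_{i=1}^n B(a_i)\right) = \max\left\{\sum_{i=1}^n a_i d_i \;\bigg|\; \sum_{i=1}^n \frac{d_i^2+d_i}{2} \le k\right\},
\]
so it suffices to produce, for any concave integral path $\Lambda$ with $\mc{L}(\Lambda)=k$, a choice of nonnegative integers $d_1,\dots,d_n$ with $\sum_i \frac{d_i^2+d_i}{2} \le k$ and $\sum_i a_i d_i \ge \ell_\Omega(\Lambda)$. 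The natural way to obtain such $d_i$ is to use the geometric decomposition of $\Omega$ into triangles $T_1,\dots,T_n$ underlying the weight expansion (each $T_i$ affine equivalent to $\bigtriangleup(a_i)$, as in the proof of Lemma~\ref{lem:weighteasy}), and to read off $d_i$ from how $\Lambda$ interacts with $T_i$ after applying the corresponding element of $SL_2(\Z)$ and translation.

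The key steps, in order, would be: (1) Set up an induction on the number of edges in the upper boundary of $\Omega$, with the base case being a rational triangle $\bigtriangleup(a)$ (or more generally a rational triangle), where the claim should follow from the ellipsoid computation in Example~\ref{ex:checkell} together with the relation between the weight expansion of a rational triangle and the continued fraction expansion of its slope. (2) For the inductive step, given $\Lambda$, decompose it according to the decomposition $\Omega = \Omega_1 \cup \Omega_2' \cup \Omega_3'$ from \S\ref{sec:weight}: the portion of $\Lambda$ "over" $\Omega_1$ contributes a term governed by $a_1 = a$ and a ball-capacity count, while the portions over $\Omega_2'$ and $\Omega_3'$ pull back under the respective affine maps to concave integral paths $\Lambda_2, \Lambda_3$ for $\Omega_2, \Omega_3$. (3) Verify the additivity: that $\ell_\Omega(\Lambda)$ splits (up to the contribution of the $\bigtriangleup(a)$ piece) as a sum $\ell_{\Omega_1}(\cdot) + \ell_{\Omega_2}(\Lambda_2) + \ell_{\Omega_3}(\Lambda_3)$, using that the cross-product length \eqref{eqn:ellomega} behaves well under $SL_2(\Z)$ and translation and that the support points $p_e$ transform compatibly. (4) Verify the corresponding additivity of the lattice-point counts $\mc{L}$, namely $\mc{L}(\Lambda) \ge k_1 + \mc{L}(\Lambda_2) + \mc{L}(\Lambda_3)$ where $k_1$ is the number of lattice points the $\bigtriangleup(a)$-part of $\Lambda$ is responsible for (this is where one needs to be careful that lattice points are not double-counted or dropped at the seams between the three regions). (5) Apply the inductive hypothesis to $\Omega_2$ with $\Lambda_2$ and to $\Omega_3$ with $\Lambda_3$, and combine with the triangle base case for the $\Omega_1$ piece, to assemble the desired $d_i$.

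The main obstacle I expect is step (4), the bookkeeping of lattice points across the decomposition: the line $x+y=a$ and the vertical lines $x=x_2$, $x=x_3$ cut $\Omega$ into the three pieces, and the integer translations/shears used to form $\Omega_2$ and $\Omega_3$ must be checked to carry lattice points to lattice points bijectively on the relevant sub-regions, with the lattice points on the dividing line $x+y=a$ (which lie on $\Lambda$ or on the boundary triangle $\Omega_1$) accounted for exactly once. A secondary subtlety is handling the non-uniqueness of the support points $p_e$ in \eqref{eqn:ellomega} when $\Lambda$ has an edge parallel to an edge of the graph of $f$, and the degenerate cases $x_2 = 0$ or $x_3 = a$ where $\Omega_2$ or $\Omega_3$ is empty; these should be routine but need to be stated. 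Once the additivity of both $\ell_\Omega$ and $\mc{L}$ is established, the induction closes immediately and the inequality \eqref{eqn:rationallb} follows.
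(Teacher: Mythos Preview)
Your approach is essentially the paper's: decompose $\Lambda$ in parallel with the weight-expansion decomposition $\Omega\to(\Omega_1,\Omega_2,\Omega_3)$, prove that both $\mc{L}$ and $\ell_\Omega$ are additive under this decomposition, and induct. The paper in fact proves exact equalities $k_1+k_2+k_3=k$ and $\sum_{i=1}^3\ell_{\Omega_i}(\Lambda_i)=\ell_\Omega(\Lambda)$, and invokes the Disjoint Union property directly rather than constructing explicit $d_i$; your anticipated ``main obstacle'' in step~(4) is handled exactly as you suggest, via the lattice-preserving affine maps $T_2,T_3$.

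One point worth adjusting: the paper inducts on $n$ (the length of the weight expansion) rather than on the number of edges of the upper boundary of $\Omega$. This makes the base case $n=1$ a ball, where both sides of \eqref{eqn:rationallb} coincide immediately by Example~\ref{ex:checkell}. Your induction on edge count has as base case a general rational triangle (an ellipsoid $E(a,b)$), and for such $\Omega$ the decomposition does \emph{not} reduce the number of edges---$\Omega_2$ and $\Omega_3$ are again single-edge triangles. So your base case is not actually covered by Example~\ref{ex:checkell} alone: that example computes $c_k(E(a,b))$, not $c_k\big(\coprod_i B(a_i)\big)$, and bridging the two is precisely McDuff's result \cite[Cor.~2.5]{mcd} (or requires its own continued-fraction sub-induction). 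Inducting on $n$ sidesteps this and keeps the argument self-contained.
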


\begin{proof} The proof has four steps.

{\em Step 1: Setup.\/}
We use induction on $n$. If $n=1$, then
$X_\Omega$ is a ball and we know from Example~\ref{ex:checkell} that both sides of \eqref{eqn:rationallb} are equal. If $n>1$, let $\Omega_1$, $\Omega_2$, and $\Omega_3$ be as in the definition of the weight expansion in \S\ref{sec:weight}. By induction, we can assume that the lemma is true for $\Omega_1$, $\Omega_2$, and $\Omega_3$.

Let $\Lambda$ be a concave integral path with $\mc{L}(\Lambda) = k$. We need to show that
\begin{equation}
\label{eqn:lbneed}
c_k\left(\coprod_{i=1}^n B(a_i)\right) \ge \ell_\Omega(\Lambda).
\end{equation}
To prove this, let $W_i$ denote the disjoint union of the balls given by the weight expansion of $\Omega_i$ for $i=1,2,3$. By the definition of the weight expansion we have
\begin{equation}
\label{eqn:bdwe}
\coprod_{i=1}^n B(a_i) = \coprod_{i=1}^3 W_i.
\end{equation}
In Step 2 we will define concave integral paths $\Lambda_i$ for $i=1,2,3$, and we write $k_i=\mc{L}(\Lambda_i)$. By \eqref{eqn:bdwe} and the Disjoint Union property of ECH capacities, we know that
\[
c_{k_1+k_2+k_3}\left(\coprod_{i=1}^n B(a_i)\right) \ge \sum_{i=1}^3 c_{k_i}(W_i).
\]
By the inductive hypothesis we know that
\[
c_{k_i}(W_i) \ge \ell_{\Omega_i}(\Lambda_i).
\]
In Steps 3 and 4 we will further show that
\begin{equation}
\label{eqn:lb1}
k_1+k_2+k_3=k
\end{equation}
and
\begin{equation}
\label{eqn:lb2}
\sum_{i=1}^3\ell_{\Omega_i}(\Lambda_i)=\ell_\Omega(\Lambda).
\end{equation}
The above four equations and inequalities then imply \eqref{eqn:lbneed}.

{\em Step 2: Definition of $\Lambda_i$.} The paths $\Lambda_i$ are obtained from $\Lambda$ in the same way that the domains $\Omega_i$ are obtained from $\Omega$. We now make this explicit in order to fix notation. Let $\Lambda_1$ be the maximal line segment with slope $-1$ from the $y$ axis to the $x$ axis such that $\Lambda$ is contained in the closed half-space above the line extending $\Lambda_1$. Let $(0,A)$ and $(A,0)$ denote the endpoints of $\Lambda_1$. Let $\Lambda_2'$ denote the portion of $\Lambda$ to the left of $\Lambda_1\cap\Lambda$, and let $\Lambda_3'$ denote the portion of $\Lambda$ to the right of $\Lambda_1\cap\Lambda$. Let $T_2:\R^2\to\R^2$ be the map obtained by first translating down by $A$ and then multiplying by $\begin{pmatrix} 1 & 0 \\ 1 & 1\end{pmatrix} \in SL_2\Z$. Then $\Lambda_2 = T_2(\Lambda_2')$. Similarly, $\Lambda_3=T_3(\Lambda_3')$, where $T_3:\R^2\to\R^2$ is the map obtained by first translating to the left by $A$ and then multiplying by $\begin{pmatrix}1&1\\0&1\end{pmatrix} \in SL_2\Z$.

{\em Step 3: Proof of equation \eqref{eqn:lb1}.\/} Since $T_2$ preserves the lattice, $\mc{L}(\Lambda_2)$ is the number of lattice points counted by $\mc{L}(\Lambda)$ that are on or above $\Lambda_1$ and below $\Lambda_2'$. Likewise, $\mc{L}(\Lambda_3)$ is the number of lattice points counted by $\mc{L}(\Lambda)$ that are on or above $\Lambda_1$ and below $\Lambda_3'$. The remaining lattice points counted by $\mc{L}(\Lambda)$ are those that are below $\Lambda_1$, which are exactly the lattice points counted by $\mc{L}(\Lambda_1)$.

{\em Step 4: Proof of equation \eqref{eqn:lb2}.\/}
By construction, there is an injection
\[
\phi:
\op{Edges}(\Lambda) \to \coprod_{i=1}^3 \op{Edges}(\Lambda_i).
\]
The complement of the image of this injection consists of those edges of $\Lambda_1$ that are to the left or to the right of $\Lambda_1\cap\Lambda$. Denote these two sets of edges by $\op{Left}(\Lambda_1)$ and $\op{Right}(\Lambda_1)$ respectively. We tautologically have
\begin{equation}
\label{eqn:center}
\sum_{e\in\phi^{-1}(\op{Edges}(\Lambda_1))}v_e\times p_e = \left(\sum_{\hat{e}\in\op{Edges}(\Lambda_1)} - \sum_{\hat{e}\in\op{Left}(\Lambda_1)} - \sum_{\hat{e}\in\op{Right}(\Lambda_1)}\right) v_{\hat{e}}\times p_{\hat{e}}.
\end{equation}
Here if $\hat{e}$ is an edge of $\Lambda_i$, then $p_{\hat{e}}$ denotes the (not necessarily unique) point on the upper edge of $\Omega_i$ that appears in the formula \eqref{eqn:ellomega} for $\ell_{\Omega_i}(\Lambda_i)$.
To prove equation \eqref{eqn:lb2}, it is enough to show in addition to \eqref{eqn:center} that
\begin{equation}
\label{eqn:left}
\sum_{e\in\phi^{-1}(\op{Edges}(\Lambda_2))} v_e\times p_e = \sum_{\hat{e}\in\op{Edges}(\Lambda_2)}v_{\hat{e}}\times p_{\hat{e}} + \sum_{\hat{e}\in\op{Left}(\Lambda_1)}v_{\hat{e}}\times p_{\hat{e}}
\end{equation}
and
\begin{equation}
\label{eqn:right}
\sum_{e\in\phi^{-1}(\op{Edges}(\Lambda_3))} v_e\times p_e = \sum_{\hat{e}\in\op{Edges}(\Lambda_3)}v_{\hat{e}}\times p_{\hat{e}} + \sum_{\hat{e}\in\op{Right}(\Lambda_1)}v_{\hat{e}}\times p_{\hat{e}}.
\end{equation}

We will just prove equation \eqref{eqn:left}, as the proof of \eqref{eqn:right} is analogous. Let $e\in\phi^{-1}(\op{Edges}(\Lambda_2))$ and let $\hat{e}=\phi(e)$. We then have
\[
v_{\hat{e}} = \begin{pmatrix}1&0\\1&1\end{pmatrix} v_e
\]
and
\[
p_{\hat{e}} = \begin{pmatrix}1&0\\1&1\end{pmatrix}(p_e - (0,a))
\]
where $a$ is as in the definition of the weight expansion of $\Omega$ in \S\ref{sec:weight}. Consequently,
\[
v_e\times p_e = v_{\hat{e}}\times p_{\hat{e}} + v_e\times (0,a).
\]
Summing over all $e\in\phi^{-1}(\op{Edges}(\Lambda_2))$ gives
\begin{equation}
\label{eqn:left'}
\sum_{e\in\phi^{-1}(\op{Edges}(\Lambda_2))} v_e\times p_e = \sum_{\hat{e}\in\op{Edges}(\Lambda_2)}v_{\hat{e}}\times p_{\hat{e}} + \sum_{e\in\phi^{-1}(\op{Edges}(\Lambda_2))}v_{e}\times (0,a).
\end{equation}
But the rightmost sum in \eqref{eqn:left'} agrees with the rightmost sum in \eqref{eqn:left}, because for $\hat{e}\in\Lambda_1$ one can take $p_{\hat{e}}=(0,a)$, and the total horizontal displacement of the edges in $\phi^{-1}(\op{Edges}(\Lambda_2))$ is the same as the total horizontal displacement of the edges in $\op{Left}(\Lambda_1)$.
\end{proof}

\subsection{Continuity}

Having proved the lower bound \eqref{eqn:pathlb} for rational concave toric domains, we now use a continuity argument to extend this bound to arbitrary concave toric domains.

Recall that the Hausdorff metric on compact subsets of $\R^2$ is defined by
\[
d(\Omega_1,\Omega_2) = \max_{p_1\in\Omega_1}\min_{p_2\in\Omega_2}d(p_1,p_2) + \max_{p_2\in\Omega_2}\min_{p_1\in\Omega_1}d(p_2,p_1).
\]

\begin{lemma}
\label{lem:continuity1}
If $k$ is fixed, then $c_k(X_\Omega)$ is a continuous function of $\Omega$ with respect to the Hausdorff metric.
\end{lemma}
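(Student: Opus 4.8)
The plan is to deduce continuity from the Monotonicity and Conformality properties of ECH capacities, together with an elementary geometric ``sandwiching'' fact about concave toric domains. First observe that for $\lambda>0$ the map $z\mapsto\sqrt{\lambda}\,z$ is a symplectomorphism from $(X_\Omega,\lambda\omega)$ to $(X_{\lambda\Omega},\omega)$, where $\omega$ is the standard symplectic form on $\C^2$ and $\lambda\Omega=\{\lambda p\mid p\in\Omega\}$; hence Conformality gives $c_k(X_{\lambda\Omega})=\lambda\,c_k(X_\Omega)$. Combined with Monotonicity, this shows that whenever $\lambda\Omega_1\subseteq\Omega_2$ we have $\lambda\,c_k(X_{\Omega_1})\le c_k(X_{\Omega_2})$.

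It therefore suffices to prove the following geometric claim: given a concave toric domain $X_{\Omega_0}$ and $\epsilon\in(0,1)$, there is $\delta>0$ such that every concave toric domain $X_\Omega$ with $d(\Omega,\Omega_0)<\delta$ satisfies
\[
(1-\epsilon)\,\Omega_0\ \subseteq\ \Omega\ \subseteq\ (1+\epsilon)\,\Omega_0.
\]
Indeed, applying the previous paragraph to each of the two inclusions gives $(1-\epsilon)c_k(X_{\Omega_0})\le c_k(X_\Omega)\le(1+\epsilon)c_k(X_{\Omega_0})$, and since $\epsilon$, hence $\delta$, may be taken arbitrarily small while $c_k(X_{\Omega_0})<\infty$, this is exactly continuity at $\Omega_0$.

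To prove the claim I would use the structural facts that a concave toric domain $X_\Omega$ with associated convex function $f\colon[0,a]\to[0,b]$ has $\Omega$ equal to the cone over the graph $\gamma$ of $f$: thus $\Omega$ is star-shaped about the origin, $\gamma$ is bounded away from the origin, and $f$ is Lipschitz on every compact subinterval of $(0,a)$. From $d(\Omega,\Omega_0)<\delta$ one reads off that the axis intercepts $a,b$ of $\Omega$ differ by at most $\delta$ from the intercepts $a_0,b_0$ of $\Omega_0$; once $\delta<\epsilon\min(a_0,b_0)$, this already places the parts of $(1-\epsilon)\Omega_0$ lying on the coordinate axes inside $\Omega$, and those of $\Omega$ inside $(1+\epsilon)\Omega_0$. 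For the curved parts one shows that, for any prescribed $\eta>0$ and $\delta$ small enough, $f\ge f_0-\eta$ on $[0,(1-\epsilon)a_0]$; this follows from the Hausdorff bound together with convexity of $f$ --- convexity is used both to transfer the estimate from a nearby argument where the Hausdorff bound applies, and to rule out an anomalously steep drop of $f$, which would force $f$ to vanish before $x=a_0-\delta$ and contradict $a\ge a_0-\delta$. Since the radial gap between $(1-\epsilon)\gamma_0$ and $\gamma_0$ is bounded below by a positive constant $\eta_*=\eta_*(\Omega_0,\epsilon)$, taking $\eta<\eta_*$ yields $(1-\epsilon)\Omega_0\subseteq\Omega$. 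The reverse inclusion $\Omega\subseteq(1+\epsilon)\Omega_0$ follows by the symmetric argument with the roles of $\Omega$ and $\Omega_0$ exchanged, and is in fact easier because it relies on control of the \emph{fixed} convex function $f_0$ rather than of $f$.

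I expect the geometric claim, and in particular its behavior near the two points where $\gamma_0$ meets the coordinate axes, to be the main obstacle: there $\gamma_0$ may be tangent to an axis (for instance $f_0$ may have infinite one-sided slope at $0$ or at $a_0$, or slope $0$ at $a_0$), so $f_0$ fails to be Lipschitz up to that endpoint and the naive ``vertical discrepancy is controlled by horizontal discrepancy'' estimate breaks down. The remedy is the split used above: dispose of the on-axis points of $(1\mp\epsilon)\Omega_0$ directly via the intercept bounds, and near the axes use only the convexity-plus-intercept argument in place of a Lipschitz estimate.
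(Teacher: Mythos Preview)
Your approach is essentially the paper's. The paper's proof likewise reduces everything to Conformality plus Monotonicity via the scaling sandwich: given a sequence $\Omega_i\to\Omega$ in the Hausdorff metric, it asserts the existence of $r_i\to 1$ with $r_i^{-1}\Omega\subset\Omega_i\subset r_i\Omega$, and then reads off $r_i^{-1}c_k(X_\Omega)\le c_k(X_{\Omega_i})\le r_i\,c_k(X_\Omega)$. The only difference is one of emphasis: the paper states the sandwich in a single sentence and moves on, whereas you devote most of your proposal to justifying it and correctly flag the delicate behaviour near the axis endpoints (where $f_0'$ may be $0$ or $-\infty$, so the outer boundary is tangent to a coordinate ray). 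That concern is genuine --- the naive ``Hausdorff-close implies radial functions uniformly close'' argument does break down exactly there --- and the paper simply treats the sandwich as evident for star-shaped domains of this type. Your outlined fix (handle the intercepts directly, use convexity of $f$ together with the intercept bounds to control the curved part) is along the right lines, though the intermediate claim ``$f\ge f_0-\eta$ on $[0,(1-\epsilon)a_0]$'' is a bit stronger than you need and not obviously true near $x=0$ when $f$ is allowed to have arbitrarily steep initial slope; it is cleaner to argue radially, using that $\Omega_0\subset\{x\le s\}\cup\{y\le f_0(s)\}$ for every $s$, together with the intercept bounds $|a-a_0|,|b-b_0|<\delta$, to bound $\operatorname{dist}(p,\Omega_0)$ from below when $p\notin r\Omega_0$.
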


\begin{proof}
Fix $\Omega$, and given $r>0$, consider the scaling $r\Omega=\{(rx,ry)\mid (x,y)\in\Omega\}$. Observe that $X_{r\Omega}$ is symplectomorphic to $X_\Omega$ with the symplectic form multiplied by $r$. It then follows from the Conformality property of ECH capacities that $c_k(X_{r\Omega})=rc_k(X_\Omega)$. If $\{\Omega_i\}_{i\ge 1}$ is a sequence converging to $\Omega$ in the Hausdorff metric, then there is a sequence of positive real numbers $\{r_i\}_{i\ge 1}$ converging to $1$ such that
\[
r_i^{-1}\Omega\subset\Omega_i\subset r_i\Omega.
\]
By the Monotonicity property of ECH capacities, we have
\[
r_i^{-1}c_k(X_\Omega) \le c_k(X_{\Omega_i})\le r_i c_k(X_\Omega).
\]
It follows that $\lim_{i\to\infty}c_k(X_{\Omega_i})=c_k(X_\Omega)$.
\end{proof}

\begin{lemma}
\label{lem:continuity2}
If $k$ is fixed, then $\max\{\ell_\Omega(\Lambda)\mid \mc{L}(\Lambda)=k\}$ is a continuous function of $\Omega$ with respect to the Hausdorff metric.
\end{lemma}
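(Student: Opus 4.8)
The plan is to rewrite the quantity $g(\Omega):=\max\{\ell_\Omega(\Lambda)\mid\mc{L}(\Lambda)=k\}$ as a maximum of $\ell_\Omega(\Lambda)$ over a \emph{finite} collection of concave integral paths $\Lambda$ that does not depend on $\Omega$, and then to check that each individual function $\Omega\mapsto\ell_\Omega(\Lambda)$ is continuous. Since a finite maximum of continuous functions is continuous, this will complete the proof.

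The first step is to observe that horizontal edges of a concave integral path are inert, both for $\ell_\Omega$ and for $\mc{L}$. If $e$ is a horizontal edge of $\Lambda$, then in \eqref{eqn:ellomega} the point $p_e$ must be a lowest point of the graph of $f$, so $p_e=(a,0)$ and $v_e\times p_e=0$. Moreover, since the edges of a concave integral path have nondecreasing slopes and (when $k\ge1$) the path runs from a point $(0,A)$ on the positive $y$-axis down to a point $(B,0)$ on the positive $x$-axis, a horizontal edge can only be the last edge of $\Lambda$, where it lies flush against the $x$-axis; deleting it therefore changes neither $\ell_\Omega(\Lambda)$ nor $\mc{L}(\Lambda)$. (The case $k=0$ is trivial, as then $g\equiv 0$.) Hence for $k\ge1$ we may restrict the maximum defining $g(\Omega)$ to paths $\Lambda$ all of whose edges have strictly negative, finite slope, running from some $(0,A)$ to some $(B,0)$ with $A,B\ge1$ and meeting the axes only at these two endpoints. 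For such a path, the lattice points $(0,0),(1,0),\dots,(B-1,0)$ all lie in the region enclosed by $\Lambda$ and the coordinate axes but not on $\Lambda$, so $\mc{L}(\Lambda)\ge B$; by the symmetric argument, $\mc{L}(\Lambda)\ge A$. Consequently every such path with $\mc{L}(\Lambda)=k$ is contained in the square $[0,k]^2$, so there are only finitely many of them; denote this finite set by $\mc{P}_k$. Then $g(\Omega)=\max\{\ell_\Omega(\Lambda)\mid\Lambda\in\mc{P}_k\}$.

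The second step is to show that, for each fixed $\Lambda\in\mc{P}_k$, the function $\Omega\mapsto\ell_\Omega(\Lambda)$ is continuous in the Hausdorff metric. Let $e$ be an edge of $\Lambda$ with edge vector $v_e=(v_x,v_y)$, where $v_x>0>v_y$. The property defining $p_e$ says that $p_e$ lies on the graph $S(\Omega)$ of $f$ and that $S(\Omega)$ is contained in the half-plane $\{\,p\mid v_e\times p\ge v_e\times p_e\,\}$; hence $v_e\times p_e=\min_{p\in S(\Omega)}v_e\times p$. Since $f$ is convex with $f(0)=b$ and $f(a)=0$ it is nonincreasing, so $\Omega$ is closed under decreasing either coordinate, and because $v_x$ and $-v_y$ are both positive this minimum is also equal to the minimum of $v_e\times p$ over the compact convex set $\Omega^\vee:=\{(x,y)\mid 0\le x\le a,\ f(x)\le y\le b\}$, whose lower-left boundary is precisely $S(\Omega)$. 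In other words, $v_e\times p_e$ is (up to a sign) the value at a fixed direction of the support function of $\Omega^\vee$. Support functions of compact convex sets are continuous with respect to the Hausdorff metric, so it suffices to check the routine fact that $\Omega\mapsto\Omega^\vee$ --- equivalently, $\Omega\mapsto S(\Omega)$ --- is continuous on the class of concave toric domain regions: a Hausdorff-convergent sequence of such regions has convergent bounding boxes and, by a short squeeze argument on the values of the defining functions, Hausdorff-convergent upper boundaries. It follows that each $v_e\times p_e$, and hence $\ell_\Omega(\Lambda)=\sum_{e}v_e\times p_e$, is continuous in $\Omega$.

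Combining the two steps, $g$ is a maximum of finitely many continuous functions of $\Omega$, hence continuous. I expect the first step to carry the real content: the point is that one must strip off the horizontal edges before any finiteness is available, since the set of \emph{all} concave integral paths $\Lambda$ with $\mc{L}(\Lambda)=k$ is genuinely infinite. The continuity of the individual terms is routine, modulo the (easy) observation that the upper boundary $S(\Omega)$ varies continuously with $\Omega$.
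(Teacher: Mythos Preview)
Your approach coincides with the paper's: reduce to a finite set of concave integral paths and then show that $\Omega\mapsto\ell_\Omega(\Lambda)$ is continuous for each fixed $\Lambda$. On the finiteness step you are in fact more careful than the paper, which simply asserts that there are only finitely many concave integral paths with $\mc{L}(\Lambda)=k$; as you observe, this is literally false until horizontal tails along the $x$-axis have been stripped, since such tails contribute nothing to either $\ell_\Omega$ or $\mc{L}$ but produce infinitely many distinct paths. Your preliminary observation that a horizontal edge has $p_e=(a,0)$ and hence $v_e\times p_e=0$ is exactly what is needed to justify the paper's implicit normalization.

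For the continuity of each term $v_e\times p_e(\Omega)$, the paper argues directly: it produces a point $p'\in\Omega'$ whose distance to $\Omega$ is bounded below by a constant times $v_e\times(p_e(\Omega')-p_e(\Omega))$, yielding a Lipschitz bound $|v_e\times p_e(\Omega)-v_e\times p_e(\Omega')|\le c\,d(\Omega,\Omega')$. You instead recognise $v_e\times p_e(\Omega)$ as (minus) a value of the support function of the compact convex set $\Omega^\vee$ and invoke the standard Hausdorff-continuity of support functions. This is a clean reformulation; the residual work is the claim that $\Omega\mapsto\Omega^\vee$ (equivalently $\Omega\mapsto S(\Omega)$) is Hausdorff-continuous on the class of concave toric regions, which you sketch via the down-set structure of $\Omega$. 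That step is genuine---passing to boundaries or complements is not Hausdorff-continuous in general---but for these particular regions it goes through as you indicate. The paper's direct estimate avoids introducing $\Omega^\vee$ at the cost of a slightly fiddlier geometric argument; your version trades that for an appeal to convex-geometry folklore.
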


\begin{proof}
For $k$ fixed, there are only finitely many concave integral paths $\Lambda$ with $\mc{L}(\Lambda) = k$. Consequently, it is enough to show that if $\Lambda$ is a fixed concave integral path, then $\ell_\Omega(\Lambda)$ is a continuous function of $\Omega$. By \eqref{eqn:ellomega}, it is now enough to show that if $e$ is an edge of $\Lambda$, then $v_e\times p_e(\Omega)$ is a continuous function of $\Omega$. In fact there is a constant $c>0$ depending only on $v_e$ such that
\[
|v_e\times p_e(\Omega) - v_e\times p_e(\Omega')|\le c d(\Omega,\Omega').
\]
To see this, suppose that $v_e\times p_e(\Omega) < v_e\times p_e(\Omega')$. Write $p_e(\Omega)=(x_0,y_0)$. Every point $(x,y)\in\Omega$ must have $x\le x_0$ or $y\le y_0$. The portion of the upper boundary of $\Omega'$ with $x\ge x_0$ and $y\ge y_0$ is a path from the line $x=x_0$ to the line $y=y_0$. Let $p'\in\Omega'$ denote the intersection of this path with the line of slope $1$ through the point $(x_0,y_0)$. The above path must stay above the triangle bounded by the line $x=x_0$, the line $y=y_0$, and the line through $p_e(\Omega')$ parallel to $v_e$. It follows that there is a constant $c'$ depending only on $v_e$ such that
\[
\min_{p\in\Omega}d(p',p) \ge c' v_e\times(p_e(\Omega')-p_e(\Omega)).
\]
\end{proof}

\begin{proof}[Proof of Lemma~\ref{lem:pathlb}.]
By Lemmas~\ref{lem:weighteasy} and \ref{lem:rationallb}, this holds for rational concave toric domains. The general case now follows from Lemmas~\ref{lem:continuity1} and \ref{lem:continuity2}, since if $X_\Omega$ is an arbitrary concave toric domain, then $\Omega$ can be approximated in the Hausdorff metric by $\Omega'$ such that $X_{\Omega'}$ is a rational concave toric domain. 
\end{proof}

\section{The upper bound on the capacities}
\label{sec:ub}

To complete the proofs of Theorems~\ref{thm:weight} and \ref{thm:path}, we now prove:

\begin{lemma}
\label{lem:pathub}
If $X_\Omega$ is any concave toric domain, then
\begin{equation}
\label{eqn:pathub}
c_k(X_\Omega) \le \max\{\ell_\Omega(\Lambda) \mid \mc{L}(\Lambda) = k\}.
\end{equation}
Here the maximum is over concave integral paths $\Lambda$.
\end{lemma}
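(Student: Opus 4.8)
The plan is to prove the upper bound using the ECH (embedded contact homology) definition of the capacities, following the strategy already used in \cite{qech} for convex toric domains. Recall that $c_k(X_\Omega)$ is the infimum over generic admissible almost complex structures of the minimum symplectic action (total symplectic area) of an ECH generator in the distinguished class $\sigma_k$, where $\sigma_k$ is determined by the ECH index being $2k$. Concretely, for a toric domain whose boundary is smooth and strictly concave one perturbs the boundary to a contact form whose Reeb orbits are well understood: there are two embedded orbits $e_1,e_2$ along the two ``corner circles'' where $\Omega$ meets the axes, together with (after perturbation) a long list of orbits coming from the rest of the boundary. The key point is that an orbit set with total ECH index $2k$ must, by the ECH index inequality and the combinatorics of the toric boundary, ``enclose'' a concave integral path $\Lambda$ with $\mc{L}(\Lambda)=k$, and its symplectic action is bounded below — and in the limit exactly computed — by $\ell_\Omega(\Lambda)$.

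The steps I would carry out are as follows. First, reduce to the rational case: by Lemma~\ref{lem:continuity1} and Lemma~\ref{lem:continuity2} both sides of \eqref{eqn:pathub} are continuous in $\Omega$ with respect to the Hausdorff metric, so it suffices to prove the bound when $X_\Omega$ is a rational concave toric domain; and by approximating further we may assume $f$ is piecewise linear with vertices at rational (indeed, after scaling, lattice-ish) points. Second, perturb the contact form on $\partial X_\Omega$: near each edge of the boundary whose outward conormal direction is a rational vector $(p,q)$, a standard Morse–Bott perturbation replaces a Morse–Bott circle of Reeb orbits by two embedded orbits $e_{(p,q)}$ (elliptic) and $h_{(p,q)}$ (hyperbolic), with action approximately equal to the ``lattice width'' of $\Omega$ in the direction $(p,q)$ times the relevant combinatorial factor; I would quote the computation of Reeb dynamics on such boundaries from \cite{qech} essentially verbatim. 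Third, analyze ECH generators: an ECH generator $\alpha$ in the class with index $2k$ is a finite set of pairs $(\gamma_i,m_i)$; using the combinatorial formula for the ECH index on toric boundaries, one associates to $\alpha$ a concave integral path $\Lambda_\alpha$ (its ``edges'' record the orbits $\gamma_i$ with multiplicities $m_i$), shows $\mc{L}(\Lambda_\alpha)=k$ via the relation between the ECH index and the number of enclosed lattice points (a Pick-type formula), and shows the symplectic action $\mc{A}(\alpha)$ equals $\ell_\Omega(\Lambda_\alpha)$ up to an error that goes to zero as the perturbation goes to zero — this is exactly where the $\Omega$-length formula \eqref{eqn:ellomega}, with $p_e$ the supporting point of the graph of $f$, comes from: $v_e\times p_e$ is the action of the orbit in direction $v_e^\perp$. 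Fourth, take the infimum over $J$ and the limit over perturbations to conclude $c_k(X_\Omega)\le \max\{\ell_\Omega(\Lambda)\mid \mc{L}(\Lambda)=k\}$.

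The main obstacle, and the place that needs genuine care rather than a citation, is Step 3: matching the ECH index condition $I(\alpha)=2k$ with the lattice-point count $\mc{L}(\Lambda_\alpha)=k$, and simultaneously identifying the action of $\alpha$ with $\ell_\Omega(\Lambda_\alpha)$. In the convex case of \cite{qech} this identification runs through the relation $I = 2(\text{enclosed lattice points}) - (\text{something on the boundary})$, and the concave case has sign changes and a different relationship between the path $\Lambda$ and the boundary of $X_\Omega$ — here $\Lambda$ lives on the ``inside'' and is traversed in the opposite orientation, so one must be careful that the ECH index formula still produces $2k$ and not $2k$ plus boundary corrections that fail to cancel. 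I would handle this by first verifying the formula for $X_\Omega=E(a,b)$ an ellipsoid, where everything is explicit and matches Example~\ref{ex:checkell} and the Ellipsoid property, and then checking that the inductive/additive behavior of both $I$ and $\mc{A}$ under the weight-expansion decomposition of \S\ref{sec:weight} (paralleling Steps 3 and 4 of the proof of Lemma~\ref{lem:rationallb}) propagates the identity from ellipsoids to all rational concave toric domains. A secondary technical point is ensuring that no ``extra'' ECH generators of smaller action sneak into the relevant class — i.e.\ that the minimal-action generator really does correspond to a concave integral path — which again follows from the ECH index combinatorics once Step 3 is set up correctly.
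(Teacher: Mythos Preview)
Your high-level strategy---perturb the contact form on $\partial X_\Omega$, identify ECH generators combinatorially with labeled lattice paths, and read off the index as a lattice-point count and the action as $\ell_\Omega$---is exactly what the paper does. But your proposed implementation of the crucial Step~3 has a genuine gap, and two smaller points need correction.

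The main gap is the plan to verify the identities $I(\alpha)=2\mc{L}(\Lambda_\alpha)$ and $\mc{A}(\alpha)=\ell_\Omega(\Lambda_\alpha)$ by induction on the weight expansion, ``paralleling Steps 3 and 4 of the proof of Lemma~\ref{lem:rationallb}.'' That parallel does not exist: the ECH index and action are invariants of a generator on the single contact manifold $\partial X_\Omega$, and there is no mechanism that decomposes an ECH generator on $\partial X_\Omega$ into ECH generators on $\partial X_{\Omega_1}$, $\partial X_{\Omega_2}$, $\partial X_{\Omega_3}$. Lemma~\ref{lem:rationallb} works inductively only because it operates at the level of \emph{capacities}, where the Disjoint Union property supplies the needed additivity; no such property is available for individual orbit sets. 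The paper instead carries out a direct computation: choosing explicit trivializations $\tau$ over each Reeb orbit, computing $c_\tau(\alpha)$, $Q_\tau(\alpha)$, and $CZ_\tau^I(\alpha)$ term by term (equations \eqref{eqn:ctaualpha}--\eqref{eqn:czalpha}), and applying Pick's theorem to identify $Q_\tau(\alpha)$ with an area. This direct computation is unavoidable.

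Two further points. First, you mention the corner orbits $e_1,e_2$ (the paper's $\gamma_1,\gamma_2$) but then drop them; in fact the combinatorial generator is a quadruple $(\Lambda,\rho,m,n)$ with $m,n$ recording their multiplicities, and the index formula is $I=2\mc{L}(\Lambda_{m,n})+h$ for an \emph{extended} path $\Lambda_{m,n}$. To force $m=n=0$ the paper does not reduce to rational $\Omega$ but rather to smooth $\Omega$ with $|f'(0)|,|1/f'(a)|>k$, so that any nonzero $m$ or $n$ would already push the index above $2k$. Your reduction to piecewise-linear $\Omega$ both introduces corners (so $\partial X_\Omega$ is not smooth) and loses control of these endpoint slopes. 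Second, your claim $\mc{L}(\Lambda_\alpha)=k$ is too strong: because hyperbolic labels contribute $+1$ each to the index, one only gets $\mc{L}(\Lambda)=k'\le k$. The paper closes this gap not by combinatorics but by invoking the already-proved lower bound (Lemma~\ref{lem:pathlb}) together with the monotonicity $c_{k'}(X_\Omega)\le c_k(X_\Omega)$.
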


Note that Theorem~\ref{thm:weight} follows by combining Lemmas~\ref{lem:weighteasy}, \ref{lem:rationallb}, and \ref{lem:pathub}, while Theorem~\ref{thm:path} follows by combining Lemmas~\ref{lem:pathlb} and \ref{lem:pathub}.

\subsection{ECH capacities of star-shaped domains}
\label{sec:capacitiesreview}

The proof of Lemma~\ref{lem:pathub} requires some knowledge of the definition of ECH capacities, which we now briefly review; for full details see \cite{qech} or \cite{bn}. We will only explain the definition for the special case of smooth star-shaped domains in $\R^4$, since that is what we need here.

Let $Y$ be a three-manifold diffeomorphic to $S^3$, and let $\lambda$ be a nondegenerate contact form on $Y$ such that $\Ker(\lambda)$ is the tight contact structure. The {\em embedded contact homology\/} $ECH_*(Y,\lambda)$ is the homology of a chain complex $ECC_*(Y,\lambda,J)$ over $\Z/2$ defined as follows. (ECH can also be defined with integer coefficients, see \cite[\S9]{obg2}, but that is not needed for the definition of ECH capacities.) A generator of the chain complex is a finite set of pairs $\alpha=\{(\alpha_i,m_i)\}$ where the $\alpha_i$ are distinct embedded Reeb orbits, the $m_i$ are positive integers, and $m_i=1$ whenever $\alpha_i$ is hyperbolic. The chain complex in this case has an absolute $\Z$-grading which is reviewed in \S\ref{sec:approx} below; the grading of a generator $\alpha$ is denoted by $I(\alpha)\in\Z$. The chain complex differential counts certain $J$-holomorphic curves in $\R\times Y$ for an appropriate almost complex structure $J$; the precise definition of the differential is not needed here. Taubes \cite{taubes} proved that the embedded contact homology of a contact three-manifold is isomorphic to a version of its Seiberg-Witten Floer cohomology as defined by Kronheimer-Mrowka \cite{km}. For the present case of $S^3$ with its tight contact structure, this implies that
\[
ECH_*(Y,\lambda) = \left\{\begin{array}{cl} \Z/2, & *=0,2,4,\ldots,\\ 0, & \mbox{otherwise}. \end{array}\right.
\]
We denote the nonzero element of $ECH_{2k}(Y,\lambda)$ by $\zeta_k$.

The {\em symplectic action\/} of a chain complex generator $\alpha=\{(\alpha_i,m_i)\}$ is defined by
\[
\mc{A}(\alpha) = \sum_im_i\int_{\alpha_i}\lambda.
\]
We define $c_k(Y,\lambda)$ to be the smallest $L\in\R$ such that $\zeta_k$ has a representative in $ECC_*(Y,\lambda,J)$ which is a sum of chain complex generators each of which has symplectic action less than or equal to $L$. It follows from \cite[Thm.\ 1.3]{cc2} that $c_k(Y,\lambda)$ does not depend on $J$. The numbers $c_k(Y,\lambda)$ are called the {\em ECH spectrum\/} of $(Y,\lambda)$.

If $\lambda$ is a degenerate contact form on $Y\approx S^3$ giving the tight contact structure, we define
\begin{equation}
\label{eqn:spectrumlimit}
c_k(Y,\lambda) = \lim_{n\to \infty}c_k(Y,f_n\lambda)
\end{equation}
where $\{f_n\}_{n\ge1}$ is a sequence of positive functions on $Y$ which converges to $1$ in the $C^0$ topology such that each contact form $f_n\lambda$ is nondegenerate. Lemmas from \cite[\S3.1]{qech} imply that this is well-defined, as explained in \cite[\S2.5]{two}.

Now let $X\subset\R^4$ be a compact star-shaped domain with smooth boundary $Y$. Then
\[
\lambda_{std} = \frac{1}{2}\sum_{i=1}^2(x_idy_i-y_idx_i)
\]
restricts to a contact form on $Y$, and we define the {\em ECH capacities\/} of $X$ by 
\begin{equation}
\label{eqn:definecapacity}
c_k(X) = c_k(Y,{\lambda_{std}}|_Y).
\end{equation}

\subsection{The combinatorial chain complex}
\label{sec:comb}

Let $X_\Omega$ be a concave toric domain determined by a convex function $f:[0,a]\to[0,b]$. We assume below that the function $f$ is smooth, $f'(0)$ and $f'(a)$ are irrational, $f'$ is constant near $0$ and $a$, and $f''(x)>0$ whenever $f'(x)$ is rational. Then $\partial X_\Omega$ is smooth. As we will see in \S\ref{sec:approx} below, $\lambda_{std}$ restricts to a degenerate contact form on $\partial X_\Omega$. Similarly to \cite{t3}, there is a combinatorial model for the ECH chain complex of appropriate nondegenerate perturbations of this contact form, which we denote by $ECC^{comb}_*(\Omega)$ and define as follows.

\begin{definition}
\label{def:combgen}
A generator of $ECC_*^{comb}(\Omega)$ is a quadruple $\widetilde{\Lambda}=(\Lambda,\rho,m,n)$, where:
\begin{description}
\item{(a)}
 $\Lambda$ is a concave integral path from $[0,B]$ to $[A,0]$ such that the slope of each edge is in the interval $[f'(0),f'(a)]$.
\item{(b)}
$\rho$ is a labeling of each edge of $\Lambda$ by `$e$' or `$h$'.
\item{(c)}
 $m$ and $n$ are nonnegative integers.
\end{description}
Here an ``edge'' of $\Lambda$ means a segment of $\Lambda$ of which each endpoint is either an initial or a final endpoint of $\Lambda$, or a point at which $\Lambda$ changes slope.
\end{definition}

We define the grading $I^{comb}(\widetilde{\Lambda})\in\Z$ of the generator $\widetilde{\Lambda}=(\Lambda,\rho,m,n)$ as follows. Let $\Lambda_{m,n}$ denote the path in the plane obtained by concatenating the following three paths:
\begin{description}
\item{(1)}
The highest polygonal path with vertices at lattice points from $(0,B+n+\floor{-mf'(0)})$ to $(m,B+n)$ which is below the line through $(m,B+n)$ with slope $f'(0)$.
\item{(2)}
The image of $\Lambda$ under the translation $(x,y)\mapsto (x+m,y+n)$. \item{(3)}
The highest polygonal path with vertices at lattice points from $(A+m,n)$ to $(A+m+\floor{-n/f'(a)},0)$ which is below the line through $(A+m,n)$ with slope $f'(a)$.
\end{description}
Let $\mc{L}(\Lambda_{m,n})$ denote the number of lattice points in the region bounded by $\Lambda_{m,n}$ and the axes, not including lattice points on the image of $\Lambda$ under the translation $(x,y)\mapsto (x+m,y+n)$. We then define
\begin{equation}
\label{eqn:icomb}
I^{comb}(\widetilde{\Lambda}) = 2\mc{L}(\Lambda_{m,n}) + h(\widetilde{\Lambda})
\end{equation}
where $h(\widetilde{\Lambda})$ denotes the number of edges of $\Lambda$ that are labeled `$h$'.

We define the action $\mc{A}^{comb}(\widetilde{\Lambda})\in\R$ of the generator $\widetilde{\Lambda}=(\Lambda,\rho,m,n)$ by
\begin{equation}
\label{eqn:acomb}
\mc{A}^{comb}(\widetilde{\Lambda}) = \ell_\Omega(\Lambda) + an+bm.
\end{equation}

One can also define a combinatorial differential on the chain complex $ECC_*^{comb}(\Omega)$ similarly to \cite{t3}, which agrees with the ECH differential for appropriate perturbations of the contact form and almost complex structures, but we do not need this here. What we do need is the following:

\begin{lemma}
\label{lem:approx}
For each $\epsilon>0$, there exists a contact form $\lambda$ on $\partial X_\Omega$ with the following properties:
\begin{description}
\item{(a)}
$\lambda$ is nondegenerate.
\item{(b)}
$\lambda=f\lambda_{std}|_{\partial X_\Omega}$ where $\|f-1\|_{C^0}<\epsilon$.
\item{(c)}
There is a bijection between the generators of $ECC(\partial X_\Omega,\lambda)$ with $\mc{A}<1/\epsilon$ and the generators of $ECC^{comb}(\Omega)$ with $\mc{A}^{comb}<1/\epsilon$, such that if $\alpha$ and $\widetilde{\Lambda}$ correspond under this bijection, then
\[
I(\alpha) = I^{comb}(\widetilde{\Lambda})
\]
and
\[
|\mc{A}(\alpha) - \mc{A}^{comb}(\widetilde{\Lambda})| < \epsilon.
\]
\end{description}
\end{lemma}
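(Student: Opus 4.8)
The plan is to understand $\partial X_\Omega$ as a contact manifold and to construct, for any given $\epsilon > 0$, an explicit nondegenerate perturbation whose short Reeb orbits are organized combinatorially exactly like the generators of $ECC^{comb}_*(\Omega)$. First I would recall the Reeb dynamics of $\lambda_{std}|_{\partial X_\Omega}$: because $X_\Omega$ is a toric domain, $\partial X_\Omega$ is foliated by the $T^2$-orbits lying over the graph of $f$, together with the two circles lying over $(a,0)$ and $(0,b)$. Over a point of the graph where $f'$ is irrational the $T^2$-orbit carries an irrational (hence orbit-filling, non-closed) flow; over a point where $f'$ is rational, say $f'=-p/q$ in lowest terms, the whole $T^2$ is foliated by a Morse-Bott family of closed Reeb orbits in the class $(q,p)$, with action equal to the $\ell_\Omega$-value of the corresponding primitive edge. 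The two distinguished circles over the axes are elliptic orbits whose action is $a$ and $b$ respectively; these account for the integers $n$ and $m$ in the combinatorial generator (the $an + bm$ in \eqref{eqn:acomb}), and the hypotheses that $f'(0),f'(a)$ are irrational and $f'$ constant near the endpoints make these two orbits nondegenerate already.

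Next I would perform the standard Morse–Bott perturbation (as in \cite{t3} and the analogous cylindrical/polydisk computations in \cite{qech}): for each rational slope appearing among the finitely many $f'$-values with action $< 1/\epsilon$, replace the Morse-Bott $S^1$ of closed orbits in class $(q,p)$ by exactly two nondegenerate orbits, one elliptic ($e$) and one hyperbolic ($h$), with actions within $\epsilon$ of the Morse-Bott value. A concave integral path $\Lambda$ with edges of slope in $[f'(0),f'(a)]$ then decomposes uniquely into primitive integer vectors, and a choice of $e/h$-label on each edge (with $m$-fold covers of the $e$-orbit allowed but $h$-orbits only simple, exactly matching Definition~\ref{def:combgen}) corresponds to an honest ECH generator built from these perturbed orbits together with the $n$-fold cover of the $(0,b)$-axis orbit and the $m$-fold cover of the $(a,0)$-axis orbit. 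This gives property (b) and the bijection in (c); the action estimate $|\mc{A}(\alpha) - \mc{A}^{comb}(\widetilde\Lambda)| < \epsilon$ follows because the Morse-Bott perturbation changes each orbit's action by $O(\epsilon)$ and the action of the unperturbed generator is exactly $\ell_\Omega(\Lambda) + an + bm$ by the definition of $\ell_\Omega$ via \eqref{eqn:ellomega} (the point $p_e$ on the graph of $f$ being where the supporting line of slope matching the edge $e$ touches). Nondegeneracy (a) holds after a further generic $C^\infty$-small perturbation supported away from the finitely many relevant orbits, which does not disturb the short part of the spectrum.

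The grading identity $I(\alpha) = I^{comb}(\widetilde\Lambda)$ is where the real work lies, and I expect it to be the main obstacle. The ECH index of a generator on $S^3$ is $I = c_\tau + Q_\tau + \sum_{\text{orbits}} \mathrm{CZ}$, and one must show this equals $2\mc{L}(\Lambda_{m,n}) + h(\widetilde\Lambda)$. My plan is to reduce to the known ellipsoid/polydisk computations: the paths $\Lambda_{m,n}$ in items (1)–(3) of the definition of $I^{comb}$ are precisely engineered so that the two appended "staircase" pieces over the endpoints encode the Conley–Zehnder contributions of the $m$- and $n$-fold covers of the axis orbits (via the lattice-point count for the corresponding ellipsoidal corner, exactly as $c_k(E(a,b))$ is computed by lattice points under a line), while the central piece (the translate of $\Lambda$) and the "below the edge" lattice points encode $c_\tau + Q_\tau$ for the toric part; the term $h(\widetilde\Lambda)$ is the usual correction from replacing each elliptic Morse-Bott orbit's $\mathrm{CZ}$ by the hyperbolic one's. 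Concretely I would cite or adapt the index computation of \cite[\S3–4]{t3} (which handled the polydisk, i.e. the case where $\Omega$ is a rectangle) and the ellipsoid index computation, checking that the convexity of $f$ makes all the relevant writhe/linking terms add up the same way. This is a somewhat lengthy bookkeeping argument with trivializations, so in the paper I would state it as following "similarly to \cite{t3}" and give the key formula, rather than reproving the ECH index axioms from scratch.
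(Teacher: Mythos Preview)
Your proposal is correct and follows essentially the same approach as the paper: analyze the Morse--Bott Reeb dynamics (the rational-slope tori plus the two elliptic axis orbits), perturb each torus to an $e/h$ pair to obtain the bijection and the action estimate, and then verify $I(\alpha)=I^{comb}(\widetilde\Lambda)$ via a direct computation of $c_\tau+Q_\tau+CZ_\tau^I$. The paper carries out this last step explicitly---computing the linking numbers and Conley--Zehnder indices in a chosen trivialization and invoking Pick's theorem to convert $Q_\tau(\alpha)=2\operatorname{Area}(R'_{m,n})$ into the lattice-point count---rather than deferring to \cite{t3}; aside from this (and your $m,n$ being swapped relative to the paper's convention), your outline matches.
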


Lemma~\ref{lem:approx} will be proved in \S\ref{sec:approx}. We can now deduce:

\begin{lemma}
\label{lem:comb}
For each nonnegative integer $k$, there exists a generator $\widetilde{\Lambda}$ of $ECC^{comb}(\Omega)$ such that $I^{comb}(\widetilde{\Lambda})=2k$ and $\mc{A}^{comb}(\widetilde{\Lambda}) = c_k(X_\Omega)$.
\end{lemma}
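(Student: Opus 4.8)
The plan is to deduce Lemma~\ref{lem:comb} from the approximation result Lemma~\ref{lem:approx} together with the definition of $c_k$ of a contact form as the minimal action of a chain-complex representative of the generator $\zeta_k \in ECH_{2k}$. The subtle point is that the combinatorial model $ECC^{comb}(\Omega)$ only matches the genuine ECH chain complex of a perturbed contact form $\lambda$ \emph{below a fixed action threshold}, and the threshold depends on the quality $\epsilon$ of the perturbation. So one cannot simply say ``$c_k(X_\Omega)$ is the minimal action over combinatorial generators of grading $2k$'' — one has to run a limiting argument in $\epsilon$.

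First I would fix $k$ and recall from \eqref{eqn:definecapacity} and \eqref{eqn:spectrumlimit} that $c_k(X_\Omega) = \lim_{\epsilon\to 0} c_k(\partial X_\Omega, \lambda_\epsilon)$, where $\lambda_\epsilon$ is the nondegenerate contact form supplied by Lemma~\ref{lem:approx}; the hypothesis $\|f-1\|_{C^0}<\epsilon$ guarantees the forms $\lambda_\epsilon$ converge to $\lambda_{std}|_{\partial X_\Omega}$ in $C^0$, so this limit computes $c_k(X_\Omega)$. Next, for each $\epsilon$ small enough that $c_k(X_\Omega)+1 < 1/\epsilon$, the representatives of $\zeta_k$ achieving (or nearly achieving) the minimal action $c_k(\partial X_\Omega,\lambda_\epsilon)$ are sums of generators of action $\le c_k(\partial X_\Omega,\lambda_\epsilon)$, hence below the threshold $1/\epsilon$; by part (c) of Lemma~\ref{lem:approx}, these correspond bijectively to combinatorial generators $\widetilde\Lambda$ with $\mc{A}^{comb}$ below the threshold, and with $I(\alpha)=I^{comb}(\widetilde\Lambda)$ and $|\mc{A}(\alpha)-\mc{A}^{comb}(\widetilde\Lambda)|<\epsilon$. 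In particular, $c_k(\partial X_\Omega,\lambda_\epsilon)$ is, up to an error of size $O(\epsilon)$, the minimum over combinatorial generators $\widetilde\Lambda$ with $I^{comb}(\widetilde\Lambda)=2k$ of $\mc{A}^{comb}(\widetilde\Lambda)$ — provided that minimum is actually attained (which it is, since for fixed $k$ there are finitely many concave integral paths with a given enclosed lattice count contributing to grading $2k$, once one checks that the $m,n$ labels are also bounded in the relevant range; the grading formula \eqref{eqn:icomb} forces $\mc{L}(\Lambda_{m,n})=k$, which bounds $m$ and $n$).

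Then I would let $\epsilon\to 0$. Since the set of combinatorial generators and their actions $\mc{A}^{comb}$ do not depend on $\epsilon$, the quantity $\min\{\mc{A}^{comb}(\widetilde\Lambda) : I^{comb}(\widetilde\Lambda)=2k\}$ is a fixed real number $m_k$, and the previous paragraph shows $|c_k(\partial X_\Omega,\lambda_\epsilon) - m_k| \to 0$. Combined with $c_k(\partial X_\Omega,\lambda_\epsilon)\to c_k(X_\Omega)$, this gives $c_k(X_\Omega)=m_k$, i.e. there is a combinatorial generator $\widetilde\Lambda$ with $I^{comb}(\widetilde\Lambda)=2k$ and $\mc{A}^{comb}(\widetilde\Lambda)=c_k(X_\Omega)$, which is exactly the claim.

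The main obstacle I anticipate is not any single hard estimate but rather the bookkeeping needed to make the ``minimal-action representative of $\zeta_k$ lies below the threshold'' step rigorous: one must argue that one only ever needs combinatorial generators of bounded action and bounded grading data $(m,n)$, so that a finite, $\epsilon$-independent minimization problem controls everything, and that the $O(\epsilon)$ errors in part (c) of Lemma~\ref{lem:approx} do not interact badly with the infimum defining $c_k$. A secondary subtlety is checking that the generator $\zeta_k$ is genuinely detected at grading $2k$ (using the computation $ECH_{2k}(S^3,\xi_{std})=\Z/2$ recalled earlier) so that the minimization is over the correct grading; but this is immediate from the definitions already in the excerpt.
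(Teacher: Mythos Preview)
Your overall strategy---apply Lemma~\ref{lem:approx} for a sequence $\epsilon\to 0$, pull back the action-minimizing generator to the combinatorial model, and use finiteness of the grading-$2k$ combinatorial generators to pass to the limit---is exactly the paper's approach. But the specific argument you wrote contains a gap.

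You assert that $c_k(\partial X_\Omega,\lambda_\epsilon)$ equals, up to $O(\epsilon)$, the quantity $m_k=\min\{\mc{A}^{comb}(\widetilde\Lambda):I^{comb}(\widetilde\Lambda)=2k\}$. From what you wrote you only get one inequality: the generator $\alpha$ of maximal action in an optimal representative of $\zeta_k$ has $I(\alpha)=2k$ and $\mc{A}(\alpha)=c_k(\partial X_\Omega,\lambda_\epsilon)$, and its combinatorial counterpart witnesses $m_k\le c_k(\partial X_\Omega,\lambda_\epsilon)+\epsilon$. The reverse inequality $c_k(\partial X_\Omega,\lambda_\epsilon)\le m_k+O(\epsilon)$ does \emph{not} follow, because $c_k(Y,\lambda)$ is by definition the minimal action needed to represent the homology class $\zeta_k$, not the minimal action over all grading-$2k$ chain-complex generators. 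A grading-$2k$ generator of small action may fail to be a cycle, or may be a boundary; nothing in Lemma~\ref{lem:approx} (which says nothing about the differential) rules this out. So your conclusion $c_k(X_\Omega)=m_k$ is stronger than the lemma and not justified.

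The fix is simple and uses the finiteness you already observed. For each $n$ you obtain a combinatorial generator $\widetilde\Lambda_n$ with $I^{comb}(\widetilde\Lambda_n)=2k$ and $|\mc{A}^{comb}(\widetilde\Lambda_n)-c_k(X_\Omega)|<2/n$. Since there are only finitely many combinatorial generators of grading $2k$, some fixed $\widetilde\Lambda$ occurs for infinitely many $n$; its combinatorial action is then exactly $c_k(X_\Omega)$. This is precisely the paper's argument, and it avoids any claim about whether $c_k(X_\Omega)$ is the minimum of $\mc{A}^{comb}$ over the grading-$2k$ set.
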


\begin{proof}
Fix $k$. For each positive integer $n$, let $\lambda_n$ be a contact form provided by Lemma~\ref{lem:approx} for $\epsilon=1/n$. It follows from \eqref{eqn:spectrumlimit} and \eqref{eqn:definecapacity} that we can choose $\lambda_n$ so that
\[
|c_k(X_\Omega) - c_k(\partial X_\Omega,\lambda_n)|<1/n.
\]
By definition, there exists a generator $\alpha_n$ of $ECC_{2k}(\partial X_\Omega,\lambda_n)$ with $\mc{A}(\alpha_n)=c_k(\partial X_\Omega,\lambda_n)$.
Assume $n$ is sufficiently large that $c_k(X_\Omega) + 1/n < n$.  Then $\mc{A}(\alpha_n)<n$, so $\alpha_n$ corresponds to a generator $\widetilde{\Lambda}_n$ of $ECC^{comb}(\Omega)$ under the bijection in Lemma~\ref{lem:approx}, with
\begin{equation}
\label{eqn:comb1}
I^{comb}(\widetilde{\Lambda}_n)=2k
\end{equation}
and
\begin{equation}
\label{eqn:comb2}
|\mc{A}^{comb}(\widetilde{\Lambda}_n) - c_k(X_\Omega)| < 2/n.
\end{equation}
It follows from \eqref{eqn:icomb} that there are only finitely many generators $\widetilde{\Lambda}$ of $ECC^{comb}(\Omega)$ with $I^{comb}(\widetilde{\Lambda})=2k$. Consequently, there exists such a generator $\widetilde{\Lambda}$ which agrees with infinitely many $\widetilde{\Lambda}_n$. It now follows from \eqref{eqn:comb1} and \eqref{eqn:comb2} that $I^{comb}(\widetilde{\Lambda})=2k$ and $\mc{A}^{comb}(\widetilde{\Lambda}_n)=c_k(X_\Omega)$ as desired.
\end{proof}

\begin{proof}[Proof of Lemma~\ref{lem:pathub}.]
Fix $k$. By the continuity in Lemmas~\ref{lem:continuity1} and \ref{lem:continuity2}, we can assume that $\Omega$ is determined by a function $f:[0,a]\to[0,b]$ satisfying the conditions at the beginning of \S\ref{sec:comb}, such that in addition
\begin{equation}
\label{eqn:turn}
|f'(0)|, |1/f'(a)| > k.
\end{equation}
By Lemma~\ref{lem:comb}, we can choose a generator $\widetilde{\Lambda}=(\Lambda,\rho,m,n)$ of $ECC^{comb}(\Omega)$ with $I^{comb}(\widetilde{\Lambda})=2k$ and $\mc{A}^{comb}(\widetilde{\Lambda}) = c_k(X_\Omega)$. It follows from \eqref{eqn:turn} that $m=n=0$; otherwise the region bounded by $\Lambda_{m,n}$ and the axes would include at least $k+1$ lattice points on the axes not in the translate of $\Lambda$, so by \eqref{eqn:icomb} we would have $I^{comb}(\widetilde{\Lambda})>2k$, which is a contradiction.

Let $k'=\mc{L}(\Lambda)$. Then by \eqref{eqn:icomb} we have $k'\le k$, and by \eqref{eqn:acomb} we have $\ell_\Omega(\Lambda) = c_k(X_\Omega)$. Thus
\[
c_k(X_\Omega) \le \max\{\ell_\Omega(\Lambda) \mid \mc{L}(\Lambda) = k'\}.
\]
To complete the proof of Lemma~\ref{lem:pathub}, one could give a combinatorial proof that the right hand side of \eqref{eqn:pathub} is a nondecreasing function of $k$. Instead we will take a shortcut: by Lemma~\ref{lem:pathlb} we have
\[
\max\{\ell_\Omega(\Lambda) \mid \mc{L}(\Lambda) = k'\} \le c_{k'}(X_\Omega),
\]
and by \eqref{eqn:capacitiesincreasing} we have
\[
c_{k'}(X_\Omega) \le c_k(\Omega).
\]
Thus the above three inequalities are equalities.
\end{proof}

\subsection{The generators of the ECH chain complex}
\label{sec:approx}

To complete the computations of ECH capacities, our remaining task is to give the:

\begin{proof}[Proof of Lemma~\ref{lem:approx}.] The proof has five steps.

{\em Step 1.\/}
We first determine the embedded Reeb orbits of the contact form $\lambda_{std}|_{\partial X_\Omega}$ and their symplectic actions. Similarly to \cite[\S4.3]{bn}, these are given as follows:
\begin{itemize}
\item The circle $\gamma_1=\{z\in\partial X_{\Omega}\mid z_2=0\}$ is an embedded elliptic Reeb orbit with action $\mc{A}(\gamma_1)=a$.
\item The circle $\gamma_2=\{z\in\partial X_\Omega\mid z_1=0\}$ is an embedded elliptic Reeb orbit with action $\mc{A}(\gamma_2)=b$.
\item For each $x\in(0,a)$ such that $f'(x)$ is rational, the torus
\[
\{z\in\partial X_\Omega\mid \pi(|z_1|^2,|z_2|^2)=(x,f(x))\}
\]
is foliated by a Morse-Bott circle of Reeb orbits.
Let $v_1$ be the smallest positive integer such that $v_2=f'(x)v_1\in\Z$, write $v=(v_1,v_2)$, and denote this circle of Reeb orbits by $\mc{O}_v$. Then each Reeb orbit in $\mc{O}_v$ has symplectic action
\[
\mc{A} = v \times (x,f(x)).
\]
\end{itemize}

In particular, if $\alpha=\{(\alpha_i,m_i)\}$ is a finite set of embedded Reeb orbits with positive integer multiplicities, then $\alpha$ determines a triple $(\Lambda,m,n)$ satisying conditions (a) and (c) in Definition~\ref{def:combgen}. The path $\Lambda$ is obtained by taking the vector $v$ for each Reeb orbit $\alpha_i$ that is in the Morse-Bott circle $\mc{O}_v$, multiplied by the covering multiplicity $m_i$, and concatenating these vectors in order of increasing slope. The integer $m$ is the multiplicity of $\gamma_2$ if it appears in $\alpha$, and otherwise $m=0$; likewise $n$ is the multiplicity of $\gamma_1$ if it appears in $\alpha$ and otherwise $n=0$. It follows from the above calculations that
\[
\mc{A}(\alpha) = \ell_\Omega(\Lambda)+an+bm.
\]

{\em Step 2.\/}
Given $\epsilon>0$, we can now perturb $\lambda_{std}|_{\partial X_\Omega}$ to $\lambda=f\lambda_{std}|_{\partial X_\Omega}$ where $f$ is $C^0$-close to $1$, so that each Morse-Bott circle $\mc{O}_v$ of embedded Reeb orbits with action less than $1/\epsilon$ becomes two embedded Reeb orbits of approximately the same action, namely an elliptic orbit $e_v$ and a hyperbolic orbit $h_v$; no other Reeb orbits of action less than $1/\epsilon$ are created; and the Reeb orbits $\gamma_1$ and $\gamma_2$ are unaffected.

Now the generators of $ECC(\partial X_\Omega,\lambda)$ with $\mc{A}<1/\epsilon$ correspond to generators of $ECC^{comb}(\Omega)$ with $\mc{A}^{comb}<1/\epsilon$. Given a generator $\alpha=\{(\alpha_i,m_i)\}$ of $ECC(\partial X_\Omega,\lambda)$ with $\mc{A}(\alpha)<1/\epsilon$, the corresponding combinatorial generator $\widetilde{\Lambda}=(\Lambda,\rho,m,n)$ is determined as follows. The triple $(\Lambda,m,n)$ is determined as in Step 1. The labeling $\rho$ is defined as follows. Suppose an edge of $\Lambda$ corresponds to the vector $kv$ where $v=(v_1,v_2)$ is an irreducible integer vector and $k$ is a positive integer. Then either $\alpha$ contains the elliptic orbit $e_v$ with multiplicity $k$, or $\alpha$ contains the elliptic orbit $e_v$ with multiplicity $k-1$ and the hyperbolic orbit $h_v$ with multiplicity $1$. The labeling of the edge is `$e$' in the former case and `$h$' in the latter case.

To complete the proof of Lemma~\ref{lem:approx}, we need to show that $I(\alpha)=I^{comb}(\widetilde{\Lambda})$.

{\em Step 3.\/} Let $\alpha=\{(\alpha_i,m_i)\}$ be a generator of $ECC(\partial X_\Omega,\lambda)$. We now review the definition of the grading $I(\alpha)$ in the present context; for details of the grading in general see \cite[\S3]{bn} or \cite[\S2]{ir}. The formula is
\begin{equation}
\label{eqn:Ialpha}
I(\alpha) = c_\tau(\alpha) + Q_\tau(\alpha) + CZ_\tau^I(\alpha)
\end{equation}
where the individual terms are defined as follows. First, $\tau$ is a homotopy class of symplectic trivialization of $\xi=\Ker(\lambda)$ over each of the Reeb orbits $\alpha_i$. Next, $c_\tau(\alpha)$ is the relative first Chern class, with respect to $\tau$, of $\xi$ restricted to a surface bounded by $\alpha$. That is, if $\Sigma$ is a compact oriented surface with boundary and $g:\Sigma\to\partial X_\Omega$ is a smooth map such that $g(\partial\Sigma)=\sum_im_i\alpha_i$, then $c_\tau(\alpha)$ is the algebraic count of zeroes of a section of $g^*\xi$ which on each boundary circle is nonvanishing and has winding number zero with respect to $\tau$. The relative first Chern class is additive in the sense that
\[
c_\tau(\alpha) = \sum_im_ic_\tau(\alpha_i).
\]
Next, $Q_\tau(\alpha)$ is the relative self-intersection number; in the present situation this is given by
\begin{equation}
\label{eqn:Qtau}
Q_\tau(\alpha) = \sum_im_i^2Q_\tau(\alpha_i) + \sum_{i\neq j}m_im_j\op{link}(\alpha_i,\alpha_j).
\end{equation}
Here $Q_\tau(\alpha_i)$ is the linking number of $\alpha_i$ with a pushoff of itself via the trivialization $\tau$, and $\op{link}(\alpha_i,\alpha_j)$ denotes the linking number of $\alpha_i$ and $\alpha_j$. Finally,
\[
CZ_\tau^I(\alpha)=\sum_i\sum_{k=1}^{m_i}CZ_\tau(\alpha_i^k)
\]
where $CZ_\tau(\alpha_i^k)$ denotes the Conley-Zehnder index of the $k$-fold iterate of $\alpha_i$ with respect to the trivialization $\tau$. In particular, if $\gamma$ is an elliptic orbit such that the linearized Reeb flow around $\gamma$ with respect to the trivialization $\tau$ is conjugate to a rotation by $2\pi\theta$ for $\theta\in\R/\Q$, then
\[
CZ_\tau(\gamma^k)=2\floor{k\theta}+1.
\]

{\em Step 4.\/} We now calculate the terms that enter into the grading formula \eqref{eqn:Ialpha} when $\alpha$ is a generator of $ECC(\partial X_\Omega,\lambda)$ with $\mc{A}(\alpha)<1/\epsilon$.

We first choose a trivialization $\tau$ of $\xi$ over each embedded Reeb orbit of action less than $1/\epsilon$. There is a distinguished trivialization $\tau$ of $\xi$ over $\gamma_1$ determined by the disk in the plane $z_2=0$ bounded by $\gamma_1$. With respect to this trivialization, the linearized Reeb flow around $\gamma$ is rotation by $-2\pi/f'(a)$, so that
\begin{equation}
\label{eqn:cz1}
CZ_\tau(\gamma_1^k)=2\floor{-k/f'(a)}+1.
\end{equation}
Likewise, there is a distinguished trivialization $\tau$ of $\xi$ over $\gamma_2$ determined by the disk in the plane $z_1=0$ bounded by $\gamma_2$. With respect to this trivialization, we have
\begin{equation}
\label{eqn:cz2}
CZ_\tau(\gamma_2^k)=2\floor{-kf'(0)}+1.
\end{equation}
We also have
\[
c_\tau(\gamma_i)=1, \quad\quad Q_\tau(\gamma_i)=0
\]
for $i=1,2$.

We can choose the trivialization $\tau$ over the orbits $e_v$ and $h_v$ coming from the Morse-Bott circles so that the linearized Reeb flow around $e_v$ is a slight negative rotation, and the linearized Reeb flow around $h_v$ does not rotate the eigenspaces of the linearized return map. This implies that
\begin{equation}
\label{eqn:czeh}
CZ_\tau(e_v^k)=-1, \quad\quad CZ_\tau(h_v)=0
\end{equation}
whenever $k$ is sufficiently small that $e_v^k$ has action less than $1/\epsilon$.
We also have
\begin{gather*}
c_\tau(e_v)=c_\tau(h_v)=v_1-v_2,\\
Q_\tau(e_v)=Q_\tau(h_v)=\op{link}(e_v,h_v)=-v_1v_2.
\end{gather*}
Finally, the linking numbers of pairs of distinct embedded Reeb orbits are given as follows. Below, $o_v$ denotes either $e_v$ or $h_v$.
\[
\begin{split}
\op{link}(\gamma_1,\gamma_2) &= 1,\\
\op{link}(\gamma_1,o_v) &= -v_2,\\
\op{link}(\gamma_2,o_v) &= v_1,\\
\op{link}(o_v,o_w) &= \min(-v_1w_2,-v_2w_1).
\end{split}
\]

{\em Step 5.\/}
Let $\alpha$ and $\widetilde{\Lambda}$ be as in Step 2; we compute the grading $I(\alpha)$ in terms of $\widetilde{\Lambda}=(\Lambda,\rho,m,n)$.

As in \S\ref{sec:comb}, let $(0,B)$ and $(A,0)$ denote the endpoints of $\Lambda$. The Chern class calculations in Step 4 then imply that
\begin{equation}
\label{eqn:ctaualpha}
c_\tau(\alpha) = A+B+m+n.
\end{equation}

Next, let $\Lambda_{m,n}'$ be defined like $\Lambda_{m,n}$ in \S\ref{sec:comb}, but with the first path replaced by a horizontal segement from $(0,B+n)$ to $(m,B+n)$, and with the third path replaced by a vertical segment from $(A+m,n)$ to $(A+m,0)$. Let $R_{m,n}'$ denote the region bounded by $\Lambda_{m,n}'$ and the axes. We then have
\[
Q_\tau(\alpha) = 2\op{Area}(R_{m,n}').
\]
This follows by expanding $Q_\tau(\alpha)$ using \eqref{eqn:Qtau} and the formulas in Step 4, and then interpreting the result as the area of $R_{m,n}'$ computed by appropriately dissecting it into right triangles and rectangles. Let $\mc{L}(\Lambda_{m,n}')$ denote the number of lattice points in $R_{m,n}'$, not including lattice points on the translate of $\Lambda$. Let $E$ denote the number of lattice points on $\Lambda$. By Pick's formula for the area of a lattice polygon, we have
\[
2\op{Area}(R_{m,n}') = 2\mc{L}(\Lambda_{m,n}') + E - 2m-2n-A-B-1.
\]
Let $e(\widetilde{\Lambda})$ denote the total multiplicity of all elliptic orbits in $\alpha$. Observe that
\[
E=e(\widetilde{\Lambda})+h(\widetilde{\Lambda})+1.
\]
Combining the above three equations, we obtain
\begin{equation}
\label{eqn:qalpha}
Q_\tau(\alpha) = 2\mc{L}(\Lambda_{m,n}') + e(\widetilde{\Lambda}) + h(\widetilde{\Lambda}) - 2m-2n-A-B.
\end{equation}

Finally, it follows from \eqref{eqn:cz1} and \eqref{eqn:cz2} that
\[
\sum_{k=1}^nCZ_\tau(\gamma_1^k) + \sum_{k=1}^mCZ_\tau(\gamma_2^k) = 2(\mc{L}(\Lambda_{m,n})-\mc{L}(\Lambda_{m,n}')) + m + n.
\]
By \eqref{eqn:czeh}, the sum of the remaining Conley-Zehnder terms in $CZ_\tau^I(\alpha)$ is $-e(\widetilde{\Lambda})$. Thus
\begin{equation}
\label{eqn:czalpha}
CZ_\tau^I(\alpha) = 2(\mc{L}(\Lambda_{m,n})-\mc{L}(\Lambda_{m,n}')) + m + n -e(\widetilde{\Lambda}).
\end{equation}

Adding equations \eqref{eqn:ctaualpha}, \eqref{eqn:qalpha}, and \eqref{eqn:czalpha} gives 
\[
I(\alpha) = 2\mc{L}(\Lambda_{m,n}) + h(\widetilde{\Lambda})
\]
as desired.
\end{proof}

\section{The union of an ellipsoid and a cylinder}
\label{sec:uec}

In this section we study symplectic embeddings into $Z(a,b,c)$, which is the union of the cylinder $Z(a)$ with the ellipsoid $E(b,c)$. In \S\ref{sec:oee} we give a generalization of Corollary~\ref{cor:ballcylinder}, and in \S\ref{sec:cbp} and \S\ref{sec:obp} we prove Proposition~\ref{prop:packing} and Theorem~\ref{thm:packing}.

\subsection{Optimal ellipsoid embeddings}
\label{sec:oee}

We now prove the following proposition which asserts that certain inclusions of an ellipsoid into the union of an ellipsoid and a cylinder are optimal. This is a generalization of Corollary~\ref{cor:ballcylinder}, which is the case $b=c$. 
\begin{proposition}
\label{prop:ellipsoidcylinder}
Let $a$ be a positive integer and let $b,c$ and $\lambda$ be positive real numbers. Assume $c>1$, $a\ge b/c$, and at least one of the following two conditions:
\begin{description}
\item{(i)} $a = \floor{b/c}+1$.
\item{(ii)}
$b \le \frac{c}{c-1}$.
\end{description}
Then there exists a symplectic embedding $E(a,1)\to Z(\lambda,\lambda b,\lambda c) $ if and only if $E(a,1) \subset Z(\lambda,\lambda b,\lambda c)$.
\end{proposition}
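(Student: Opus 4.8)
The plan is to prove this in the same spirit as Corollary~\ref{cor:ballcylinder}: one direction (``if $E(a,1)\subset Z(\lambda,\lambda b,\lambda c)$'') is trivial, so the content is to show that a symplectic embedding $E(a,1)\to Z(\lambda,\lambda b,\lambda c)$ forces the inclusion $E(a,1)\subset Z(\lambda,\lambda b,\lambda c)$. First I would identify the inclusion condition explicitly. Since $Z(\lambda,\lambda b,\lambda c) = Z(\lambda)\cup E(\lambda b,\lambda c)$ and $E(a,1)$ is a toric domain over the triangle with vertices $(0,0),(a,0),(0,1)$, one checks that $E(a,1)\subset Z(\lambda,\lambda b,\lambda c)$ is equivalent to an explicit linear inequality of the form $\lambda \ge g(a,b,c)$ for some elementary function $g$ (the worst point is where the hypotenuse of the $E(a,1)$ triangle meets the hypotenuse of the $E(\lambda b,\lambda c)$ triangle, or the cylinder height, whichever dominates). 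I would compute this inequality first and record it as $(\star)$.

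Next I would set up the obstruction from ECH capacities. By Monotonicity and Conformality, it suffices to exhibit a single index $k$ with $0 < c_k(E(a,1)) = c_k(Z(1,b,c))$; then $c_k(E(a,1)) \le c_k(Z(\lambda,\lambda b,\lambda c)) = \lambda\, c_k(Z(1,b,c))$ rearranges to exactly $(\star)$. The natural candidate is $k=a$, since $c_a(E(a,1)) = a$ by the Ellipsoid property (for $a$ a positive integer, $N(a,1)_a = a$, realized by $d_1 = 0$, $d_2 = a$ — one must double-check that no combination $j_1 a + j_2$ with $j_1+j_2 \le$ appropriate bound exceeds $a$ at index $a$; in fact $N(a,1)$ is $0,1,\dots,a-1,a,a,\dots$, and $N(a,1)_a = a$). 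So the real task is to compute $c_a(Z(1,b,c))$ and show it equals the value coming out of $(\star)$, namely $a + $ (something linear in $b,c$).

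To compute $c_a(Z(1,b,c))$, I would use Theorem~\ref{thm:path} together with an exhaustion argument exactly as in the proof of Proposition~\ref{prop:bc}: approximate $Z(1,b,c)$ from inside by rational concave toric domains $X_{\Omega_i}$ whose $\Omega_i$ is the quadrilateral-type region truncating the cylinder, apply Theorem~\ref{thm:weight} to get the weight expansion $w(\Omega_i) = (\text{weights of } E(b,c)) \cup (1,\dots,1)$ with enough $1$'s, and then optimize \eqref{eqn:weightcompute} at $k=a$ in the limit $i\to\infty$. Equivalently and perhaps more cleanly, apply Theorem~\ref{thm:path} directly to $Z(1,b,c)$: maximize $\ell_\Omega(\Lambda)$ over concave integral paths $\Lambda$ enclosing exactly $a$ lattice points, where $\Omega$ has upper boundary consisting of the segment of slope between $-c/b$ and the horizontal ray at height $1$. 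The claim will be that the optimal path is the one that goes once across the ``ellipsoid part'' with a short initial edge and then runs horizontally at height $1$ picking up $a-1$ more unit-area steps, giving $c_a(Z(1,b,c)) = c + (a-1)\cdot 1 + (\text{correction})$; I would verify this equals the right side of $(\star)$.

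The main obstacle is precisely the hypotheses (i) and (ii): they are exactly the conditions guaranteeing that this particular path (or equivalently, the weight choice $d=1$ on the big ball/ellipsoid and $d_i\in\{0,1\}$ on the unit pieces, as in the proof of Corollary~\ref{cor:ballcylinder}) is genuinely optimal at index $k=a$, i.e. that no path enclosing $a$ lattice points does better by wrapping the ``ellipsoid'' part more than once or by exploiting lattice points differently. Case (ii), $b\le c/(c-1)$, keeps the ellipsoid $E(b,c)$ ``thin enough'' that a single traversal is optimal — this is the direct generalization of the $b=c$, $b\le 2$ hypothesis. Case (i), $a = \floor{b/c}+1$, is the smallest $a$ for which $E(a,1)$ is even a candidate to be contained (since containment needs $a \ge b/c$), and there the count forces the path to be the minimal one. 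I would handle the optimization by a monotonicity/convexity argument: writing the objective at $k=a$ as a function of how many times we ``use'' the ellipsoid edge, show under (i) or (ii) that using it once is best. Once $c_a(Z(1,b,c))$ is pinned down and matched to $(\star)$, Remark~\ref{remark:optimal} finishes the ``only if'' direction and the proposition follows.
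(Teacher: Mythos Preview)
Your overall strategy is exactly the paper's: reduce to showing $c_a(E(a,1))=a$ and computing $c_a(Z(1,b,c))$ so that Monotonicity plus Conformality yields the inclusion inequality $(\star)$, which the paper records as $\lambda\ge \frac{ac}{ac+b(c-1)}$, equivalently $c_a(Z(1,b,c))=a+\frac{b(c-1)}{c}$. Two concrete points need fixing.

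First, the sentence ``it suffices to exhibit a single index $k$ with $0 < c_k(E(a,1)) = c_k(Z(1,b,c))$'' is wrong as written: equality of those two capacities would only give $\lambda\ge 1$. What you actually need (and what your next paragraph correctly targets) is that $c_a(E(a,1))/c_a(Z(1,b,c))$ equals the threshold in $(\star)$; the two capacities are \emph{not} equal.

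Second, and more substantively, your conjectured weight expansion $w(\Omega_i)=(\text{weights of }E(b,c))\cup(1,\ldots,1)$ is not correct, and carrying the computation through with it would fail. The cylinder and the ellipsoid do not decompose independently: for instance when $c\ge b$ the first step of the weight expansion of $Z(1,b,c)$ produces the ball $B\!\left(\frac{b(c-1)}{c}+1\right)$, not $B(b)$, together with $\Omega_2$ an ellipsoid and $\Omega_3=Z(1)$. The paper handles case~(ii) by first doing this decomposition when $c\ge b$ and optimizing via Disjoint Union, then reducing $b\ge c$ to this by peeling off $\lfloor b/c\rfloor$ copies of $B(c)$; case~(i) is handled by observing that the first $a$ steps of the weight expansion already give $a-1$ copies of $B(c)$ plus one more ball, so Lemma~\ref{lem:approximation} applies directly. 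Your alternative route via Theorem~\ref{thm:path} would also work, but the weight-expansion computation is what the paper does, and it is not the one you wrote down.
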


\begin{proof}
Using $c>1$ and $a\ge b/c$, we calculate that $E(a,1) \subset Z(\lambda,\lambda b,\lambda c))$ if and only if
\begin{equation}
\label{eqn:ellipsoidinclusion}
\lambda\ge \frac{ac}{ac+b(c-1)}.
\end{equation}
Consequently, as in the proof of Corollary~\ref{cor:ballcylinder}, Proposition~\ref{prop:ellipsoidcylinder} follows from the Ellipsoid axiom and Lemma~\ref{lem:capacitycomputation} below.
\end{proof}

\begin{lemma}
\label{lem:capacitycomputation}
Let $a$ be a positive integer and let $b$ and $c$ be positive real numbers with $c > 1$ and $a \ge b/c$.  Assume that (i) or (ii) in Proposition~\ref{prop:ellipsoidcylinder} holds.  Then 
\begin{equation}
\label{eqn:zbc}
c_a(Z(1,b,c)) = a + \frac{b(c-1)}{c}.
\end{equation}
\end{lemma}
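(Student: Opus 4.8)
The plan is to compute $c_a(Z(1,b,c))$ directly from Theorem~\ref{thm:path}, exploiting the fact that $Z(1,b,c)$ is a concave toric domain. First I would identify the domain: $Z(1,b,c) = X_\Omega$, where $\Omega$ is bounded by the axes, the segment from $(0,c)$ to $(c-b,b)$ (the slanted edge of the ellipsoid $E(b,c)$, which has slope $-c/b$), and then the horizontal ray to the right at height $y=b$ (coming from the cylinder $Z(1) = P(\infty,1)$). Actually I should be careful: since $a \ge b/c$ and $c>1$, the relevant truncation of the cylinder to a genuine concave toric domain, together with the exhaustion property \eqref{eqn:exhaustion}, lets me pass to $\Omega_i$ quadrilaterals as in the proof of Proposition~\ref{prop:bc}; alternatively one can argue directly with the unbounded $\Omega$ using that $\ell_\Omega(\Lambda)$ and $\mc{L}(\Lambda)$ only involve a bounded region for fixed lattice-point count. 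Either way, by Theorem~\ref{thm:path} we have $c_a(Z(1,b,c)) = \max\{\ell_\Omega(\Lambda) \mid \mc{L}(\Lambda) = a\}$ over concave integral paths $\Lambda$.

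Next I would produce the candidate maximizer. The natural guess, given that the answer is $a + b(c-1)/c$, is the path $\Lambda$ whose edges capture exactly $a$ lattice points: namely I expect $\Lambda$ to run along the line $y = 1$ (slope $0$) for a while and then pick up one slanted edge matching the ellipsoid's slope $-c/b$, or more simply the staircase path with $a$ unit horizontal steps at height $y=1$ together with one vertical drop — enclosing $a$ lattice points below it. For such a path, the horizontal edges $e$ have $v_e = (1,0)$ and one takes $p_e$ to be the top vertex $(0,c)$ of $\Omega$, contributing $v_e \times p_e = (1,0)\times(0,c) = c$ per unit... this overshoots, so more carefully the supporting line for a horizontal edge touches $\Omega$ at its highest point but the cross product with the \emph{left} endpoint direction gives the height, which is $c$ at $x=0$ but the edges sit at $x \ge $ something. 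I would instead compute $\ell_\Omega(\Lambda)$ edge by edge exactly as in Example~\ref{ex:checkell}: the contribution of the horizontal portion is the $y$-coordinate times horizontal displacement evaluated against the support point, and the contribution of the slanted edge (parallel to the ellipsoid face) is $v_e \times p_e$ with $p_e$ anywhere on that face, which is independent of the choice. Adding these and using $f$'s explicit piecewise-linear form should yield exactly $a + b(c-1)/c$; conditions (i) or (ii) are precisely what guarantee this particular path, rather than some other path with $\mc{L} = a$ (e.g.\ one that dips below $y=1$ to use the steeper part of $\partial\Omega$, or the pure ellipsoid path), is the maximizer.

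The hard part will be the optimality check: showing no other concave integral path $\Lambda$ with $\mc{L}(\Lambda) = a$ gives a larger $\Omega$-length. I expect to handle this by a case analysis on how $\Lambda$ interacts with the corner $(c-b,b)$ of $\Omega$ — i.e.\ how many of $\Lambda$'s edges are "steep" (slope more negative than $-c/b$, supported against the upper-left vertex region) versus "flat". For the flat edges the support point is effectively $(0,c)$-ish and for steep edges it is $(c,0)$-ish, mirroring the dichotomy in Example~\ref{ex:checkell}; the $a$ lattice points must then be distributed accordingly, and an elementary optimization over integer partitions shows the maximum is attained at the claimed path precisely under hypothesis (i) ($a$ just above $\lfloor b/c\rfloor$, so the flat regime dominates) or hypothesis (ii) ($b \le c/(c-1)$, equivalently $b(c-1)/c \le 1$, which bounds the "cost" of the slanted edge). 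This last inequality manipulation — translating (i) and (ii) into the statement that the flat-plus-one-slant path beats all competitors — is where the real content lies, and I would present it as a short lemma comparing $\ell_\Omega$ of the candidate to $\ell_\Omega$ of an arbitrary path via the lattice-point constraint. A cleaner alternative, which I would try first, is to invoke Theorem~\ref{thm:weight} instead: $Z(1,b,c)$ with $c/b$ rational is rational, its weight expansion starts with the ball $B(c)$ coming from $\Omega_1$ and continues with the weight expansion of the leftover ellipsoid-corner piece plus copies of the cylinder width $1$; then \eqref{eqn:weightcompute} reduces the problem to the explicit optimization $\max\{\sum a_i d_i \mid \sum (d_i^2+d_i)/2 \le a\}$, where one checks under (i) or (ii) that the optimum takes $d_1 = 1$ on the $B(c)$ factor and $d_i \in \{0,1\}$ on the unit factors — giving $c + (a - 1) \cdot 1 \cdot$ (adjusted) $= a + b(c-1)/c$ after accounting for the fractional leftover weight. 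I'd use whichever of these two routes produces the cleanest inequality bookkeeping, but I anticipate the weight-expansion route (à la Proposition~\ref{prop:bc}) is shortest.
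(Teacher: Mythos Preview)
Your weight-expansion route is the paper's approach, but two concrete errors would derail it as written. First, the domain is misidentified: since $Z(1,b,c) = Z(1) \cup E(b,c)$ and $Z(1) = P(\infty,1)$, the horizontal ray of $\partial\Omega$ is at height $1$ (not $b$), and the corner is at $\bigl(b(c-1)/c,\,1\bigr)$, not $(c-b,\,b)$. Second, and more substantively, the weight expansion does \emph{not} begin with $B(c)$ in general: that holds only when $b \ge c$, whereas case (ii) allows $c > b$ (e.g.\ any $b \le 1$), in which case the largest inscribed triangle has side $b(c-1)/c + 1 < c$. Your final tally ``$c + (a-1)\cdot 1$'' therefore equals $a + b(c-1)/c$ only when $b = c$.

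The paper fixes this by a case split. In case (ii) with $c \ge b$, one step of the decomposition gives $X_{\Omega_1} = B\bigl(b(c-1)/c + 1\bigr)$, $X_{\Omega_2}$ a small ellipsoid, and $X_{\Omega_3} = Z(1)$; using that $b(c-1)/c \le 1$ (equivalent to (ii)) one checks the Disjoint Union maximum is attained at $(k_1,k_2,k_3) = (1,0,a-1)$. In case (ii) with $b \ge c$, the first $\floor{b/c}$ steps do give copies of $B(c)$, and the leftover is $Z(1,\,b - c\floor{b/c},\,c)$, to which the previous subcase applies; here one also needs $c \le 2$ (again from (ii)) to rule out $k_1 > \floor{b/c}$. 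Case (i) is separate: then the first $a$ steps of the weight expansion already determine $c_a$, and Lemma~\ref{lem:approximation} gives the answer directly. Your path-based route via Theorem~\ref{thm:path} is a legitimate alternative, but it would require the same case analysis to locate the support points $p_e$ correctly and to rule out competing paths; as sketched, the $\ell_\Omega$ computation inherits the wrong-corner error.
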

\begin{proof}
The proof has three steps.

{\em Step 1.\/} We first prove equation \eqref{eqn:zbc} in case (ii) when $c\ge b$.

Referring back to the definition of the weight expansion in \S\ref{sec:weight}, we have
\[
\begin{split}
X_{\Omega_1} &= B\left(\frac{b(c-1)}{c}+1\right),\\
X_{\Omega_2} &= E\left(\frac{b(c-1)}{c},\frac{(c-b)(c-1)}{c}\right),\\
X_{\Omega_3} &= Z(1).
\end{split}
\]
By Theorem~\ref{thm:weight} and the Disjoint Union property of ECH capacities, we have
\[
c_a(Z(1,b,c)) = \max_{k_1+k_2+k_3=a} \sum_{i=1}^3c_{k_i}(X_{\Omega_i}).
\]
Now $c_{k_3}(X_{\Omega_3})=k_3$. Also, it follows from the Ellipsoid property that $c_k(E(\alpha,\beta))\le k\alpha$. Since we are assuming that $\frac{b(c-1)}{c}\le 1$, we deduce that $c_{k_2}(X_{\Omega_2})\le k_2$. Thus the maximum is achieved with $k_2=0$. Since $1<\frac{b(c-1)}{c}+1\le 2$, it follows as in \eqref{eqn:cab} that the maximum is achieved with $k_1=1$. Equation \eqref{eqn:zbc} follows.

{\em Step 2.\/} We now prove equation \eqref{eqn:zbc} in case (ii) when $b\ge c$.

Here, in the inductive definition of the weight expansion, the first $\floor{b/c}$ steps yield $\floor{b/c}$ copies of the ball $B(c)$. The remaining region is $Z(1, b-c\floor{b/c},c)$. Here, if $c$ divides $b$, then we regard $Z(1, b-c\floor{b/c},c)$ as $Z(1)$. Thus by Theorem~\ref{thm:weight} and the Disjoint Union property,
\begin{equation}
\label{eqn:max}
c_a(Z(1,b,c)) = \max_{k_1+k_2=a}\left(c_{k_1}\left(E\left(c,c\floor{b/c}\right)\right) + c_{k_2}\left(Z(1, b-c\floor{b/c},c\right))\right).
\end{equation}
Step 1 applies to show that
\begin{equation}
\label{eqn:ck2}
c_{k_2}\left(Z(1, b-c\floor{b/c},c\right)) = k_2 + \frac{(b-c\floor{b/c})(c-1)}{c}
\end{equation}
whenever $k_2\ge 1$. Also, it follows from the Ellipsoid property that
\begin{equation}
\label{eqn:ck1}
c_{k_1}(E(c,c\floor{b/c})) = ck_1
\end{equation}
for $k_1\le \floor{b/c}$. For larger values of $k_1$, one has to increase $k_1$ by at least $\floor{b/c}\ge 2$ to obtain any increase in $c_{k_1}(E(c,c\floor{b/c}))$, and this increase will always equal $c$. On the other hand, it follows from $b\ge c$ and (ii) that $c\le 2$. Hence the maximum is attained for $k_1\le \floor{b/c}$. Since $a\ge\floor{b/c}$ and $c>1$, the maximum is attained with $k_1=\floor{b/c}$. Adding \eqref{eqn:ck2} and \eqref{eqn:ck1} then proves \eqref{eqn:zbc}.

{\em Step 3.\/} We now prove equation \eqref{eqn:zbc} in case (i). As in Step 2, the first $a$ steps of the weight expansion yield $a-1$ copies of the ball $B(c)$, together with the ball $B\left(\frac{(b-c(a-1))(c-1)}{c}+1\right)$. It follows from Lemma~\ref{lem:approximation} that 
\[
c_a(Z(1,b,c)) = (a-1)c + \frac{(b-c(a-1))(c-1)}{c}+1.
\]
Simplifying this expression gives equation \eqref{eqn:zbc} again.
\end{proof}

\begin{remark}
\label{rmk:alebc}
If $c > 1$ and $a$ is a positive integer with $a\le b/c$, then
\begin{equation}
\label{eqn:alebc}
c_a(Z(1,b,c))=ac.
\end{equation}
This is because the first $a$ steps in the weight expansion yield $a$ copies of the ball $B(c)$, and we can then apply Lemma~\ref{lem:approximation}.
\end{remark}

\subsection{Construction of ball packings}
\label{sec:cbp}

\begin{proof}[Proof of Proposition~\ref{prop:packing}] 
The proof has three steps.

{\em Step 1.}  Choose $k\in\{1,\ldots,n\}$ maximizing $\lambda_k$.  We claim that
$\lambda_{k}\geq w_{i}$
for all $i>k$.

To see this, use \eqref{eqn:lambdak} to compute that
\[
\lambda_{k}-w_{k+1}  =\left(k+\frac{b(c-1)}{c}+1\right)\left(\lambda_{k}-\lambda_{k+1}\right).
\]
Since $\lambda_k$ is maximal, we deduce that $\lambda_k\ge w_{k+1}$. The rest follows from the fact that $w_1\ge\cdots\ge w_n$.

{\em Step 2.\/} Let $\Omega$ be the region for which $X_\Omega = Z(\lambda,\lambda b,\lambda c)$. That is, $\Omega$ is bounded by the axes, the line segment from $(0,\lambda c)$ to $\left(\frac{b}{c}(c-1)\lambda,\lambda\right)$, and the horizontal ray extending to the right from the latter point. By Lemma~\ref{lem:traynortrick}, it suffices to embed disjoint open triangles $T_1,\ldots,T_n$ into $\Omega$, such that $T_\ell$ is affine equivalent to $\bigtriangleup(w_\ell)$ for each $\ell$. If $\ell>k$, then by Step 1 we have $\lambda\ge w_\ell$, so we can simply take $T_\ell$ to be a translate of $\bigtriangleup(w_\ell)$ sufficiently far to the right.

{\em Step 3.\/} For $1\le \ell\le k$, we now define the triangle $T_\ell$ by starting with the triangle $\bigtriangleup(w_\ell)$, multiplying by $\begin{pmatrix} 1 & -(\ell-1) \\ 0 & 1 \end{pmatrix}\in SL_2\Z$, and then translating to the right by $\sum_{i=1}^{\ell-1}w_i$. The vertices of $T_\ell$ are
\[
\left(\sum_{i=1}^{\ell-1}w_{i},0\right),\;\left(\sum_{i=1}^{\ell}w_{i},0\right),\;\textrm{and}\;\left(\sum_{i=1}^{\ell-1}w_{i}-(\ell-1)w_{\ell},w_{\ell}\right).
\]
Observe that the right edge of $T_\ell$ has slope $-1/\ell$; and if $\ell>1$ then the left edge of $T_\ell$ is a subset of the right edge of $T_{\ell-1}$. In particular, the triangles $T_1,\ldots,T_k$ are disjoint; and the upper boundary of the union of their closures, call this path $\Lambda$, is the graph of a convex function.

To verify that the triangles $T_1,\ldots,T_k$ are contained in $\Omega$, we need to check that the path $\Lambda$ does not go above the upper boundary of $\Omega$, see Figure~\ref{fig: the embedding into Z}. The initial endpoint of $\Lambda$ is $(0,w_1)$, which is not above the upper boundary of $\Omega$ by our assumption that $\lambda \ge w_1/c$. Next, $\Lambda$ crosses the horizontal line of height $\lambda$ at the point $\left(\sum_{\ell=1}^k (w_\ell-\lambda),\lambda\right)$. By convexity, it is enough to check that this point is not to the right of the corner $\left(\frac{b}{c}(c-1)\lambda,\lambda\right)$ of $\partial\Omega$.
This holds because
\[
\lambda\ge\lambda_{k}=\frac{\sum_{\ell=1}^{k}w_{\ell}}{k+\frac{b}{c}(c-1)}
\]
implies that
\[
\sum_{\ell=1}^{k}\left(w_{\ell}-\lambda\right)\le\frac{b}{c}(c-1)\lambda.
\]

\begin{figure}
\begin{center}
\scalebox{0.35}{\includegraphics{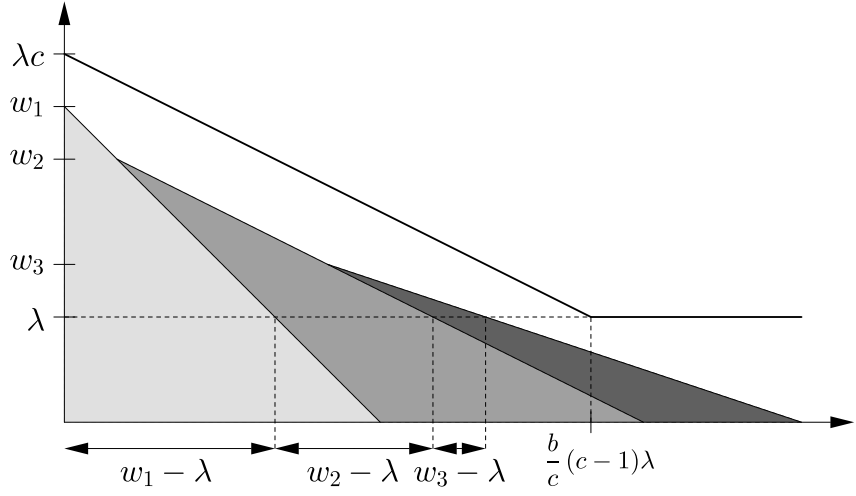}}
\end{center}
\caption{The union of the triangles $T_{\ell}$ is a subset of $\Omega$}
\label{fig: the embedding into Z}
\end{figure}

\end{proof}

\subsection{The ECH obstruction to ball packings}
\label{sec:obp}

We now complete the proof of Theorem~\ref{thm:packing}. By Proposition~\ref{prop:packing}, it is enough to prove:

\begin{lemma}
Under the assumptions of Theorem~\ref{thm:packing}, if there exists a symplectic embedding
\[
\coprod_{i=1}^{n} \op{int}(B\left(w_{i}\right)) \to Z(\lambda,\lambda b,\lambda c),
\]
then $\lambda\ge \max\{w_1/c,\lambda_1,\ldots,\lambda_n\}$.
\end{lemma}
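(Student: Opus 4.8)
The plan is to derive the inequality purely from the Monotonicity, Conformality and Disjoint Union properties of ECH capacities, together with the capacity computations for $Z(1,b,c)$ already established in Lemma~\ref{lem:capacitycomputation} and Remark~\ref{rmk:alebc}. First I would observe that, since $Z(\lambda,\lambda b,\lambda c) = X_{\lambda\Omega}$ where $Z(1,b,c) = X_\Omega$, the argument of Lemma~\ref{lem:continuity1} (scaling $\Omega$ corresponds to scaling the symplectic form, so the Conformality property applies) gives $c_k(Z(\lambda,\lambda b,\lambda c)) = \lambda\,c_k(Z(1,b,c))$ for every $k$. Combining this with the Monotonicity property applied to the assumed embedding, the fact from \eqref{eqn:Xminus} that $c_k$ of a ball agrees with $c_k$ of its interior, and the Disjoint Union property, one gets
\[
c_k\left(\coprod_{i=1}^n B(w_i)\right) \le \lambda\,c_k(Z(1,b,c))
\]
for all $k$.

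Next I would estimate the two sides for $k \in \{1,\ldots,n\}$. For the left side, taking $d_1 = \cdots = d_k = 1$ and $d_{k+1} = \cdots = d_n = 0$ in \eqref{eqn:weightcompute} gives $c_k(\coprod_i B(w_i)) \ge \sum_{i=1}^k w_i$. For the right side I would split into two cases. If $k \ge b/c$, then the hypothesis $b \le \frac{c}{c-1}$ is exactly condition (ii) of Proposition~\ref{prop:ellipsoidcylinder}, so Lemma~\ref{lem:capacitycomputation} applies with $a = k$ and yields $c_k(Z(1,b,c)) = k + \frac{b(c-1)}{c}$; substituting into the displayed inequality gives $\lambda \ge \frac{\sum_{i=1}^k w_i}{k + b(c-1)/c} = \lambda_k$. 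If instead $k < b/c$, then Remark~\ref{rmk:alebc} gives $c_k(Z(1,b,c)) = kc$, hence $\lambda \ge \frac{\sum_{i=1}^k w_i}{kc}$; since $c > 1$ and $k \le b/c$ imply $kc \le k + \frac{b(c-1)}{c}$, this bound is at least $\lambda_k$, so again $\lambda \ge \lambda_k$. This establishes $\lambda \ge \lambda_k$ for all $k \in \{1,\ldots,n\}$.

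It remains to handle the term $w_1/c$, which I would extract from the $k=1$ case. If $b \le c$ we are in the first case and $\lambda \ge \lambda_1 = \frac{w_1}{1 + b(c-1)/c} \ge \frac{w_1}{c}$, the last inequality using $b \le c$ and $c > 1$. If $b > c$ we are in the second case, which directly gives $\lambda \ge \frac{w_1}{c}$. Either way $\lambda \ge w_1/c$, so altogether $\lambda \ge \max\{w_1/c,\lambda_1,\ldots,\lambda_n\}$.

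I do not expect a serious obstacle: the essential content is the capacity computation $c_a(Z(1,b,c)) = a + \frac{b(c-1)}{c}$, which is already proved in Lemma~\ref{lem:capacitycomputation}. The only points requiring a little care are (a) that an embedding of \emph{open} balls still constrains capacities, handled by the equality of $c_k$ for a star-shaped domain and its interior, and (b) the case split at $k = b/c$ together with the elementary inequality $kc \le k + b(c-1)/c$ for $k \le b/c$, which is the one routine computation needed.
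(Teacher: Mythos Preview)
Your proposal is correct and uses the same essential ingredients as the paper's proof: the Monotonicity/Conformality/Disjoint Union axioms together with the computations of $c_k(Z(1,b,c))$ from Lemma~\ref{lem:capacitycomputation} and Remark~\ref{rmk:alebc}, and the elementary inequality $kc \le k + b(c-1)/c$ for $k \le b/c$.

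The organization differs slightly. The paper fixes $M=\max\{w_1/c,\lambda_1,\ldots,\lambda_n\}$, splits into the cases $b\le c$ and $b\ge c$, and in each case exhibits a single index $k$ (either $k=1$ or the index maximizing $\lambda_k$) for which $\sum_{i=1}^k w_i \ge M\cdot c_k(Z(1,b,c))$. You instead run the argument for \emph{every} $k\in\{1,\ldots,n\}$, splitting on whether $k\ge b/c$ or $k<b/c$, and deduce $\lambda\ge\lambda_k$ in each case; the bound $\lambda\ge w_1/c$ then falls out of the $k=1$ instance. Your route is arguably more direct, since it avoids singling out the maximizing index, but the two arguments are really reorderings of the same computation.
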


\begin{proof}
By the Monotonicity and Conformality properties of ECH capacities, it is enough to show that there is a positive integer $k$ such that 
\begin{equation}
\label{eqn:obstruction}
c_k\left(\coprod_{i=1}^{n} \op{int}(B\left(w_{i}\right))\right) \ge \max\{w_1/c,\lambda_1,\ldots,\lambda_n\}\cdot c_k(Z(1,b,c)).
\end{equation}  
By the Disjoint Union axiom, if $1\le k\le n$ then
\[
c_k\left(\coprod_{i=1}^{n} \op{int}(B\left(w_{i}\right))\right) \ge \sum_{i=1}^{k} w_i.
\]
So to prove \eqref{eqn:obstruction}, it is enough to show that there exists $k\in\{1,\ldots,n\}$ with
\begin{equation}
\label{eqn:obstruction2}
\sum_{i=1}^kw_i \ge \max\{w_1/c,\lambda_1,\ldots,\lambda_n\}\cdot c_k(Z(1,b,c)).
\end{equation}
We will prove this by considering two cases.

{\em Case 1.}  Assume that $b \le c$.  Then $w_1/c \le \lambda_1$.  Hence
\begin{equation}
\label{eqn:maxlambdak}
\max\left\{ w_{1}/c,\lambda_{1},\ldots,\lambda_{n}\right\}=\max\{\lambda_1,\ldots,\lambda_n\}.
\end{equation}
We claim now that \eqref{eqn:obstruction2} holds for $k\in\{1,\ldots,n\}$ maximizing $\lambda_k$. To prove this, we need to show that
\[
\sum_{i=1}^kw_i \ge \lambda_k c_k(Z(1,b,c)).
\]
By equation \eqref{eqn:lambdak}, the above inequality is equivalent to
\begin{equation}
\label{eqn:obstruction3}
c_k(Z(1,b,c)) \le k + \frac{b(c-1)}{c}.
\end{equation}
Since
 $b/c \le 1$, it follows from Lemma~\ref{lem:capacitycomputation} that equality holds in \eqref{eqn:obstruction3}.

{\em Case 2.}  Assume that $b \ge c$.  By Corollary~\ref{cor:gromovwidth}, we have
\[
c_1(Z(1,b,c))=c.
\]
Consequently, we can assume without loss of generality that \eqref{eqn:maxlambdak} holds, since otherwise the inequality \eqref{eqn:obstruction2} holds for $k=1$. 
As in Step 1, it is now enough to prove the inequality \eqref{eqn:obstruction3}, where $k\in\{1,\ldots,n\}$ maximizes $\lambda_k$.

If $k \ge b/c$, then equality holds in \eqref{eqn:obstruction3} by Lemma~\ref{lem:capacitycomputation}.  If $k < b/c$, then the inequality \eqref{eqn:obstruction3} follows from Remark~\ref{rmk:alebc}, since in this case
\[
kc < k + \frac{b(c-1)}{c}.
\]
\end{proof}

\end{document}